   \theoremstyle{plain}
   \newtheorem{thm}{Theorem}[section]
   \newtheorem{prop}[thm]{Proposition}
   \newtheorem{lemma}[thm]{Lemma}  
   \newtheorem{cor}[thm]{Corollary}
   \theoremstyle{defn}
    \newtheorem{defn}[thm]{Definition}
   \theoremstyle{remark}
   \newtheorem{remark}[thm]{Remark}
   \newtheorem{example}[thm]{Example}
 \newcommand{\supp}{\operatorname{supp}}
\newcommand{\G}{{\mathcal{G}}}
\newcommand{\CcG}{{C_{c}(\mathcal{G})}}
\newcommand{\Gu}{{\mathcal{G}^{(0)}}}
\newcommand{\CcGu}{{C_{c}(\mathcal{G}^{(0)})}}
\newcommand{\A}{{\mathcal{A}}}
   \numberwithin{equation}{section}
\author{Johannes Christensen}
\email{johannes@math.au.dk}
\address{Institut for Matematik, Aarhus University, Ny Munkegade, 8000 Aarhus C, Denmark}
\title{The structure of KMS weights on \'etale groupoid $C^{*}$-algebras}
\begin{document}
\maketitle

\begin{abstract}
We generalise a number of classical results from the theory of KMS states to KMS weights in the setting of $C^{*}$-dynamical systems arising from a continuous groupoid homomorphism $c:\mathcal{G} \to \mathbb{R}$ on a locally compact second countable Hausdorff \'etale groupoid $\mathcal{G}$. In particular, we generalise Neshveyev's Theorem to KMS weights.  
\end{abstract}

\bigskip
{\hspace{\parindent}  \small
\emph{ \ 2010 Mathematics Subject Classification}: 46L05, 	46L30  , 37A55
}
\bigskip

\section{Introduction}
Over the past 50 years the theory of KMS weights and KMS states has come to play a significant role in operator algebras. To name a few examples, it appears as a key ingredient in Tomita-Takesaki theory \cite{Tak}, quantum groups \cite{KV} and the modeling of quantum statistical mechanical systems via $C^{*}$-algebras \cite{BR}. Furthermore, ignited by the work of Bost and Connes \cite{BC}, it has become a trend in recent years to give concrete descriptions of KMS states for different examples of $C^{*}$-dynamical systems. All this activity has resulted in a wealth of general results, in particular for KMS states on unital $C^{*}$-algebras. 

For a unital $C^{*}$-algebra all KMS weights are on the form $\lambda \omega$ with $\lambda >0$ and $\omega$ a KMS state, so studying KMS states and KMS weights is essentially the same thing. However, for non-unital $C^{*}$-algebras the two notions do not coincide. A classical textbook analogy for this, see e.g. \cite{KR}, is that considering weights instead of states corresponds to considering regular Borel measures instead of Borel probability measures on a locally compact second countable Hausdorff space. When the space is compact, corresponding to a unital $C^{*}$-algebra, the regular measures are just scalings of Borel probability measures, and hence a description of the Borel probability measures gives a description of all regular Borel measures. For non-compact spaces, corresponding to non-unital $C^{*}$-algebras, it is not in general true that all regular Borel measures are scalings of Borel probability measures, and in this case it is worth investigating the infinite measures as well. In the same manner, KMS weights are a much more appropriate invariant for non-unital $C^{*}$-dynamical systems than KMS states, see e.g. \cite{T2} for an example of this.

With this in mind, the purpose of this paper is to extend some important results from KMS states to KMS weights in the setup outlined in the abstract. In particular, we are going to
\begin{itemize}
\item embed the set of $\beta$-KMS weights into a locally convex topological vector space, and prove that the set of $\beta$-KMS weights has many of the same properties as the set of $\beta$-KMS states c.f. Theorem 5.3.30 in \cite{BR},
\item generalise a highly important theorem by Neshveyev to KMS weights, and
\item generalise a refinement of the theorem by Neshveyev presented in \cite{C2} to KMS weights.
\end{itemize}
The theorem by Neshveyev we are going to generalise is Theorem 1.3 in \cite{N}, which we henceforth will call Neshveyev's Theorem.

There are two key technical results that allow us to accomplish the above mentioned goals. The first technical result is Proposition \ref{prop31} in Section \ref{section3a}, where we prove that all KMS weights for a general $C^{*}$-dynamical system is finite on a certain subset of the Pedersen ideal. This result allows us to study the set of KMS weights in a very general setup in Section \ref{section5}. For $C^{*}$-dynamical systems arising from an \'etale groupoid $\mathcal{G}$ and a continuous groupoid homomorphism our Proposition \ref{prop31} furthermore implies that any KMS weight is finite on $C_{c}(\mathcal{G})$, which we use to generalise Neshveyev's Theorem to weights in Section \ref{section4}. It has previously been attempted to generalise Neshveyev's Theorem to weights \cite{CT5, T3}, but without the information from Proposition \ref{prop31} it has not been possible to prove the theorem in full generality. 

The second technical result is Theorem \ref{t35} in Section \ref{section3}. In Section \ref{section3} we analyse quasi-invariant regular Borel measures on \'etale groupoids, and in Theorem \ref{t35} we prove that the extremal measures in the convex set of such quasi-invariant measures are exactly the ergodic measures. This observation allows us to use ideas from ergodic theory to study KMS weights, and we use it in Section \ref{section6} to extend the main theorem of \cite{C2} to KMS weights.



\smallskip

   \emph{Acknowledgement:} This work was supported by the DFF-Research Project 2 \emph{Automorphisms and Invariants of Operator Algebras}, no. 7014-00145B. Some results in this paper appeared in the authors PhD-thesis, and he thanks Jean Renault, Sergey Neshveyev and Klaus Thomsen for discussions on these.

\section{Background} \label{section2}
We will in the following summarise our notation and some basic theory on $C^{*}$-dynamical systems, groupoid $C^{*}$-algebras and KMS weights. For a more comprehensive treatment of \'etale groupoid $C^{*}$-algebras we refer to \cite{Re,Sims}, for $C^{*}$-dynamical systems we refer to \cite{K2} and for KMS weights and their GNS representation we refer to the treatments in \cite{K, KV, KV2}.

\subsection{$C^{*}$-dynamical systems}
Assume that ($\mathcal{A}$, $\alpha$) is a $C^{*}$-dynamical system over $\mathbb{R}$, i.e. $\alpha=\{\alpha_{t}\}_{t \in \mathbb{R}}$ is a continuous $1$-parameter group on the $C^{*}$-algebra $\mathcal{A}$, c.f. Definition 2.7.1 in \cite{BR}. To introduce the analytic continuations of $(\mathcal{A}, \alpha)$ we want to define a map $\alpha_{z}$ for each $z\in \mathbb{C}$ . To define its domain we let $S(z)$ denote the set of complex numbers with imaginary part between $0$ and $\text{Im}(z)$, i.e. if $\text{Im}(z)\geq 0$ then
\begin{equation*}
S(z)=\{y\in \mathbb{C} \ : \ \text{Im}(y) \in [0, \text{Im}(z)] \}
\end{equation*}
and if $\text{Im}(z)\leq 0$ then $S(z)$ is defined by requiring $\text{Im}(y) \in [\text{Im}(z), 0] $. We let $S(z)^{0}$ denote the interior of $S(z)$. The domain of $\alpha_{z}$, denoted $\text{D}(\alpha_{z})$, consist of the elements $a\in \mathcal{A}$ such that there exists a continuous function $f: S(z) \to \mathcal{A}$ that is analytic on $S(z)^{0}$ and satisfies $f(t)=\alpha_{t}(a)$ for all $t\in \mathbb{R}$. For such $a\in \text{D}(\alpha_{z})$ we define $\alpha_{z}(a)=f(z)$. We call an element $a\in \mathcal{A}$ \emph{analytic for $\alpha$} if there exists an analytic function $f: \mathbb{C} \to \mathcal{A}$ with $f(t)=\alpha_{t}(a)$ for all $t\in \mathbb{R}$, and it then follows that $a\in D(\alpha_{z})$ for each $z\in \mathbb{C}$. The operator $\alpha_{z}$ is multiplicative in the sense that when $x,y\in D(\alpha_{z})$ then $xy\in D(\alpha_{z})$ and $\alpha_{z}(xy)=\alpha_{z}(x)\alpha_{z}(y)$. 

Not all elements of $\mathcal{A}$ are analytic in general, so it is crucial that we can associate a lot of analytic elements to each $a\in \mathcal{A}$, namely for each $n\in \mathbb{N}$ the element $a(n)\in \mathcal{A}$ given by
\begin{equation} \label{esmear}
a(n) := \frac{n}{\sqrt{\pi}} \int_{\mathbb{R}} e^{-n^{2}t^{2}} \alpha_{t}(a) \ d t 
\end{equation}
is analytic, see \cite{K2} for this. These elements satisfies that $\lVert a(n)\rVert \leq \lVert a \rVert$ for all $n\in \mathbb{N}$ and that $a(n)\to a$ in norm for $n\to \infty$.

\subsection{Groupoid $C^{*}$-algebras} \label{sec22}
When $\G$ is a groupoid we define its range map $r: \mathcal{G}\to \mathcal{G}$ and its source map $s: \mathcal{G}\to \mathcal{G}$ by $r(g)=gg^{-1}$ and $s(g)=g^{-1}g$ for $g\in \mathcal{G}$. The common image of $r$ and $s$ is the \emph{unit space}, which we denote $\mathcal{G}^{(0)}$. For $x\in \mathcal{G}^{(0)}$ we set $\mathcal{G}_{x}=s^{-1}(x)$ and $\mathcal{G}^{x}=r^{-1}(x)$, and denote by $\mathcal{G}_{x}^{x}:=\mathcal{G}^{x}\cap \mathcal{G}_{x}$ the isotropy group at $x$. We will throughout the paper consider locally compact second countable Hausdorff groupoids $\mathcal{G}$ that are \'etale, meaning that $r$ and $s$ are local homeomorphisms. To ease terminology we will simply call such groupoids \'etale, so {\bf throughout this paper an \'etale groupoid is assumed to be locally compact, second countable and Hausdorff}. An open subset $W$ of an \'etale groupoid $\mathcal{G}$ will be called a bisection if both $r(W)$ and $s(W)$ are open, and $r|_{W}: W \to r(W)$ and $s|_{W}: W \to s(W)$ are homeomorphisms. Since we will work with infinite measures in this paper, it is not necessarily true that a measure restricted to a bisection $W$, or restricted to $r(W)$ or $s(W)$, is finite. Since this complicates some arguments, we will introduce the notion of a \emph{small bisection}, which will be an open bisection $W\subseteq \mathcal{G}$ with $\overline{W}$ compact such that there is another bisection $U\subseteq \mathcal{G}$ with $W \subseteq \overline{W} \subseteq U$. Since we will work with regular measures, all our measures are finite on small bisections.

For an \'etale groupoid $\G$ we can define the product $f_{1} * f_{2}$ of two functions $f_{1}, f_{2} \in C_{c}(\mathcal{G})$ via the formula
\begin{equation} \label{eprod}
(f_{1}*f_{2})(g) = \sum_{h \in \G^{r(g)}} f_{1}(h) f_{2}(h^{-1}g)
\end{equation}
for any $g\in \G$, and we can define the involution of any $f\in \CcG$ by $f^{*}(g)=\overline{f(g^{-1})}$ for $g\in \G$. With this product and involution $\CcG$ is a $*$-algebra. In this paper we will consider the completion of $\CcG$ with two different norms, the full norm $\lVert\cdot \rVert$ and the reduced norm $\lVert\cdot \rVert_{r}$, which give rise to respectively the full groupoid $C^{*}$-algebra $C^{*}(\mathcal{G})$ and the reduced groupoid $C^{*}$-algebra $C^{*}_{r}(\mathcal{G})$, see \cite{Re} for more details. The full norm on $\CcG$ is given by the formula
\begin{equation*}
\lVert f \rVert := \sup \{\lVert \pi(f) \rVert \ : \ \pi \text{ is a $* $-representation of } \CcG \}
\end{equation*}
for $f\in \CcG$, and for the definition of the reduced norm $\lVert \cdot \rVert_{r}$ we refer to \cite{Sims}, and here only remark that for $f\in \CcG$ we have the inequalities $\lVert f\rVert_{\infty} \leq \lVert f\rVert_{r} \leq  \lVert f\rVert$, where $\lVert \cdot \rVert_{\infty}$ denotes the sup-norm. When $f\in \CcG$ is supported on a bisection these inequalities become equalities, i.e. $\lVert f \rVert=\lVert f \rVert_{r}=\lVert f \rVert_{\infty}$. From this it follows that the restriction map $C_{c}(\mathcal{G})\to C_{c}(\mathcal{G}^{(0)})$ extends to a conditional expectation $P:C^{*}(\mathcal{G})\to C_{0}(\mathcal{G}^{(0)})$. When $\mathcal{H}$ is an open sub-groupoid of an \'etale groupoid $\mathcal{G}$ it follows from the definition of the full norm that the map $\iota: C_{c}(\mathcal{H})\to \CcG$ that extends functions by $0$ is an injective $*$-homomorphism which extends to a $*$-homomorphism $\iota: C^{*}(\mathcal{H})\to C^{*}(\G)$.

When $A$ is a locally compact abelian group we call a map $\Phi: \G\to A$ a \emph{continuous groupoid homomorphism} when it is continuous and $\Phi(gh)=\Phi(g)\Phi(h)$ whenever $g, h \in \G$ can be composed\footnote{These maps are also referred to as continuous $1$-cocycles in the literature}. If $A$ is a locally compact abelian group with dual group $\widehat{A}$ then a continuous groupoid homomorphism $\Phi: \G\to A$ gives rise to a $C^{*}$-dynamical system $(C^{*}(\G), \widehat{A}, \alpha)$ satisfying for $\xi \in \widehat{A}$ that
\begin{equation} \label{e1par}
\alpha_{\xi}(f)(g)=\xi(\Phi(g)) f(g) \quad \text{ for } f\in \CcG \text{ and } g\in \G
\end{equation}
see e.g. Proposition II.5.1 in \cite{Re}. When $A=\mathbb{R}$ we will denote a continuous groupoid homomorphisms by $c$, and we call the continuous $1$-parameter group it gives rise to diagonal, and denote it by $\alpha^{c}$. The $1$-parameter group $\alpha^{c}=\{\alpha^{c}_{t}\}_{t\in \mathbb{R}}$ then satisfies
\begin{equation*} 
\alpha^{c}_{t}(f)(g)=e^{itc(g)} f(g) \quad \text{ for } f\in \CcG \ , \  g\in \G  \text{ and } t\in \mathbb{R},
\end{equation*}
and the same formula defines a continuous $1$-parameter group on $C^{*}_{r}(\mathcal{G})$.

\subsection{KMS weights}
The theory of KMS weights on $C^{*}$-algebras was introduced by Combes \cite{Combes}, but in this paper we will follow the presentation in \cite{K, KV, KV2}, which are a great place for the reader to find a more rigorous treatment of the subject than the one presented here.

For a $C^{*}$-algebra $\mathcal{A}$ we let $\mathcal{A}_{+}$ denote the convex cone of positive elements in $\mathcal{A}$. A weight on $\mathcal{A}$ is a map $\psi: \mathcal{A}_{+} \to [0, \infty]$ such that $\psi(a+b)=\psi(a)+\psi(b)$ and $\psi(\lambda a)=\lambda \psi(a)$ for all $a,b \in \mathcal{A}_{+}$ and $\lambda \geq 0$. We call a weight $\psi$
\begin{itemize}
\item \emph{densely defined} if $\{ a\in \mathcal{A}_{+} \ : \ \psi(a) <\infty  \}$ is dense in $\mathcal{A}_{+}$,
\item \emph{lower semi-continuous} if $\{ a\in \mathcal{A}_{+} \ : \ \psi(a) \leq\lambda  \}$ is closed for all $\lambda \geq 0$, and
\item \emph{proper} if it is densely defined and lower semi-continuous.
\end{itemize}
For any proper weight $\psi$ we define a left ideal $\mathcal{N}_{\psi}:=\{ a\in \mathcal{A} \ :\ \psi(a^{*}a) <\infty \}$ and we set $\mathcal{M}_{\psi}^{+}:=\{a \in \mathcal{A}_{+}\ | \ \psi(a)<\infty  \}$. Setting
\begin{equation*}
  \mathcal{M}_{\psi}:=\operatorname{span}
  \{ a^{*}b \ : \ a,b \in  \mathcal{N}_{\psi} \}
\end{equation*}
then $\mathcal{M}_{\psi}=\operatorname{span} \ \mathcal{M}_{\psi}^{+}$, and it is a dense $*$-subalgebra of $\mathcal{A}$. There is a unique linear extension $\mathcal{M}_{\psi} \to \mathbb{C}$ of $\psi$ on $\mathcal{M}_{\psi} \cap \mathcal{A}_{+}$ which we also denote by $\psi$.  In this paper we will use a definition of $\beta$-KMS weights inspired by Definition 2.8 in \cite{K}. For the relation between this definition and the original one by Combes in \cite{Combes} see Theorem 6.36 in \cite{K}.

\begin{defn} \label{dweight}
Let $\mathcal{A}$ be a $C^{*}$-algebra, $\alpha : \mathbb{R} \to \text{Aut}(A)$ a continuous 1-parameter group and let $\beta \in \mathbb{R}$. We call a weight $\psi$ on $\mathcal{A}$ a $\beta$-KMS weight for $\alpha$ if it is a proper weight satisfying
\begin{enumerate}
\item $\psi \circ \alpha_{t}=\psi$ for all $t\in \mathbb{R}$.
\item \label{w2} For every $a\in D(\alpha_{-\beta i/2})$ we have 
\begin{equation*}
\psi(a^{*}a)=\psi(\alpha_{-\beta i/2}(a) \alpha_{-\beta i/2}(a)^{*} ) .
\end{equation*}
\end{enumerate}
We call $\psi$ a $\beta$-KMS state for $\alpha$ if $\sup\{\psi(a) \ : \ 0\leq a \leq 1 \}=1$.
\end{defn}

Notice that the weight $\psi=0$ is always a $\beta$-KMS weight by Definition \ref{dweight}. This differs from e.g. \cite{KV, T3} where a proper weight is by definition non-zero. We define the weight $\psi=0$ to be a $\beta$-KMS weight because we want to prove that the set of $\beta$-KMS weights for diagonal actions on \'etale groupoid $C^{*}$-algebras are closed c.f. Section \ref{section5}, which will not be true if one removes the zero weight.

If one ignore the weight $\psi=0$, the KMS weights for $\alpha$ defined in \cite{K,KV} are exactly the $-1$-KMS
weights for $\alpha$ in Definition~\ref{dweight}. For $\beta\neq 0$ we
can translate the results of \cite{K, KV} into our setting by noticing
that in Definition~\ref{dweight} a weight is a $\beta$-KMS weight for
$\{\alpha_{t}\}_{t\in \mathbb{R}}$ if and only if it is a $-1$-KMS weight for
$\{\alpha_{-\beta t}\}_{t\in \mathbb{R}}$. For $\beta=0$ we can
translate the results by using that a $0$-KMS weight for $\alpha$ in Definition \ref{dweight} is
precisely a $-1$-KMS weight for $\{\alpha_{-\beta t}\}_{t\in \mathbb{R}}=\{\text{Id}_{\mathcal{A}}\}_{t\in \mathbb{R}}$
invariant under $\alpha$.

One can extend a KMS weight on a $C^{*}$-algebra to a KMS weight on a von Neumann algebra via the GNS construction.

\begin{defn} \label{dGNS}
Let $\psi$ be a proper weight on a $C^{*}$-algebra $\mathcal{A}$. A \emph{GNS} construction for $\psi$ is a triple $(H_{\psi}, \pi_{\psi}, \Lambda_{\psi})$ where $H_{\psi}$ is a Hilbert space, $\Lambda_{\psi} : \mathcal{N}_{\psi} \to H_{\psi}$ is a linear map with dense image such that
\begin{equation*}
\langle \Lambda_{\psi}(a), \Lambda_{\psi}(b) \rangle =\psi(b^{*}a) \qquad \text{for all } a,b \in \mathcal{N}_{\psi}
\end{equation*}
and $\pi_{\psi} : \mathcal{A} \to B(H_{\psi})$ is a representation with $\pi_{\psi}(a) \Lambda_{\psi}(b)=\Lambda_{\psi}(ab)$ for all $a\in \mathcal{A}$ and $b\in \mathcal{N}_{\psi}$.
\end{defn}

As for states, one can prove that there exists a GNS
construction for a proper weight and that it is unique up to a unitary
transformation. If $\psi$ is a $\beta$-KMS weight for the $C^{*}$-dynamical system $(\mathcal{A}, \alpha)$ with GNS triple $(H_{\psi}, \pi_{\psi}, \Lambda_{\psi})$ it follows from e.g. Result 2.16 in \cite{K} that there exists a strongly continuous unitary group representation $\mathbb{R} \ni t \to u_{t}$ on $H_{\psi}$ such that $\Lambda_{\psi}(\alpha_{t}(a))=u_{t}\Lambda_{\psi}(a)$ for all $a\in \mathcal{N}_{\psi}$. Combining Result 2.3 and Lemma 4.1 in \cite{K} we see that for any $a\in \mathcal{N}_{\psi}$ then $a(n)\in \mathcal{N}_{\psi}$ for all $n\in \mathbb{N}$ and
\begin{equation}\label{eLambda}
\Lambda_{\psi}(a(n))=\frac{n}{\sqrt{\pi}}\int_{\mathbb{R}} e^{-n^{2}t^{2}} u_{t} \Lambda_{\psi}(a) \ \mathrm{d} t \to \Lambda_{\psi}(a) \text{ for } n \to \infty \ .
\end{equation}
To extend a KMS weight to a von Neumann algebra, let us
recall that a strongly continuous $1$-parameter group $\alpha$ on a
von Neumann algebra $M$ is a $1$-parameter group such that the map
$\mathbb{R} \ni t\to \alpha_{t}(a)$ is
$\sigma$-weakly continuous. We call a
weight $\phi$ on a von Neumann algebra $M$ \emph{semi-finite} when
$\{a\in M_{+} \ : \ \phi(a) <\infty\}$ is $\sigma$-weak dense in
$M_{+}$ and we call $\phi$ \emph{normal} when
$\{a\in M_{+} \ : \ \phi(a) \leq \lambda\}$ is $\sigma$-weak closed
for all $\lambda \geq 0$.

\begin{thm} \label{tgns}
Let $\psi$ be a non-zero $\beta$-KMS weight for a continuous $1$-parameter group $\alpha$ on a $C^{*}$-algebra $\mathcal{A}$ and let $(H_{\psi}, \pi_{\psi}, \Lambda_{\psi})$ be a GNS-construction for $\psi$.
\begin{enumerate}
\item There exists a normal faithful semi-finite weight $\tilde{\psi}$ on $\pi_{\psi}(\mathcal{A})''$ such that $\psi=\tilde{\psi} \circ \pi_{\psi}$.
\item $\tilde{\alpha}_{t}=\text{Ad} \ u_{t}$ defines a strongly continuous $1$-parameter group on $\pi_{\psi}(\mathcal{A})''$ with $\pi_{\psi}\circ \alpha_{t}=\tilde{\alpha}_{t}\circ\pi_{\psi}$ for all $t\in \mathbb{R}$.
\item $\{\tilde{\alpha}_{-\beta t}\}_{t\in \mathbb{R}}$ is the modular automorphism group with respect to $\tilde{\psi}$.
\end{enumerate}
\end{thm}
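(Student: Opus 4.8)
The plan is to read all three assertions off the GNS data by realising $\Lambda_\psi(\mathcal N_\psi\cap\mathcal N_\psi^*)$ as a left Hilbert algebra and invoking Tomita--Takesaki theory, with the KMS condition of Definition~\ref{dweight} pinning down the modular structure. Part~(2) is the most elementary, and I would do it first, directly on the dense subspace $\Lambda_\psi(\mathcal N_\psi)$. Since $\psi\circ\alpha_t=\psi$, the ideal $\mathcal N_\psi$ is $\alpha$-invariant and $u_t^*\Lambda_\psi(b)=u_{-t}\Lambda_\psi(b)=\Lambda_\psi(\alpha_{-t}(b))$ for $b\in\mathcal N_\psi$; hence for $a\in\mathcal A$,
\begin{equation*}
(\operatorname{Ad}u_t)(\pi_\psi(a))\Lambda_\psi(b)=u_t\pi_\psi(a)\Lambda_\psi(\alpha_{-t}(b))=u_t\Lambda_\psi(a\alpha_{-t}(b))=\Lambda_\psi(\alpha_t(a)b)=\pi_\psi(\alpha_t(a))\Lambda_\psi(b).
\end{equation*}
As $\Lambda_\psi(\mathcal N_\psi)$ is dense this gives $\tilde\alpha_t\circ\pi_\psi=\pi_\psi\circ\alpha_t$, so $\operatorname{Ad}u_t$ sends the generators $\pi_\psi(\mathcal A)$ into $M:=\pi_\psi(\mathcal A)''$ and, being a normal automorphism of $B(H_\psi)$, restricts to a $1$-parameter automorphism group of $M$; $\sigma$-weak continuity of $t\mapsto\tilde\alpha_t$ follows from strong continuity of $t\mapsto u_t$.

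For part~(1) I would build the weight from a left Hilbert algebra. Using the smearing elements $a(n)$ and \eqref{eLambda} one checks that $\Lambda_\psi(\mathcal N_\psi\cap\mathcal N_\psi^*)$ contains a dense subspace $\mathfrak A\subseteq H_\psi$ of analytic vectors that is a $*$-algebra under $\Lambda_\psi(a)\Lambda_\psi(b):=\Lambda_\psi(ab)$ (well defined because $\pi_\psi(a)\Lambda_\psi(b)=\Lambda_\psi(ab)$) and $\Lambda_\psi(a)^\sharp:=\Lambda_\psi(a^*)$. The KMS condition of Definition~\ref{dweight}(\ref{w2}) is what makes the involution closable, so that $\mathfrak A$ is a left Hilbert algebra; its left multiplication operators are exactly the $\pi_\psi(a)$, so its left von Neumann algebra is $M$. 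The weight $\tilde\psi$ canonically associated to a left Hilbert algebra is then automatically normal, semi-finite and faithful on $M$, with $\Lambda_\psi$ a restriction of its GNS map. Finally $\tilde\psi\circ\pi_\psi=\psi$ is verified first on $\mathcal M_\psi^+$ through $\tilde\psi(\pi_\psi(a)^*\pi_\psi(a))=\|\Lambda_\psi(a)\|^2=\psi(a^*a)$ and then on all of $\mathcal A_+$ by lower semi-continuity; semi-finiteness also follows from the norm-density of $\mathcal M_\psi$ in $\mathcal A$.

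For part~(3) I would show that $\tilde\psi$ satisfies the von Neumann algebraic $-1$-KMS condition with respect to $\{\tilde\alpha_{-\beta t}\}$ and then appeal to the uniqueness of the modular automorphism group of a normal faithful semi-finite weight. The crucial observation is that Definition~\ref{dweight}(\ref{w2}) is precisely the Tomita--Takesaki relation after rescaling by $-\beta$: writing $S=J\Delta^{1/2}$ for the closure of $\Lambda_\psi(a)\mapsto\Lambda_\psi(a^*)$, substituting $a=\alpha_{\beta i/2}(x)$ into $\psi(a^*a)=\psi(\alpha_{-\beta i/2}(a)\alpha_{-\beta i/2}(a)^*)$ turns it into $\|\Lambda_\psi(\alpha_{\beta i/2}(x))\|=\|\Lambda_\psi(x^*)\|$ for analytic $x$, matching $J\Delta^{1/2}\Lambda_\psi(x)=\Lambda_\psi(x^*)$ under $\Delta^{it}=u_{-\beta t}$. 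Equivalently, the reformulation noted after Definition~\ref{dweight}, namely that $\psi$ is a $-1$-KMS weight for $\{\alpha_{-\beta t}\}$, lifts to $\tilde\psi$; uniqueness then forces $\sigma_t^{\tilde\psi}=\operatorname{Ad}u_{-\beta t}=\tilde\alpha_{-\beta t}$.

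The step I expect to be the main obstacle is the rigorous identification in part~(3) that the rescaled implementing group $\{u_{-\beta t}\}$ is genuinely the modular group $\{\Delta^{it}\}$, and not merely a group inducing the same automorphisms. This requires showing that the analytic elements $\alpha_{-\beta i/2}(a)$ of Definition~\ref{dweight} land in $D(\Delta^{1/2})$ and act compatibly with the modular conjugation $J$ on a common core, which in turn rests on transferring the KMS identity from the norm-dense $*$-algebra $\pi_\psi(\mathcal A)$ to $\sigma$-weakly dense subsets of $M$ while keeping the two analytic continuations---$\alpha_z$ on $\mathcal A$ and the generator of $u_t$ on $H_\psi$---aligned. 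The strong continuity of $u_t$ together with the smearing relation \eqref{eLambda}, which already provides a supply of analytic vectors converging to any $\Lambda_\psi(a)$, are the tools that make this alignment possible.
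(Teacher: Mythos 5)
Your route and the paper's are different in kind: the paper's entire proof is a citation --- for $\beta=-1$ all three statements are taken from Section 2.2 of \cite{KV2}, the case $\beta\neq 0$ is reduced to $\beta=-1$ by replacing $\alpha_{t}$ with $\alpha_{-\beta t}$ (exactly the rescaling remark you invoke at the end of your part (3)), and $\beta=0$ is handled by viewing a $0$-KMS weight as a tracial $-1$-KMS weight for the trivial action. What you propose is essentially to reprove the content of the cited reference from scratch via the left Hilbert algebra $\Lambda_{\psi}(\mathcal{N}_{\psi}\cap\mathcal{N}_{\psi}^{*})$ and Tomita--Takesaki theory. That is the correct and standard construction (it is how \cite{KV2} and \cite{K} themselves proceed), and your part (2) is complete as written: it uses only $\alpha$-invariance of $\psi$, works uniformly in $\beta$ including $\beta=0$, and matches what the paper gets from Corollary 2.20 of \cite{KV2}.

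Parts (1) and (3), however, remain outlines at precisely the points where the real work sits. First, the claim that the sharp operation $\Lambda_{\psi}(a)\mapsto\Lambda_{\psi}(a^{*})$ is preclosed and that $\mathfrak{A}$ is a genuine left Hilbert algebra is exactly where condition (2) of Definition \ref{dweight} must be used quantitatively; ``the KMS condition is what makes the involution closable'' is the right slogan but not an argument. Second, the identity $\tilde{\psi}\circ\pi_{\psi}=\psi$ on all of $\mathcal{A}_{+}$ does not follow from lower semi-continuity alone: knowing the two weights agree on $\mathcal{M}_{\psi}^{+}$ and that both are lower semi-continuous gives only an inequality in general, and one needs either the characterisation of a proper weight as the supremum of the positive functionals it dominates or an approximation argument with the smeared elements $a(n)$; this is one of the delicate points of \cite{KV2}. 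Third, you correctly identify the identification $\Delta^{it}=u_{-\beta t}$ (rather than merely $\operatorname{Ad}u_{-\beta t}=\sigma^{\tilde{\psi}}_{t}$ as automorphisms) as the main obstacle, and you do not resolve it; the norm identity $\lVert\Lambda_{\psi}(\alpha_{\beta i/2}(x))\rVert=\lVert\Lambda_{\psi}(x^{*})\rVert$ on analytic elements is consistent with it but does not by itself determine the polar decomposition of the closure of the sharp map. None of these steps would fail --- they are all carried out in the reference the paper cites --- but as submitted your proposal defers rather than supplies them, so it should either be completed along these lines or, more economically, replaced by the paper's citation-plus-rescaling argument.
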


\begin{proof}[Proof of Theorem~\ref{tgns}]
When $\beta=-1$ these statements are the content of Section 2.2 in \cite{KV2}, and for general
  $\beta \neq 0$ the statements then follows by scaling the action. When $\beta=0$ then $(1)$ and $(3)$ follow since a $0$-KMS weight is a $-1$-KMS weight for the trivial action, and $(2)$ follows from e.g. Corollary 2.20 in \cite{KV2}.
\end{proof}

\section{KMS weights and the Pedersen ideal} \label{section3a}
In this section we will prove that for a given one-parameter group $\alpha$ on a $C^{*}$-algebra $\mathcal{A}$ there are certain positive elements in the Pedersen ideal on which any $\beta$-KMS weight for $\alpha$ will take finite values. 

Let us first recall the definition of the Pedersen ideal. Let throughout $\mathcal{A}$ be a $C^{*}$-algebra. Define the set
\begin{equation*}
F(\mathcal{A}):=\{ a\in \mathcal{A}_{+} \ | \ \exists b\in \mathcal{A}_{+} \text{ with } ab=a \}
\end{equation*}
and let
\begin{equation*}
\text{Ped}(\mathcal{A})_{+}:=\Big\{ a\in \mathcal{A}_{+} \ | \ \exists a_{1}, a_{2}, \dots, a_{n} \in F(\mathcal{A}) \text{ with } a\leq \sum_{i=1}^{n} a_{i} \Big \}.
\end{equation*}
The \emph{Pedersen ideal} $\text{Ped}(\mathcal{A})$ of $\mathcal{A}$ is then the set $\text{span}\{\text{Ped}(\mathcal{A})_{+}\}$. We refer the reader to Section 5.6 in \cite{Ped} for an introduction to the Pedersen ideal and its properties, and here we only remark that $\text{Ped}(\mathcal{A})$ is a dense ideal of $\mathcal{A}$.

\begin{prop} \label{prop31}
Let $\psi$ be a $\beta$-KMS weight for a continuous $1$-parameter group $\alpha$ on a $C^{*}$-algebra $\A$. It follows that
\begin{equation*}
 F(\mathcal{A})\cap D(\alpha_{-i\beta/2})  \subseteq \mathcal{N}_{\psi}.
\end{equation*}
\end{prop}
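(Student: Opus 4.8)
The plan is to show $\psi(a^{*}a)<\infty$ for $a\in F(\A)\cap D(\alpha_{-i\beta/2})$ by realising $\Lambda_{\psi}(a)$ as a limit of genuine GNS–vectors. First dispose of the trivial case $\psi=0$, where $\mathcal{N}_{\psi}=\A$. Assume $\psi\neq 0$ and fix $b\in\A_{+}$ with $ab=a$; since $a=a^{*}$ we get $ba=a$ as well, so $a$ and $b$ commute, and replacing $b$ by $\min(b,1)$ we may assume $0\le b\le 1$ while keeping $ab=a$. Because $\psi$ is densely defined we may choose $d_{n}\in\mathcal{M}_{\psi}^{+}$ with $0\le d_{n}\le 1$ and $d_{n}\to b$ in norm; from $d_{n}\le 1$ we get $d_{n}^{2}\le d_{n}$, hence $\psi(d_{n}^{2})\le\psi(d_{n})<\infty$ and $d_{n}\in\mathcal{N}_{\psi}$. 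As $\mathcal{N}_{\psi}$ is a left ideal, $ad_{n}\in\mathcal{N}_{\psi}$ with $\Lambda_{\psi}(ad_{n})=\pi_{\psi}(a)\Lambda_{\psi}(d_{n})$, and $ad_{n}\to ab=a$ in norm.

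The soft part of the argument is then to upgrade the norm-convergence $ad_{n}\to a$ to a statement about $a$ itself. The GNS map $\Lambda_{\psi}$ is closed, so it suffices to produce elements converging to $a$ in norm whose images under $\Lambda_{\psi}$ converge in $H_{\psi}$. I would obtain these from the single uniform bound
\[
\sup_{n}\ \lVert\Lambda_{\psi}(ad_{n})\rVert^{2}=\sup_{n}\ \psi\!\left(d_{n}a^{2}d_{n}\right)<\infty .
\]
Granting it, $(\Lambda_{\psi}(ad_{n}))_{n}$ is bounded, so a subsequence converges weakly to some $\xi\in H_{\psi}$; by Mazur's lemma suitable convex combinations $c_{k}$ of the $ad_{n}$ satisfy $\Lambda_{\psi}(c_{k})\to\xi$ in norm, while $c_{k}\to a$ in norm because each $ad_{n}\to a$. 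Closedness of $\Lambda_{\psi}$ then gives $a\in\mathcal{N}_{\psi}$ and $\Lambda_{\psi}(a)=\xi$, which is exactly the claim.

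Everything therefore hinges on the uniform estimate, and this is the step I expect to be the main obstacle. The naive bound $\lVert\pi_{\psi}(a)\Lambda_{\psi}(d_{n})\rVert\le\lVert a\rVert\,\lVert\Lambda_{\psi}(d_{n})\rVert$ is useless, since $\lVert\Lambda_{\psi}(d_{n})\rVert^{2}=\psi(d_{n}^{2})\to\psi(b^{2})$ may be infinite; the finiteness must instead be extracted from $a\in D(\alpha_{-i\beta/2})$ through the KMS condition. Concretely I would pass, via Theorem~\ref{tgns}, to the von Neumann algebra $M=\pi_{\psi}(\A)''$ with its normal faithful semi-finite weight $\tilde{\psi}$ and modular group $\sigma_{t}=\tilde{\alpha}_{-\beta t}$, so that $A:=\pi_{\psi}(a)\in D(\sigma_{i/2})$ with $\sigma_{i/2}(A)=\pi_{\psi}(\alpha_{-i\beta/2}(a))$ a \emph{fixed bounded} operator. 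Writing $\psi(d_{n}a^{2}d_{n})=\tilde{\psi}\big((Ad_{n})^{*}(Ad_{n})\big)$ and using the modular (KMS) identity to rotate the analytic factor $A$ onto $\sigma_{i/2}(A)$, the contribution of the a~priori unbounded $d_{n}$ should be absorbed by this fixed bounded element together with the local-unit relation $ab=a$. The difficulty is that $\sigma_{i/2}$ is unbounded and $\tilde{\psi}$ is not a trace, so norms of analytic continuations are not controlled and the obvious manipulations merely reintroduce the unknown $\psi(a^{2})$ itself; morally one wants the \emph{analytically continued local unit} $\alpha_{-i\beta/2}(b)$, which does not exist because $b\notin D(\alpha_{-i\beta/2})$. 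Making the absorption uniform in $n$ — for instance by smearing $b$ to the analytic elements $b(m)$ of \eqref{esmear} and controlling the resulting double limit with \eqref{eLambda} — is the crux, and it is precisely here that $a\in D(\alpha_{-i\beta/2})$ is indispensable; the case $\beta=0$ is easier, since then $\sigma$ is trivial, $\psi$ is a trace, and finiteness on $F(\A)$ follows from Pedersen-ideal considerations.
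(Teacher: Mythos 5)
Your reduction of the problem to the single estimate $\sup_{n}\psi(d_{n}a^{2}d_{n})<\infty$ is sound (indeed, lower semi-continuity of $\psi$ applied to $d_{n}a^{2}d_{n}\to a^{2}$ would let you skip the weak-compactness and Mazur steps entirely), but you have not proved that estimate, and you say so yourself: the ``absorption'' of the unbounded factors $d_{n}$ by the fixed element $\alpha_{-i\beta/2}(a)$ is exactly the content of the proposition, and the manipulations you sketch (smearing $b$, the modular identity $\Lambda(Ax)=J\sigma_{i/2}(A)^{*}J\Lambda(x)$) all reintroduce either $\psi(d_{n}^{2})\to\psi(b^{2})$ or $\psi(a^{2})$ itself, neither of which is known to be finite. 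As it stands the proposal is an honest reduction to an unproved lemma, i.e.\ it has a genuine gap at the crux.

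The missing idea, which makes the whole sequence $d_{n}$ unnecessary, is to work with the range projection $p\in M=\pi_{\psi}(\A)''$ of $\pi_{\psi}(a)$ rather than with approximations of $a$ in $\mathcal{N}_{\psi}$. From $ab=a$ one gets $p\pi_{\psi}(b)=p=\pi_{\psi}(b)p$, hence $p\pi_{\psi}(b^{2})p=p$. Now choose a \emph{single} analytic $c\in\A_{+}$ with $\psi(c)<\infty$ and $\lVert c^{2}-b^{2}\rVert<1/2$ (possible by density of $\mathcal{M}_{\psi}^{+}$ plus smearing as in \eqref{esmear}, which preserves positivity, analyticity and finiteness of $\psi$). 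Then $\lVert p\pi_{\psi}(c^{2})p-p\rVert<1/2$, so spectral theory gives $p\le 2p\pi_{\psi}(c^{2})p$, and multiplying by $\pi_{\psi}(a)$ on both sides yields the operator inequality $\pi_{\psi}(a^{2})\le 2\pi_{\psi}(a)\pi_{\psi}(c^{2})\pi_{\psi}(a)$ in $M_{+}$. Monotonicity of the normal extension $\tilde{\psi}$ then gives $\psi(a^{2})\le 2\psi(ac^{2}a)$, and since both $c$ and $a$ lie in $D(\alpha_{-i\beta/2})$ the KMS condition applies directly to $ca$:
\begin{equation*}
\psi(ac^{2}a)=\psi(\alpha_{-i\beta/2}(ca)\,\alpha_{-i\beta/2}(ca)^{*})\le\lVert\alpha_{-i\beta/2}(a)\rVert^{2}\,\psi(c^{2})\le\lVert\alpha_{-i\beta/2}(a)\rVert^{2}\,\lVert c\rVert\,\psi(c)<\infty .
\end{equation*}
Note that the local unit $b$ is never analytically continued; it is only used through the algebraic identity $p\pi_{\psi}(b)=p$ and the norm estimate $\lVert c^{2}-b^{2}\rVert<1/2$, which is precisely how one escapes the obstruction you correctly identified ($b\notin D(\alpha_{-i\beta/2})$ and no uniform control on $\psi(d_{n}^{2})$).
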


\begin{proof}
Let $(H_{\psi}, \pi_{\psi}, \Lambda_{\psi})$ be a GNS triple corresponding to $\psi$. Set $M=\pi_{\psi}(\A)''$ and let $\widetilde{\psi}$ be the extension of $\psi$ to $M$ c.f. Theorem \ref{tgns}. Fix $a\in F(\mathcal{A})\cap D(\alpha_{-i\beta/2})$ and pick an element $b\in \A_{+}$ with $ab=a$. We can without loss of generality assume that $\lVert a \rVert \leq 1$. The sequence $\{\pi_{\psi}(a)^{1/n}\}_{n\in \mathbb{N}}$ converges in the strong operator topology in $M$ to the range projection $p\in M$ of $\pi_{\psi}(a)$. Since $\pi_{\psi}(a)$ and $\pi_{\psi}(b)$ commute the $C^{*}$-algebra $C^{*}(\pi_{\psi}(a), \pi_{\psi}(b), 1)\subseteq M$ is commutative and the function representing $\pi_{\psi}(b)$ equals $1$ on the support of the function representing $\pi_{\psi}(a)$. Since the support of $\pi_{\psi}(a)$ and $\pi_{\psi}(a)^{1/n}$ agree for all $n$, we get that
\begin{equation*}
p \pi_{\psi}(b)=p= \pi_{\psi}(b) p.
\end{equation*}
Since $\psi$ is densely defined there exists an element $c\in \A_{+}$  with $\psi(c)<\infty$ and $\lVert c^{2}- b^{2}\rVert <1/4$. For each $n\in \mathbb{N}$ defining $c(n)$ as in \eqref{esmear} then $c(n)$ is analytic for $\alpha$ with $c(n)\in \A_{+}$ and $c(n)\to c$ for $n\to \infty$, and hence we can assume by e.g. Lemma 2.12 in \cite{K} that there is an analytic element $c \in \A_{+}$ with $\psi(c)<\infty$ and $\lVert c^{2}- b^{2}\rVert <1/2$. This gives us the following inequality
\begin{align*}
   \lVert p\pi_{\psi}(c^{2})p -p\rVert_{B(H_{\psi})}
  &=\lVert p\left(\pi_{\psi}(c^{2}) - \pi_{\psi}(b^{2})\right) p\rVert_{B(H_{\psi})} \\
  &\leq\lVert \pi_{\psi}(c^{2}) -
  \pi_{\psi}(b^{2})\rVert_{B(H_{\psi})} 
  \leq \lVert c^{2} -
  b^{2}\rVert < 1/2 .
\end{align*}
Spectral theory now implies that $p\leq 2p\pi_{\psi}(c^{2})p$. Since $p$ is the range projection of $\pi_{\psi}(a)$ then $p\pi_{\psi}(a)=\pi_{\psi}(a)$, and hence
\begin{equation*}
\pi_{\psi}(a^{2})=\pi_{\psi}(a)p\pi_{\psi}(a) \leq 2 \pi_{\psi}(a)p\pi_{\psi}(c^{2})p \pi_{\psi}(a)
=2 \pi_{\psi}(a)\pi_{\psi}(c^{2}) \pi_{\psi}(a).
\end{equation*}
This implies that
\begin{equation*}
\psi(a^{2})=\tilde{\psi}(\pi_{\psi}(a^{2}))\leq 2 \tilde{\psi}(\pi_{\psi}(a)\pi_{\psi}(c^{2}) \pi_{\psi}(a))=2\psi(ac^{2}a)
\end{equation*}
and hence to prove the proposition it suffices to argue that $\psi(ac^{2}a)<\infty$. Since $\psi$ is a $\beta$-KMS weight and $a,c\in D(\alpha_{-i\beta /2})$ this fact however follows from the calculation
\begin{align*}
\psi(ac^{2}a)&=\psi(\alpha_{-i\beta/2}(ca) \alpha_{-i\beta/2}(ca)^{*})
=\psi(\alpha_{-i\beta/2}(c) \alpha_{-i\beta/2}(a)\alpha_{-i\beta/2}(a)^{*}\alpha_{-i\beta/2}(c)^{*} ) \\
&\leq \lVert\alpha_{-i\beta/2}(a) \rVert^{2} \psi(\alpha_{-i\beta/2}(c) \alpha_{-i\beta/2}(c)^{*} ) =\lVert\alpha_{-i\beta/2}(a) \rVert^{2} \psi(c^{2} )<\infty.
\end{align*}
In conclusion $\psi(a^{2})<\infty$ and we have proved the proposition.
\end{proof}

\begin{cor} \label{cor32}
Assume that $\A$ is a $C^{*}$-algebra and $\alpha$ is a continuous one-parameter group with $F(\mathcal{A})\subseteq D(\alpha_{-i\beta /2})$. It follows that $\text{Ped}(\A)\subseteq \mathcal{M}_{\psi}$ for any $\beta$-KMS weight $\psi$.
\end{cor}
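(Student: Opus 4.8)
The plan is to reduce everything to Proposition~\ref{prop31} by showing that $\psi$ is finite on the positive generators $F(\mathcal{A})$ of the Pedersen ideal. Since $\text{Ped}(\A)=\operatorname{span}\{\text{Ped}(\A)_{+}\}$ and $\mathcal{M}_{\psi}=\operatorname{span}\ \mathcal{M}_{\psi}^{+}$, it suffices to prove the inclusion $\text{Ped}(\A)_{+}\subseteq \mathcal{M}_{\psi}^{+}$, i.e. that $\psi(a)<\infty$ for every $a\in \text{Ped}(\A)_{+}$. Given such an $a$, by definition there are $a_{1},\dots,a_{n}\in F(\mathcal{A})$ with $a\leq \sum_{i=1}^{n}a_{i}$, and since a weight is monotone ($x\leq y$ implies $\psi(x)\leq \psi(y)$, because $\psi(y)=\psi(x)+\psi(y-x)$ in $[0,\infty]$) we get $\psi(a)\leq \sum_{i=1}^{n}\psi(a_{i})$. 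Hence the whole problem collapses to showing $\psi(b)<\infty$ for an arbitrary $b\in F(\mathcal{A})$.

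For this I would pass to the square root. The key claim is that $F(\mathcal{A})$ is closed under taking square roots: if $b\in F(\mathcal{A})$ with $bc=b$ for some $c\in \mathcal{A}_{+}$, then $b^{1/2}\in F(\mathcal{A})$ as well, witnessed by the same $c$. To see this, note first that $bc=b$ forces $b(1-c)=0$; taking adjoints also gives $cb=b$, so $b$ and $c$ commute. Approximating $b^{1/2}$ in norm by polynomials in $b$ without constant term and using $b^{k}(1-c)=b^{k-1}\,b(1-c)=0$ for $k\geq 1$, we obtain $b^{1/2}(1-c)=0$, that is $b^{1/2}c=b^{1/2}$, so indeed $b^{1/2}\in F(\mathcal{A})$.

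With the claim in hand the conclusion is immediate. By the standing hypothesis $F(\mathcal{A})\subseteq D(\alpha_{-i\beta/2})$, so $b^{1/2}\in F(\mathcal{A})\cap D(\alpha_{-i\beta/2})$, and Proposition~\ref{prop31} gives $b^{1/2}\in \mathcal{N}_{\psi}$. By definition of $\mathcal{N}_{\psi}$ this means $\psi\big((b^{1/2})^{*}b^{1/2}\big)=\psi(b)<\infty$, which completes the reduction. I expect the only real point requiring care to be the square-root claim; everything else is just the monotonicity of weights together with the definitions of $\text{Ped}(\A)$, $\mathcal{M}_{\psi}$ and $\mathcal{N}_{\psi}$. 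Note that applying Proposition~\ref{prop31} directly to $b$ would only yield $\psi(b^{2})<\infty$, which does not give $\psi(b)<\infty$ in general, so the passage to $b^{1/2}$ is genuinely needed. One subtlety to keep in mind is that the functional-calculus argument for $b^{1/2}\in F(\mathcal{A})$ should be run inside the (possibly unitized) commutative $C^{*}$-algebra generated by $b$ and $c$, so that the symbol $1-c$ is meaningful; alternatively one avoids the unit entirely by writing $b^{1/2}=\lim_{m}p_{m}(b)$ with $p_{m}(0)=0$ and checking directly that $b^{1/2}-b^{1/2}c=\lim_{m}\big(p_{m}(b)-p_{m}(b)c\big)=0$.
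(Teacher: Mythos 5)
Your proposal is correct and takes essentially the same approach as the paper: the paper's entire proof is the one-line observation that $ab=a$ implies $\sqrt{a}\,b=\sqrt{a}$ (i.e.\ $F(\mathcal{A})$ is closed under square roots), after which Proposition~\ref{prop31} applied to $\sqrt{a}$ gives $\psi(a)<\infty$, and the monotonicity and span reductions you describe finish the argument. You have simply written out in full the details (the polynomial approximation of the square root and the hereditarity of $\mathcal{M}_{\psi}^{+}$) that the paper leaves to the reader.
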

\begin{proof}
If $ab=a$ for $a,b\in\A_{+}$ then $\sqrt{a}b=\sqrt{a}$, and hence $F(\mathcal{A}) \subseteq \mathcal{M}_{\psi}^{+}$. The statement in the Corollary now follows from the definition of $\mathcal{M}_{\psi}$ and $\text{Ped}(\A)_{+}$.
\end{proof}

Remark that Corollary \ref{cor32} implies the well known result that proper tracial weights are finite on the Pedersen ideal, because any proper tracial weight can be considered a $1$-KMS weight for the trivial $1$-parameter group.

\section{The set of KMS weights}\label{section5}
The set of KMS states for a $C^{*}$-dynamical system has some remarkable properties when considered as a convex subset of the dual of the $C^{*}$-algebra, c.f. Theorem 5.3.30 in \cite{BR}. In this section we will use Proposition \ref{prop31} to analyse the structure of the set of KMS weights for a large class of $C^{*}$-dynamical systems, with the aim of proving Theorem \ref{thm46} below. The first hurdle to overcome when analysing the set of KMS weights is to find a sufficiently nice vector space to embed them into, which we do in Proposition \ref{p45} by using our result from Proposition \ref{prop31}. 

\bigskip

If $(\mathcal{A}, \alpha)$ is a $C^{*}$-dynamical system we will denote the fixed-point algebra of $\alpha$ by $\mathcal{A}^{\alpha}$, i.e. 
\begin{equation*}
\mathcal{A}^{\alpha}=\{ x\in  \mathcal{A} \ | \ \alpha_{t}(x)=x \text{ for all } t\in \mathbb{R} \} \ .
\end{equation*}
When $\mathcal{A}$ is separable the proof of Proposition 3.10.5 in \cite{Ped} shows that the following conditions are equivalent:
\begin{enumerate}
\item \label{vigtig0}There exists an approximate identity $\{E_{n}\}_{n\in \mathbb{N}}$ of $\A$ in $\A^{\alpha}$, i.e. $E_{n} \in \mathcal{A}^{\alpha}$ for all $n\in \mathbb{N}$.
\item $\A^{\alpha}$ contains a strictly positive element for $\mathcal{A}$.
\item \label{vigtig} There exists an approximate identity $\{E_{n}\}_{n\in \mathbb{N}}$ of $\A$ in $\A^{\alpha}$ with $E_{n}E_{m}=E_{n}$ whenever $m> n$.
\end{enumerate}

In this section we will analyse the structure of the set of KMS weights for a $C^{*}$-dynamical system $(\mathcal{A}, \alpha)$ with $\mathcal{A}$ separable and $\{E_{n}\}_{n\in \mathbb{N}}$ an approximate identity satisfying \eqref{vigtig}, so let us start with two examples of such $C^{*}$-dynamical systems.

\begin{example}
Let $\mathcal{G}$ be an \'etale groupoid and let $\alpha^{c}$ be a diagonal action induced by a continuous groupoid homomorphism $c:\mathcal{G}\to\mathbb{R}$ as in \eqref{e1par}. We can pick a sequence of functions $\{f_{n}\}_{n=1}^{\infty}\subseteq C_{c}(\mathcal{G}^{(0)})$ with $0\leq f_{n}\leq 1$ and $f_{n}f_{n+1}=f_{n}$ for each $n\in \mathbb{N}$ such that whenever $K\subseteq \mathcal{G}^{(0)}$ is compact there exists a $N\in \mathbb{N}$ with $f_{n}(x)=1$ for all $x\in K$ and $n\geq N$. It follows from \eqref{eprod} that this is a countable approximate identity for both $C_{r}^{*}(\mathcal{G})$ and $C^{*}(\mathcal{G})$ which satisfies \eqref{vigtig}. 
\end{example}

\begin{example}
Let $(\mathcal{A}, \alpha)$ be a $C^{*}$-dynamical system with $\mathcal{A}$ a separable $C^{*}$-algebra and $\alpha$ a periodic $1$-parameter group with period $r>0$, i.e. $\alpha_{t+r}=\alpha_{t}$ for all $t\in \mathbb{R}$. Since $\mathcal{A}$ is separable there exists a strictly positive element $x\in \mathcal{A}_{+}$. The element
\begin{equation*}
y:=\int_{0}^{r} \alpha_{t}(x) \ \mathrm{d} t
\end{equation*}
satisfies that $y\in \mathcal{A}^{\alpha}$. By definition any state $\omega$ on $\mathcal{A}$ satisfies $\omega(x)>0$. Since $\omega \circ \alpha_{t}$ is also a state for $t\in \mathbb{R}$, this implies that any state $\omega$ on $\mathcal{A}$ satisfies $\omega(y)>0$, and hence $y$ is strictly positive. In conclusion such a $C^{*}$-dynamical system satisfies \eqref{vigtig} as well.
\end{example}

To analyse the set of KMS weights fix a $C^{*}$-dynamical system $(\mathcal{A}, \alpha)$ satisfying the equivalent conditions \eqref{vigtig0}-\eqref{vigtig} with $\mathcal{A}$ separable, and fix throughout the rest of this section an approximate identity $\{E_{n}\}_{n=1}^{\infty}$ as in \eqref{vigtig}. We define $\mathcal{A}_{n}=\overline{E_{n}\mathcal{A}E_{n}}$ for all $n\in \mathbb{N}$ and denote by $\alpha^{n}$ the restriction of $\alpha$ to $\mathcal{A}_{n}$, so $(\mathcal{A}_{n}, \alpha^{n})$ is again a $C^{*}$-dynamical system. Since $E_{n+1}E_{n}=E_{n}$ we obtain an inductive system of $C^{*}$-algebras
\begin{equation} \label{edirect}
\mathcal{A}_{1} \xrightarrow{\iota_{1}} \mathcal{A}_{2} \xrightarrow{\iota_{2}} \mathcal{A}_{3} \xrightarrow{\iota_{3}} \mathcal{A}_{4} \xrightarrow{\iota_{4}}\cdots
\end{equation}
where $\iota_{n}:\mathcal{A}_{n}\to \mathcal{A}_{n+1}$ is the inclusion map, and clearly the direct limit $\varinjlim_{n\in \mathbb{N}}\mathcal{A}_{n}$ is isomorphic to $\mathcal{A}$, and $\iota_{n,\infty}:\mathcal{A}_{n}\to \mathcal{A}$ is the inclusion map. For each $n\in \mathbb{N}$ we let $\mathcal{A}^{*}_{n}$ denote the dual of $\mathcal{A}_{n}$ which is a locally convex topological vector space in the weak$^{*}$-topology. The inductive system in \eqref{edirect} gives rise to a projective system of locally convex topological vector spaces
\begin{equation*}
\mathcal{A}^{*}_{1} \xleftarrow{\pi_{1}} \mathcal{A}^{*}_{2} \xleftarrow{\pi_{2}} \mathcal{A}^{*}_{3} \xleftarrow{\pi_{3}} \mathcal{A}^{*}_{4} \cdots
\end{equation*}
where $\pi_{n}(\psi)=\psi \circ \iota_{n}$ for $\psi \in \mathcal{A}^{*}_{n+1}$. The inverse limit $\varprojlim_{n\in \mathbb{N}} \mathcal{A}_{n}^{*}$ is then a locally convex topological vector space. For each $\beta \in \mathbb{R}$ we let $\mathcal{W}^{\beta}_{n}$ denote the bounded $\beta$-KMS weights for $(\mathcal{A}_{n}, \alpha^{n})$. Since $\mathcal{W}^{\beta}_{n}$ is a closed convex subset of $\mathcal{A}^{*}_{n}$ for each $n\in  \mathbb{N}$ we can identify the inverse limit $\varprojlim_{n\in \mathbb{N}} \mathcal{W}^{\beta}_{n}$ of the projective system 
\begin{equation*}
\mathcal{W}^{\beta}_{1} \xleftarrow{\pi_{1}} \mathcal{W}^{\beta}_{2} \xleftarrow{\pi_{2}} \mathcal{W}^{\beta}_{3} \xleftarrow{\pi_{3}} \mathcal{W}^{\beta}_{4} \cdots
\end{equation*}
with a closed subset of the vector space $\varprojlim_{n\in \mathbb{N}} \mathcal{A}_{n}^{*}$. For each $\beta \in \mathbb{R}$ we let $\mathcal{W}(\beta, \alpha)$ denote the set of $\beta$-KMS weights for $\alpha$ on $\mathcal{A}$. Our goal is then to prove Proposition \ref{p45} below which identifies $\mathcal{W}(\beta, \alpha)$ with $\varprojlim_{n\in \mathbb{N}} \mathcal{W}^{\beta}_{n}$. The key technical observations which allows us to do this are contained in the following two Lemmas.

\begin{lemma}\label{lhj}
If $\omega$ is a lower semi-continuous weight on $\mathcal{A}$ with $\omega\circ \alpha_{t}=\omega$ for all $t\in \mathbb{R}$, $\omega(E_{n}^{2})<\infty$ for all $n\in \mathbb{N}$ and 
\begin{equation} \label{ehj2}
\omega(x)=\lim_{n\to \infty} \omega(E_{n} x E_{n}) \text{ for all } x\in \mathcal{A}_{+} \ ,
\end{equation}
then $\omega \in \mathcal{W}(\beta , \alpha)$ if and only if
\begin{equation} \label{ehj}
\omega(E_{n}a^{*}E_{m}^{2}aE_{n})=\omega(E_{m}\alpha_{-i\beta /2}(a)E_{n}^{2}\alpha_{-i\beta /2}(a)^{*}E_{m})
\end{equation}
for all $a\in D(\alpha_{-i\beta /2})$ and $n,m\in \mathbb{N}$. 
\end{lemma}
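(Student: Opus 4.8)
The statement is an equivalence; the forward implication follows directly from the definition, so the real content is in the converse.

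For the direction $\omega\in\mathcal{W}(\beta,\alpha)\Rightarrow\eqref{ehj}$, the plan is simply to apply condition \eqref{w2} of Definition~\ref{dweight} to the element $b:=E_{m}aE_{n}$. Each $E_{k}$ lies in $\mathcal{A}^{\alpha}$, hence is analytic with $\alpha_{z}(E_{k})=E_{k}$ for all $z\in\mathbb{C}$, so $b\in D(\alpha_{-i\beta/2})$ and $\alpha_{-i\beta/2}(b)=E_{m}\alpha_{-i\beta/2}(a)E_{n}$ by multiplicativity of $\alpha_{-i\beta/2}$. Since $E_{n},E_{m}$ are self-adjoint one computes $b^{*}b=E_{n}a^{*}E_{m}^{2}aE_{n}$ and $\alpha_{-i\beta/2}(b)\alpha_{-i\beta/2}(b)^{*}=E_{m}\alpha_{-i\beta/2}(a)E_{n}^{2}\alpha_{-i\beta/2}(a)^{*}E_{m}$, so that \eqref{w2} applied to $b$ is exactly \eqref{ehj}.

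For the reverse implication I would first collect two preliminaries. The $E_{n}$ pairwise commute, since $E_{n}E_{m}=E_{m}E_{n}=E_{n}$ for $m>n$, and being an increasing approximate identity of positive contractions they generate a commutative $C^{*}$-algebra in which $\{E_{n}^{2}\}$ is again an increasing approximate identity; this monotonicity is the only structural input I will use beyond the hypotheses. Next, $\omega$ is proper: lower semi-continuity is assumed, and $\omega$ is densely defined because $E_{n}xE_{n}\leq\lVert x\rVert E_{n}^{2}$ gives $\omega(E_{n}xE_{n})\leq\lVert x\rVert\omega(E_{n}^{2})<\infty$ for every $x\in\mathcal{A}_{+}$, while $E_{n}xE_{n}\to x$ in norm. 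As condition (1) of Definition~\ref{dweight} is among the hypotheses, it then remains only to establish \eqref{w2}. The core idea is to derive \eqref{w2} from \eqref{ehj} by two successive limits, pairing each side at each stage with the convergence tool that yields an \emph{equality} rather than a one-sided bound. Fixing $a\in D(\alpha_{-i\beta/2})$ and letting $n\to\infty$ in \eqref{ehj}, the left-hand side $\omega\big(E_{n}(a^{*}E_{m}^{2}a)E_{n}\big)$ tends to $\omega(a^{*}E_{m}^{2}a)$ by \eqref{ehj2}, whereas the right-hand side is increasing in $n$ and converges in norm to $E_{m}\alpha_{-i\beta/2}(a)\alpha_{-i\beta/2}(a)^{*}E_{m}$, so lower semi-continuity together with monotonicity of $\{E_{n}^{2}\}$ forces its $\omega$-values to $\omega\big(E_{m}\alpha_{-i\beta/2}(a)\alpha_{-i\beta/2}(a)^{*}E_{m}\big)$. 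This produces the intermediate identity
\begin{equation*}
\omega(a^{*}E_{m}^{2}a)=\omega\big(E_{m}\alpha_{-i\beta/2}(a)\alpha_{-i\beta/2}(a)^{*}E_{m}\big)
\end{equation*}
for every $m$, and letting $m\to\infty$ while interchanging the two tools — monotone convergence on the left, since $a^{*}E_{m}^{2}a\nearrow a^{*}a$, and \eqref{ehj2} on the right — yields precisely $\omega(a^{*}a)=\omega(\alpha_{-i\beta/2}(a)\alpha_{-i\beta/2}(a)^{*})$, which is \eqref{w2}.

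The step I expect to be most delicate is this limit bookkeeping. One must check that each monotone-convergence step is valid for a lower semi-continuous weight even when the values are $+\infty$, which rests on the elementary fact that $\omega(x)=\sup_{k}\omega(x_{k})$ for any norm-convergent increasing sequence $x_{k}\nearrow x$ in $\mathcal{A}_{+}$, and one must keep track that \eqref{ehj2} applies only to cutoffs of the form $E_{\bullet}(\cdot)E_{\bullet}$, while the ``inner'' occurrences of the approximate identity are handled by monotonicity. It is exactly the asymmetry of which variable tends to infinity on each side that dictates the two-step order and converts the one-sided bounds coming from lower semi-continuity into the equalities needed.
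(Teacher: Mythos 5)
Your proposal is correct and follows essentially the same route as the paper: the forward direction by applying the KMS condition to $E_{m}aE_{n}$ (using that the $E_{k}$ are $\alpha$-fixed, hence analytic with $\alpha_{z}(E_{k})=E_{k}$), and the converse by taking $n\to\infty$ and then $m\to\infty$ in \eqref{ehj}, pairing \eqref{ehj2} on one side with lower semi-continuity plus monotonicity on the other. Your treatment of the limit bookkeeping is in fact slightly more explicit than the paper's, which compresses the monotonicity argument into the phrase ``by lower semi-continuity,'' but the content is identical.
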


\begin{proof}
The ``only if'' part follows by observing that when $a\in D(\alpha_{-i\beta /2})$ then $E_{m}aE_{n}\in D(\alpha_{-i\beta /2})$ with $\alpha_{-i\beta /2}(E_{m}aE_{n})=E_{m}\alpha_{-i\beta /2}(a)E_{n}$. For the other direction assume that $\omega(E_{n}^{2}) <\infty$ for all $n\in \mathbb{N}$, that $\omega$ is lower semi-continuous, $\alpha$-invariant and satisfies \eqref{ehj2} and \eqref{ehj}. Since $\omega(E_{n}xE_{n})<\infty$ for all $n\in \mathbb{N}$ and $x\in \mathcal{A}_{+}$ we get that $\omega$ is densely defined. For any $a\in D(\alpha_{-i\beta /2})$ and $m\in \mathbb{N}$ we have by lower semi-continuity that
 \begin{equation*}
\lim_{n\to \infty}\omega(E_{m}\alpha_{-i\beta /2}(a)E_{n}^{2}\alpha_{-i\beta /2}(a)^{*}E_{m}) = \omega(E_{m}\alpha_{-i\beta /2}(a)\alpha_{-i\beta /2}(a)^{*}E_{m}) 
\end{equation*}
and hence by \eqref{ehj2} and \eqref{ehj} we have
\begin{equation*} 
\omega(a^{*}E_{m}^{2}a)=\omega(E_{m}\alpha_{-i\beta /2}(a)\alpha_{-i\beta /2}(a)^{*}E_{m}) \ .
\end{equation*}
Using the same argument for $m\to \infty$ we get that $\omega(a^{*}a)=\omega(\alpha_{-i\beta /2}(a)\alpha_{-i\beta /2}(a)^{*}) $ and hence $\omega \in \mathcal{W}(\beta , \alpha)$.
\end{proof}

\begin{lemma} \label{lem51}
Assume $\beta \in \mathbb{R}$ and assume $\{\omega_{n}\}_{n=1}^{\infty}$ is a sequence with $\omega_{n}\in \mathcal{W}^{\beta}_{n}$ for each $n\in \mathbb{N}$.
\begin{enumerate}
\item \label{lembet1}If $\omega_{n+1}|_{\mathcal{A}_{n}}\geq \omega_{n}$ for each $n\in \mathbb{N}$ and there exists a constant $K_{m}>0$ for each $m\in \mathbb{N}$ with
\begin{equation} \label{ebegr}
\omega_{n}(E_{m}^{2}) \leq K_{m} \text{  for all } n\geq m
\end{equation}
then the formula
\begin{equation} \label{evoks}
\omega(x):=\lim_{n\to \infty} \omega_{n}(E_{n} x E_{n}) \quad \text{ for } x\in \mathcal{A}_{+}
\end{equation}
defines an element $\omega \in \mathcal{W}(\beta , \alpha)$.
\item  \label{lembet2}If $\omega_{n+1}|_{\mathcal{A}_{n}}\leq \omega_{n}$ for each $n\in \mathbb{N}$ then the formula
\begin{equation} \label{eaftag}
\omega(x):=\lim_{n\to \infty} \lim_{m\to \infty} \omega_{m}(E_{n} x E_{n}) \quad x\in \mathcal{A}_{+}
\end{equation}
defines an element $\omega \in \mathcal{W}(\beta , \alpha)$.
\end{enumerate}
\end{lemma}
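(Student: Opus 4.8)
The strategy is to verify, for the weight $\omega$ defined by \eqref{evoks} (resp. by \eqref{eaftag}), the hypotheses of Lemma \ref{lhj}, and then to read off the KMS equation \eqref{ehj} from the fact that each $\omega_{n}$ is a $\beta$-KMS weight. The engine driving both parts is the following \emph{centralizer inequality}: if $\phi$ is a bounded $\beta$-KMS weight on a $C^{*}$-algebra $\mathcal{B}$ whose one-parameter group fixes an element $E$ with $0\le E\le 1$, then
\[
\phi(EyE)\le \phi(y)\qquad \text{for all } y\in \mathcal{B}_{+}.
\]
I would prove this by passing to the modular extension $\tilde\phi$ of Theorem \ref{tgns}: since $E$ is $\alpha$-fixed it is fixed by the modular group, so $\pi_{\phi}(E)$ lies in the centralizer of $\tilde\phi$, and writing $\phi(EyE)=\lVert J\pi_{\phi}(E)J\,\Lambda_{\phi}(y^{1/2})\rVert^{2}$ with $J$ the modular conjugation gives the bound because $\lVert J\pi_{\phi}(E)J\rVert=\lVert E\rVert\le 1$. (Alternatively one stays on $\mathcal{B}$: the KMS condition makes $E$ commute past $\phi$, so $\phi(EyE)=\phi(E^{2}y)$, and $\phi(y)-\phi(E^{2}y)=\phi\bigl((1-E^{2})^{1/2}y(1-E^{2})^{1/2}\bigr)\ge 0$.)

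For part (1) I would first show that $n\mapsto \omega_{n}(E_{n}xE_{n})$ is nondecreasing, so the limit \eqref{evoks} exists in $[0,\infty]$. Indeed $E_{n}xE_{n}\in\mathcal{A}_{n}$ and $\omega_{n+1}|_{\mathcal{A}_{n}}\ge \omega_{n}$, while $E_{n}E_{n+1}=E_{n}$ gives $E_{n}xE_{n}=E_{n}(E_{n+1}xE_{n+1})E_{n}$, so the centralizer inequality for $\omega_{n+1}$ with $E=E_{n}$ yields
\[
\omega_{n}(E_{n}xE_{n})\le \omega_{n+1}(E_{n}xE_{n})\le \omega_{n+1}(E_{n+1}xE_{n+1}).
\]
Hence $\omega=\sup_{n}\omega_{n}(E_{n}\cdot E_{n})$ is a supremum of bounded positive functionals, so it is lower semi-continuous and (since the limits are monotone) a weight; it is $\alpha$-invariant because each $E_{n}$ is $\alpha$-fixed and each $\omega_{n}$ is $\alpha^{n}$-invariant. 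The bound \eqref{ebegr} enters exactly when showing $\omega(E_{m}^{2})<\infty$: for $n>m$ one has $E_{n}E_{m}^{2}E_{n}=E_{m}^{2}$, whence $\omega(E_{m}^{2})=\lim_{n}\omega_{n}(E_{m}^{2})\le K_{m}$. A further application of the centralizer inequality gives both $\omega(E_{N}xE_{N})\le\omega(x)$ and $\sup_{N}\omega(E_{N}xE_{N})=\omega(x)$, which is \eqref{ehj2}. Finally \eqref{ehj} follows by applying the KMS condition of Definition \ref{dweight} for $\omega_{k}$ to $b=E_{m}aE_{n}\in\mathcal{A}_{k}$ (for $k\ge\max(n,m)$; this $b$ lies in the domain of $\alpha^{k}_{-i\beta/2}$ with $\alpha_{-i\beta/2}(b)=E_{m}\alpha_{-i\beta/2}(a)E_{n}$), giving $\omega_{k}(E_{n}a^{*}E_{m}^{2}aE_{n})=\omega_{k}(E_{m}\alpha_{-i\beta/2}(a)E_{n}^{2}\alpha_{-i\beta/2}(a)^{*}E_{m})$, and letting $k\to\infty$. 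Lemma \ref{lhj} then yields $\omega\in\mathcal{W}(\beta,\alpha)$.

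For part (2) the inner limit is the delicate point. For fixed $n$ the sequence $m\mapsto\omega_{m}(E_{n}xE_{n})$ is nonincreasing (as $\omega_{m+1}|_{\mathcal{A}_{m}}\le\omega_{m}$ and $E_{n}xE_{n}\in\mathcal{A}_{n}\subseteq\mathcal{A}_{m}$) and bounded below, so $\omega^{(n)}(x):=\lim_{m}\omega_{m}(E_{n}xE_{n})$ exists; crucially, $\omega_{m}|_{\mathcal{A}_{n}}\le\omega_{n}$ forces the uniform bound $\omega_{m}(E_{n}xE_{n})\le\lVert\omega_{n}\rVert\,\lVert x\rVert$, so $\omega^{(n)}$ extends to a bounded positive functional and is in particular continuous. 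The centralizer inequality makes $n\mapsto\omega^{(n)}(x)$ nondecreasing, so $\omega=\sup_{n}\omega^{(n)}$ in \eqref{eaftag} is a lower semi-continuous weight. One then checks $\alpha$-invariance, verifies $\omega(E_{k}^{2})=\inf_{m}\omega_{m}(E_{k}^{2})<\infty$, notes that \eqref{ehj2} is automatic since $\omega(E_{N}xE_{N})=\omega^{(N)}(x)$, and obtains \eqref{ehj} from the very same application of the KMS condition for $\omega_{k}$, now passing to the decreasing limit in $k$. Lemma \ref{lhj} again gives $\omega\in\mathcal{W}(\beta,\alpha)$.

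The main obstacle, and the point I would be most careful about, is the centralizer inequality together with its consequence that the defining limits are monotone: without it one cannot guarantee that \eqref{evoks} and \eqref{eaftag} converge, nor that $\omega$ dominates its own $E_{N}$-compressions as \eqref{ehj2} requires. In part (2) the secondary subtlety is lower semi-continuity, because the construction passes through \emph{upper} semi-continuous decreasing inner limits; this is rescued only by the uniform norm bounds coming from $\omega_{m+1}|_{\mathcal{A}_{m}}\le\omega_{m}$, which promote each $\omega^{(n)}$ to a genuine bounded functional so that the outer supremum is lower semi-continuous.
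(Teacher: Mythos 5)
Your proposal is correct and follows the paper's proof in all essentials: the paper's argument also turns on the compression inequality $\omega_{m}(E_{n}yE_{n})\leq\omega_{m}(y)$ (its inequality \eqref{emn}, proved there directly by applying the KMS identity of Definition \ref{dweight} to $yE_{n}$ for $y$ analytic and then using continuity of the bounded weight $\omega_{m}$, rather than via the modular extension or a centralizer argument), uses it to make the limits in \eqref{evoks} and \eqref{eaftag} monotone, and then verifies the hypotheses of Lemma \ref{lhj} exactly as you do. The only genuine local difference is in part (2), where your observation that each inner limit $\omega^{(n)}=\lim_{m}\omega_{m}(E_{n}\cdot E_{n})$ is a bounded positive, hence continuous, linear functional yields lower semi-continuity of $\omega=\sup_{n}\omega^{(n)}$ somewhat more cleanly than the paper's explicit $\varepsilon$-estimate, which requires a separate case when $\omega_{n}(E_{n}^{2})=0$.
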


\begin{proof}
Let us first observe that when $\omega_{m}\in \mathcal{W}_{m}^{\beta}$ and $n<m$ then 
\begin{equation} \label{emn}
\omega_{m}(E_{n}yE_{n})\leq \omega_{m}(y) \text{ for all } y\in (\mathcal{A}_{m})_{+} \ .
\end{equation}
To see this take a $y\in \mathcal{A}_{m}$ analytic for $\alpha^{m}$. Then
\begin{align*}
\omega_{m}(E_{n}y^{*}yE_{n})&=\omega_{m}(\alpha^{m}_{-i\beta/2}(y)E_{n}^{2}\alpha^{m}_{-i\beta/2}(y)^{*}) \leq \omega_{m}(\alpha^{m}_{-i\beta/2}(y)\alpha^{m}_{-i\beta/2}(y)^{*}) \\
&=\omega_{m}(y^{*}y) 
\end{align*}
and we obtain the formula since $\omega_{m}$ is continuous.

To prove \eqref{lembet1}, fix a sequence $\{\omega_{n}\}_{n=1}^{\infty}$ satisfying the conditions in \eqref{lembet1}. We use \eqref{emn} with $m=n+1$ to get
\begin{equation*}
\omega_{n}(E_{n}xE_{n})\leq\omega_{n+1}(E_{n}xE_{n})= \omega_{n+1}(E_{n}E_{n+1}xE_{n+1}E_{n}) \leq \omega_{n+1}(E_{n+1}xE_{n+1})
\end{equation*}
for all $x\in \mathcal{A}_{+}$, and hence \eqref{evoks} defines a map $\omega: \mathcal{A}_{+} \to [0, \infty]$. It is straightforward to verify that $\omega$ is a lower semi-continuous weight with $\omega\circ \alpha_{t}=\omega$ for all $t\in \mathbb{R}$, and \eqref{ebegr} implies that $\omega(E_{n}^{2})<\infty$ for all $n\in \mathbb{N}$. Using \eqref{emn} for $y=E_{m}xE_{m}$ for some $x\in \mathcal{A}_{+}$ we get that $\omega_{m}(E_{n}xE_{n})\leq \omega_{m}(E_{m}xE_{m})$ for $n<m$, and hence $\omega(E_{n}xE_{n})\leq \omega(x)$ for all $x\in \mathcal{A}_{+}$. For each $n\in \mathbb{N}$ we have
\begin{equation*}
\omega_{n}(E_{n}xE_{n})\leq \lim_{m\to \infty}\omega_{m}(E_{n}xE_{n})=\lim_{m\to \infty}\omega_{m}(E_{m}E_{n}xE_{n}E_{m})= \omega(E_{n}xE_{n})\leq \omega(x) \ , 
\end{equation*}
and taking the limit $n\to \infty$, this inequality implies that
\begin{equation*}
\lim_{n\to \infty}\omega(E_{n}xE_{n})=\omega(x) \text{ for all }x\in \mathcal{A}_{+}.
\end{equation*}
For any $n,m\in \mathbb{N}$ and $a\in D(\alpha_{-i\beta /2})$ then
\begin{equation*}
\omega_{k}(E_{n}a^{*}E_{m}^{2}aE_{n}) = \omega_{k}(E_{m}\alpha_{-i\beta/2}(a)E_{n}^{2}\alpha_{-i\beta/2}(a)^{*}E_{m})
\end{equation*}
for all $k\geq n+m$, which implies that
\begin{equation*}
\omega(E_{n}a^{*}E_{m}^{2}aE_{n}) = \omega(E_{m}\alpha_{-i\beta/2}(a)E_{n}^{2}\alpha_{-i\beta/2}(a)^{*}E_{m}) \ ,
\end{equation*}
and hence $\omega \in \mathcal{W}(\beta , \alpha)$ by Lemma \ref{lhj}.

To prove \eqref{lembet2}, fix a sequence $\{\omega_{n}\}_{n=1}^{\infty}$ satisfying the condition in \eqref{lembet2}. Let us first argue that \eqref{eaftag} is well-defined. For each $x\in \mathcal{A}_{+}$ then $\{ \omega_{m}(E_{n} x E_{n}) \}_{m\geq n}$ is a decreasing sequence of non-negative numbers, so it converges to some number in $[0, \infty[$. For each $n\in \mathbb{N}$ and $m> n$ then using \eqref{emn} on $y=E_{n+1}xE_{n+1}$ implies that $\omega_{m}(E_{n} x E_{n})\leq \omega_{m}(E_{n+1} x E_{n+1})$, so
\begin{equation*}
\lim_{m\to \infty}\omega_{m}(E_{n} x E_{n})\leq \lim_{m\to \infty}\omega_{m}(E_{n+1} x E_{n+1}) \ ,
\end{equation*}
and it follows that the formula \eqref{eaftag} defines a weight $\omega: \mathcal{A}_{+}\to [0, \infty]$. To see that $\omega$ is lower semi-continuous choose $\{x_{k}\}_{k\in \mathbb{N}}\subseteq \A_{+}$ with $x_{k}\to x$ and $\omega(x_{k})\leq \lambda$ for all $k$ and a $\lambda >0$. If $\omega_{n}(E_{n}^{2})=0$ for arbitrarily big $n$ then $\omega=0$ and hence it is lower semi-continuous, so we can assume that there exists a $N\in \mathbb{N}$ with $\omega_{n}(E_{n}^{2})>0$ for all $n\geq N$. For each $n \geq N$ and $\varepsilon >0$ we have for $k\in \mathbb{N}$ sufficiently big that $\lVert x-x_{k}\rVert \leq \varepsilon \cdot \omega_{n}(E_{n}^{2})^{-1}$, from which it follows that
\begin{equation*}
|\omega_{m}( E_{n}(x-x_{k})E_{n})|\leq \omega_{m}(E_{n}^{2}) \frac{\varepsilon }{\omega_{n}(E_{n}^{2})}\leq \varepsilon
\end{equation*}
for $m \geq n$, and hence $\lim_{m\to \infty}\omega_{m}( E_{n}xE_{n})\leq \lambda+\varepsilon$. Since $n$ and $\varepsilon$ were arbitrary, this proves lower semi-continuity. Since 
\begin{equation}\label{e180220}
\omega(E_{k}xE_{k})=\lim_{n\to \infty} \lim_{m\to \infty} \omega_{m}(E_{n}E_{k}xE_{k}E_{n})
=\lim_{m\to \infty} \omega_{m}(E_{k}xE_{k}) 
\end{equation}
we both get that $\omega(E_{k}^{2})<\infty$ for all $k$ and that $\omega(x)=\lim_{k}\omega(E_{k}xE_{k})$ for all $x\in \mathcal{A}_{+}$. For $a\in D(\alpha_{-i\beta/2})$ we get by \eqref{e180220} that for all $l,k \in  \mathbb{N}$ then
\begin{align*}
\omega(E_{k}a^{*}E_{l}^{2}aE_{k})&=\lim_{m\to \infty} \omega_{m}(E_{k}a^{*}E_{l}^{2}aE_{k})
= \lim_{m\to \infty}\omega_{m}(E_{l}\alpha_{-i\beta/2}(a)E_{k}^{2}\alpha_{-i\beta/2}(a)^{*}E_{l}) \\
&=\omega(E_{l}\alpha_{-i\beta/2}(a)E_{k}^{2}\alpha_{-i\beta/2}(a)^{*}E_{l}) \ .
\end{align*}
Since it is straightforward to verify that $\omega \circ \alpha_{t}=\omega$, Lemma \ref{lhj} implies that $\omega$ is a $\beta$-KMS weight for $\alpha$.
\end{proof}

With Lemma \ref{lem51} we can now describe the KMS weights as a set.

\begin{prop}\label{p45}
The following map is a bijection 
\begin{equation*}
\mathcal{W}(\beta, \alpha) \ni \psi \to (\psi\circ \iota_{n, \infty})_{n=1}^{\infty} \in \varprojlim_{n\in \mathbb{N}} \mathcal{W}^{\beta}_{n} .
\end{equation*}
\end{prop}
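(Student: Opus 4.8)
The plan is to show the map is well-defined with image in $\varprojlim_{n}\mathcal{W}_{n}^{\beta}$, and then to treat injectivity and surjectivity separately, letting the construction in Lemma~\ref{lem51} carry the surjectivity. First I would check well-definedness. Fix $\psi\in\mathcal{W}(\beta,\alpha)$. Each $E_{n}$ lies in $\mathcal{A}^{\alpha}$, hence is analytic with $\alpha_{-i\beta/2}(E_{n})=E_{n}$, and since $E_{n}E_{n+1}=E_{n}$ we have $E_{n}\in F(\mathcal{A})$; thus Proposition~\ref{prop31} gives $E_{n}\in\mathcal{N}_{\psi}$, i.e.\ $\psi(E_{n}^{2})<\infty$. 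The relations $E_{n+1}E_{n}=E_{n}E_{n+1}=E_{n}$ yield $x=E_{n+1}xE_{n+1}$ for every $x\in\mathcal{A}_{n}=\overline{E_{n}\mathcal{A}E_{n}}$, whence $x\le\lVert x\rVert E_{n+1}^{2}$ and $\psi(x)\le\lVert x\rVert\psi(E_{n+1}^{2})<\infty$. So $\psi|_{\mathcal{A}_{n}}$ is a bounded functional; being the restriction of a $\beta$-KMS weight along the $\alpha$-equivariant inclusion $\iota_{n,\infty}$ (so that analytic continuations agree, giving $D(\alpha^{n}_{-i\beta/2})\subseteq D(\alpha_{-i\beta/2})$) it is a $\beta$-KMS weight for $(\mathcal{A}_{n},\alpha^{n})$, i.e.\ $\psi\circ\iota_{n,\infty}\in\mathcal{W}_{n}^{\beta}$. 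Compatibility $(\psi\circ\iota_{n+1,\infty})\circ\iota_{n}=\psi\circ\iota_{n,\infty}$ is immediate, so the family lies in $\varprojlim_{n}\mathcal{W}_{n}^{\beta}$.

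The crux is the approximation formula $\psi(x)=\lim_{n}\psi(E_{n}xE_{n})$ for every $x\in\mathcal{A}_{+}$, which I would prove in two halves. Since $\{E_{n}\}$ is an approximate identity, $E_{n}xE_{n}\to x$ in norm, so lower semi-continuity of $\psi$ gives $\psi(x)\le\liminf_{n}\psi(E_{n}xE_{n})$. For the reverse inequality I pass to the GNS level: by Theorem~\ref{tgns} there is a normal faithful semi-finite extension $\tilde{\psi}$ on $M=\pi_{\psi}(\mathcal{A})''$ with modular group $\sigma_{t}=\tilde{\alpha}_{-\beta t}$, and since $E_{n}\in\mathcal{A}^{\alpha}$ the contraction $b=\pi_{\psi}(E_{n})$ satisfies $\sigma_{t}(b)=b$, i.e.\ $b$ lies in the centralizer $M^{\sigma}$. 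The naive operator bound is unavailable, as conjugation by a contraction is not order-decreasing, so instead I use the trace property of $\tilde{\psi}$ on $M^{\sigma}$: for $c\in M^{\sigma}_{+}$ one has $\tilde{\psi}(xc)=\tilde{\psi}(c^{1/2}xc^{1/2})\ge0$, and therefore, assuming $\psi(x)<\infty$ (the case $\psi(x)=\infty$ being trivial), $\tilde{\psi}(bxb)=\tilde{\psi}(xb^{2})=\tilde{\psi}(x)-\tilde{\psi}\big((1-b^{2})^{1/2}x(1-b^{2})^{1/2}\big)\le\tilde{\psi}(x)$. Reading this through $\psi=\tilde{\psi}\circ\pi_{\psi}$ gives $\psi(E_{n}xE_{n})\le\psi(x)$, completing the formula. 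This is the step I expect to be the main obstacle, precisely because $\psi(E_{n}xE_{n})\le\psi(x)$ is a genuine consequence of the KMS structure rather than of operator monotonicity.

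With the formula in hand, injectivity is immediate: each $E_{n}xE_{n}\in\mathcal{A}_{n}$, so $\psi(E_{n}xE_{n})=(\psi\circ\iota_{n,\infty})(E_{n}xE_{n})$ depends only on the image family, and hence so does $\psi(x)=\lim_{n}\psi(E_{n}xE_{n})$. For surjectivity, take $(\omega_{n})\in\varprojlim_{n}\mathcal{W}_{n}^{\beta}$, so $\omega_{n+1}|_{\mathcal{A}_{n}}=\omega_{n}$ and in particular $\omega_{n+1}|_{\mathcal{A}_{n}}\ge\omega_{n}$. Since $E_{m}^{2}=E_{m}E_{m+1}E_{m}\in\mathcal{A}_{m}$, compatibility gives $\omega_{n}(E_{m}^{2})=\omega_{m}(E_{m}^{2})$ for all $n\ge m$, so the bound \eqref{ebegr} holds with $K_{m}=\omega_{m}(E_{m}^{2})$; Lemma~\ref{lem51}\eqref{lembet1} then produces $\omega\in\mathcal{W}(\beta,\alpha)$ defined by \eqref{evoks}. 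Finally I would verify $\omega\circ\iota_{n,\infty}=\omega_{n}$: for $x\in\mathcal{A}_{n}$ and $k>n$ the relations $E_{k}E_{n+1}=E_{n+1}$ and $x=E_{n+1}xE_{n+1}$ give $E_{k}xE_{k}=x$, so $\omega(x)=\lim_{k}\omega_{k}(E_{k}xE_{k})=\lim_{k}\omega_{k}(x)=\omega_{n}(x)$, the last equality by compatibility. This shows the map is onto and completes the proof.
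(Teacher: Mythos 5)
Your proof is correct, and the well-definedness and surjectivity parts coincide with the paper's (same use of Proposition~\ref{prop31} to get $\psi(E_{n+1}^{2})<\infty$, same appeal to Lemma~\ref{lem51}\eqref{lembet1}, with the verification $\omega\circ\iota_{n,\infty}=\omega_{n}$ spelled out where the paper leaves it as a one-liner). Where you genuinely diverge is injectivity. The paper avoids the cut-down inequality $\psi(E_{n}xE_{n})\le\psi(x)$ for the \emph{unbounded} weight entirely: it reduces to an analytic $x$ by the Gaussian smoothing \eqref{eLambda}, uses that $x^{*}E_{n}^{2}x\le x^{*}x$ together with lower semi-continuity to get $\psi_{i}(x^{*}E_{n}^{2}x)\to\psi_{i}(x^{*}x)$ (here only operator monotonicity is needed, since $E_{n}^{2}\le 1$ sits in the middle), and then flips $x^{*}E_{n}^{2}x$ into the element $E_{n}\alpha_{-i\beta/2}(x)\alpha_{-i\beta/2}(x)^{*}E_{n}\in\mathcal{A}_{n}$ via the KMS condition. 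You instead prove the approximation formula $\psi(x)=\lim_{n}\psi(E_{n}xE_{n})$ up front — which the paper only obtains afterwards, as Corollary~\ref{corend}, as a byproduct of the bijection — and you correctly identify that $\psi(E_{n}xE_{n})\le\psi(x)$ is the non-trivial point, resolving it with the centralizer of $\tilde{\psi}$. That argument is sound (for $\psi\neq 0$; the zero weight is trivial and must be excluded since Theorem~\ref{tgns} assumes $\psi\neq0$), though your domain bookkeeping for $\tilde{\psi}(xb^{2})$ is a little breezy; the cleanest justification is that right multiplication by the positive contraction $b=\pi_{\psi}(E_{n})$, fixed by the modular group, is implemented on the GNS space of $\tilde{\psi}$ by the contraction $JbJ$, so $\tilde{\psi}(bxb)=\lVert JbJ\Lambda(x^{1/2})\rVert^{2}\le\tilde{\psi}(x)$. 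The trade-off: your route buys a cleaner, non-contradiction injectivity proof and delivers Corollary~\ref{corend} for free, at the cost of invoking von Neumann--algebraic modular theory where the paper stays at the $C^{*}$-level with elementary monotonicity.
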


\begin{proof}
For $\psi\in \mathcal{W}(\beta, \alpha)$ and $n\in \mathbb{N}$ then $\psi(E_{n+1}^{2})<\infty$ by Proposition \ref{prop31}, so for $y\in (\mathcal{A}_{n})_{+}$ then
\begin{equation*}
\psi(y)=\psi(E_{n+1}yE_{n+1})\leq \lVert y \rVert  \psi(E_{n+1}^{2})
\end{equation*}
which implies that the map is well defined. Assume for a contradiction that injectivity of the map fails for $\psi_{1}, \psi_{2}\in \mathcal{W}(\beta, \alpha)$, and assume that $x\in \mathcal{N}_{\psi_{1}}$ with $\psi_{2}(x^{*}x)\neq \psi_{1}(x^{*}x)$, and fix a GNS representation $(H_{i}, \pi_{i}, \Lambda_{i})$ for $\psi_{i}$ for $i=1,2$. If $\psi_{2}(x^{*}x)<\infty$ we get by Definition \ref{dGNS} and \eqref{eLambda} that
\begin{equation}\label{e20feb}
\psi_{i}(x(m)^{*}x(m))=\lVert \Lambda_{i}(x(m)) \rVert^{2}\to \lVert \Lambda_{i}(x) \rVert^{2}=\psi_{i}(x^{*}x) \text{ for } m\to \infty
\end{equation}
for $i=1,2$, and hence we can assume that there is an analytic $x$ with $\psi_{2}(x^{*}x)\neq \psi_{1}(x^{*}x)<\infty$. If $\psi_{2}(x^{*}x)=\infty$ the calculation in \eqref{e20feb} still holds for $i=1$, while by lower semi-continuity then $\psi_{2}(x(m)^{*}x(m))$ is unbounded in $m$, implying that we also can choose an analytic $x$ with $\psi_{2}(x^{*}x)\neq \psi_{1}(x^{*}x)<\infty$. So in both cases we can fix an analytic $x$ with $\psi_{2}(x^{*}x)\neq \psi_{1}(x^{*}x)$. By lower-semi continuity we have that $\lim_{n}\psi_{i}(x^{*}E_{n}^{2}x)=\psi_{i}(x^{*}x)$ for $i=1,2$. Hence there is a $n\in \mathbb{N}$ with 
\begin{equation*}
 \psi_{1}(x^{*}E_{n}^{2}x)\neq \psi_{2}(x^{*}E_{n}^{2}x)
\end{equation*}
which is a contradiction since $\psi_{i}(E_{n} \alpha_{-i\beta/2}(x)\alpha_{-i\beta/2}(x)^{*}E_{n})= \psi_{i}(x^{*}E_{n}^{2}x)$ for $i=1,2$. In conclusion $\mathcal{N}_{\psi_{1}}\subseteq \mathcal{N}_{\psi_{2}}$ and $\psi_{1}(x^{*}x)=\psi_{2}(x^{*}x)$ for $x\in \mathcal{N}_{\psi_{1}}$, and interchanging $\psi_{1}$ and $\psi_{2}$ in the above argument we get that $\psi_{1}=\psi_{2}$.

To show that the map is surjective, let $(\psi_{n})_{n\in \mathbb{N}}\in \varprojlim_{n\in \mathbb{N}} \mathcal{W}_{n}^{\beta}$. Since $\psi_{n+1}|_{\A_{n}}=\psi_{n}$ for all $n\in \mathbb{N}$ then \eqref{lembet1} in Lemma \ref{lem51} implies that setting $\psi(x):=\lim_{n\to \infty}\psi_{n}(E_{n}xE_{n})$ for each $x\in \mathcal{A}_{+}$ defines a $\beta$-KMS weights. Since $\psi$ maps to $(\psi_{n})_{n\in \mathbb{N}}$, this proves surjectivity.
\end{proof}

From the proof of surjectivity in Proposition \ref{p45} we get the following Corollary.

\begin{cor}\label{corend}
If $\omega \in \mathcal{W}(\beta, \alpha)$ then $\omega(x)=\lim_{n}\omega(E_{n}xE_{n})$ for all $x\in \mathcal{A}_{+}$.
\end{cor}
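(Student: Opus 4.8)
The plan is to recognise $\omega$ as the weight reconstructed from its own image under the bijection of Proposition \ref{p45}. First I would set $\omega_{n} := \omega \circ \iota_{n,\infty}$, the restriction of $\omega$ to $\mathcal{A}_{n}$, for each $n\in\mathbb{N}$. The well-definedness argument in Proposition \ref{p45} shows that each $\omega_{n}$ is bounded (indeed $\omega_{n}(y)\le \lVert y\rVert\,\omega(E_{n+1}^{2})$, with $\omega(E_{n+1}^{2})<\infty$ by Proposition \ref{prop31}), and since $\mathcal{A}_{n}$ is $\alpha$-invariant because $E_{n}\in\mathcal{A}^{\alpha}$, the restriction $\omega_{n}$ is again a $\beta$-KMS weight. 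Thus $\omega_{n}\in\mathcal{W}^{\beta}_{n}$ and $(\omega_{n})_{n}\in\varprojlim_{n\in\mathbb{N}}\mathcal{W}^{\beta}_{n}$, and by the very definition of the map in Proposition \ref{p45} it is $\omega$ that maps to $(\omega_{n})_{n}$.

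Next I would invoke the surjectivity half of Proposition \ref{p45} applied to the tuple $(\omega_{n})_{n}$. The compatibility $\omega_{n+1}|_{\mathcal{A}_{n}}=\omega_{n}$ together with the bound $\omega_{n}(E_{m}^{2})=\omega(E_{m}^{2})<\infty$ for $n\ge m$ (finiteness again from Proposition \ref{prop31}) supplies the hypotheses of \eqref{lembet1} in Lemma \ref{lem51}, so that the formula $\psi(x):=\lim_{n\to\infty}\omega_{n}(E_{n}xE_{n})$ for $x\in\mathcal{A}_{+}$ defines a $\beta$-KMS weight $\psi$ which likewise maps to $(\omega_{n})_{n}$. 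The one point to record here is that $E_{n}xE_{n}\in E_{n}\mathcal{A}E_{n}\subseteq\mathcal{A}_{n}$, whence $\omega_{n}(E_{n}xE_{n})=\omega(E_{n}xE_{n})$ and therefore $\psi(x)=\lim_{n\to\infty}\omega(E_{n}xE_{n})$.

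Finally, since both $\psi$ and $\omega$ map to $(\omega_{n})_{n}$ under the bijection of Proposition \ref{p45}, injectivity forces $\psi=\omega$, which is exactly the asserted identity $\omega(x)=\lim_{n\to\infty}\omega(E_{n}xE_{n})$ for all $x\in\mathcal{A}_{+}$. I do not expect any genuine obstacle: the entire content is already packaged in Proposition \ref{p45}, and the only things that need checking are the routine facts that the restrictions $\omega_{n}$ land in $\mathcal{W}^{\beta}_{n}$ (with finiteness coming from Proposition \ref{prop31}) and that $E_{n}xE_{n}$ lies in $\mathcal{A}_{n}$, so that the reconstructing limit is literally $\lim_{n\to\infty}\omega(E_{n}xE_{n})$.
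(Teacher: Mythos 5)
Your proposal is correct and is essentially the paper's own argument: the paper derives Corollary \ref{corend} precisely by feeding the tuple $(\omega\circ\iota_{n,\infty})_{n}$ back through the surjectivity construction of Proposition \ref{p45} (i.e.\ Lemma \ref{lem51}\eqref{lembet1}) and invoking injectivity. No gaps.
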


In light of Proposition \ref{p45} we will from now on identify $\mathcal{W}(\beta, \alpha)$ with $\varprojlim_{n\in \mathbb{N}} \mathcal{W}^{\beta}_{n}$, and interchangeably identify any $\beta$-KMS weight $\psi$ with its image in $\varprojlim_{n\in \mathbb{N}} \mathcal{W}^{\beta}_{n}$, which we will denote by $(\psi_{n})_{n=1}^{\infty}$.

We will now prove that $\varprojlim_{n\in \mathbb{N}} \mathcal{W}^{\beta}_{n}$ as a subset of the vector space $\varprojlim_{n\in \mathbb{N}}  \mathcal{A}_{n}^{*}$ satisfies some of the same properties that the set of $\beta$-KMS weights has on a unital $C^{*}$-algebra, c.f. Theorem 5.3.30 in \cite{BR}. For this, let us introduce the notion of an \emph{extremal KMS weight}. We say that a $\beta$-KMS weight $\psi$ in $\mathcal{W}(\beta, \alpha)$ is extremal when any $\omega, \phi \in \mathcal{W}(\beta, \alpha)$ with $\psi=\omega+\phi$ must satisfy that $\phi, \omega \in \{ \lambda \psi \ | \ \lambda \geq 0 \}$. We will now describe the set $\varprojlim_{n\in \mathbb{N}} \mathcal{W}^{\beta}_{n}$. 

\begin{thm}\label{thm46}
Consider $\varprojlim_{n\in \mathbb{N}} \mathcal{W}^{\beta}_{n}$ as a subset of the locally convex topological vector space  $\varprojlim_{n\in \mathbb{N}}  \mathcal{A}_{n}^{*}$. Then
\begin{enumerate}
\item \label{it1}$\varprojlim_{n\in \mathbb{N}} \mathcal{W}^{\beta}_{n}$ is a closed convex cone in $\varprojlim_{n\in \mathbb{N}}  \mathcal{A}_{n}^{*}$.
\item \label{it2}For $\beta \neq 0$ an element $\psi$ in $\varprojlim_{n\in \mathbb{N}} \mathcal{W}^{\beta}_{n}$ is extremal if and only if the von Neumann algebra $\pi_{\psi}(\mathcal{A})''$ is a factor.
\item \label{it3}$\varprojlim_{n\in \mathbb{N}} \mathcal{W}^{\beta}_{n}$ is a lattice in $\varprojlim_{n\in \mathbb{N}}  \mathcal{A}_{n}^{*}$ under the order
\begin{equation*}
\psi \leq \phi \iff \phi-\psi \in \varprojlim_{n\in \mathbb{N}} \mathcal{W}^{\beta}_{n}.
\end{equation*}
\end{enumerate}
\end{thm}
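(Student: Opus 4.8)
My plan is to reduce all three statements to a Radon--Nikodym description of the $\beta$-KMS weights dominated by a fixed one. Statement~\ref{it1} is essentially bookkeeping: each $\mathcal{W}^{\beta}_{n}$ is a convex cone in $\mathcal{A}^{*}_{n}$, since sums and nonnegative multiples of bounded $\beta$-KMS weights are again bounded $\beta$-KMS weights, and it is weak$^{*}$-closed as already recorded in the text. As the connecting maps $\pi_{n}$ are linear and continuous, the compatible families making up $\varprojlim_{n}\mathcal{W}^{\beta}_{n}$ form a convex subcone, closed in $\varprojlim_{n}\mathcal{A}^{*}_{n}$; so I would dispatch \ref{it1} first and move on.

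The engine for \ref{it2} and \ref{it3} is the following lemma, which I would isolate and prove. Fix a non-zero $\psi\in\mathcal{W}(\beta,\alpha)$, write $M=\pi_{\psi}(\mathcal{A})''$, let $\tilde{\psi}$ be the normal faithful semi-finite extension and $\tilde{\alpha}_{t}=\operatorname{Ad}u_{t}$ as in Theorem~\ref{tgns}, so the modular group of $\tilde{\psi}$ is $\sigma_{t}=\tilde{\alpha}_{-\beta t}$. Writing $\tilde{\psi}_{h}(x)=\tilde{\psi}(h^{1/2}xh^{1/2})$, the claim is that $h\mapsto \tilde{\psi}_{h}\circ\pi_{\psi}$ is an affine order isomorphism from $\{h\in Z(M):0\le h\le 1\}$ onto $\{\omega\in\mathcal{W}(\beta,\alpha):\omega\le\psi\}$, where $\le$ is the order of the theorem. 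The easy direction is that for central $h$ with $0\le h\le 1$ the Connes cocycle $[D\tilde{\psi}_{h}:D\tilde{\psi}]_{t}=h^{it}$ is central, whence $\sigma^{\tilde{\psi}_{h}}_{t}=\operatorname{Ad}(h^{it})\circ\sigma_{t}=\sigma_{t}$; thus $\tilde{\psi}_{h}$ is a $\beta$-KMS weight for $\tilde{\alpha}$, $\tilde{\psi}_{h}\circ\pi_{\psi}\in\mathcal{W}(\beta,\alpha)$, and $\tilde{\psi}_{1-h}$ witnesses $\tilde{\psi}_{h}\circ\pi_{\psi}\le\psi$. Injectivity and the reverse order implication follow from faithfulness of $\tilde{\psi}$.

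The hard part, and the main obstacle, is the converse: producing the central $h$ from a dominated weight $\omega\le\psi$. Here I would first realize $\omega$ as a normal semi-finite weight on $M$: from $\omega\le\psi$ one gets $\mathcal{N}_{\psi}\subseteq\mathcal{N}_{\omega}$ and $\lVert\Lambda_{\omega}(a)\rVert\le\lVert\Lambda_{\psi}(a)\rVert$, so $\pi_{\omega}$ is quasi-contained in $\pi_{\psi}$ and there is a unique normal $\tilde{\omega}\le\tilde{\psi}$ on $M$ with $\tilde{\omega}\circ\pi_{\psi}=\omega$. By $\sigma$-weak density of $\pi_{\psi}(\mathcal{A})$ in $M$ and normality, the $\beta$-KMS condition for $\omega$ upgrades to the $\beta$-KMS condition for $\tilde{\omega}$ with respect to $\tilde{\alpha}$, so uniqueness of the modular group gives $\sigma^{\tilde{\omega}}_{t}=\tilde{\alpha}_{-\beta t}=\sigma_{t}$. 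In particular $\tilde{\omega}$ is $\sigma$-invariant, and the Pedersen--Takesaki Radon--Nikodym theorem supplies a positive self-adjoint $h$ affiliated with the centralizer $M^{\sigma}$ with $\tilde{\omega}=\tilde{\psi}_{h}$; the bound $\tilde{\omega}\le\tilde{\psi}$ forces $0\le h\le 1$. Finally $\sigma^{\tilde{\omega}}_{t}=\operatorname{Ad}(h^{it})\circ\sigma_{t}$ combined with $\sigma^{\tilde{\omega}}=\sigma$ gives $h^{it}\sigma_{t}(x)h^{-it}=\sigma_{t}(x)$ for all $x\in M$; since $\beta\neq 0$ the map $\sigma_{t}=\tilde{\alpha}_{-\beta t}$ is onto $M$ for $t\neq 0$, so $h^{it}\in M\cap M'=Z(M)$ for all $t$ and $h\in Z(M)$. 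This last step is exactly where $\beta\neq 0$ is indispensable, since for $\beta=0$ one only lands in the possibly larger fixed-point algebra $M^{\tilde{\alpha}}$. Statement~\ref{it2} is then immediate: $\psi$ is non-extremal iff there is $h\in Z(M)$ with $0\le h\le 1$ and $h\notin\mathbb{C}1$, iff $Z(M)\neq\mathbb{C}1$, iff $M$ is not a factor; for the non-factor direction I would take a non-trivial spectral projection $p$ of such an $h$ and use faithfulness of $\tilde{\psi}$ on $pMp$ and $(1-p)M(1-p)$ to see that $\tilde{\psi}_{p}$ is no scalar multiple of $\tilde{\psi}$.

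For \ref{it3}, given $\psi,\phi\in\mathcal{W}(\beta,\alpha)$ I would set $\rho=\psi+\phi$, so $\psi,\phi\le\rho$, and apply the lemma to $\rho$: the order interval $\{\omega:\omega\le\rho\}$ is affinely order-isomorphic to $\{h\in Z(M_{\rho}):0\le h\le 1\}$, which is a Dedekind-complete lattice because $Z(M_{\rho})$ is a commutative von Neumann algebra. Let $h_{\psi},h_{\phi}$ be the central elements corresponding to $\psi,\phi$. Any lower bound of $\{\psi,\phi\}$ lies below $\psi\le\rho$, hence inside this interval, so the infimum computed there, corresponding to $h_{\psi}\wedge h_{\phi}$, is the infimum in the whole cone; in particular infima of pairs always exist. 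For the supremum, $\rho$ is itself an upper bound of $\{\psi,\phi\}$, and for an arbitrary upper bound $\chi$ the element $\chi\wedge\rho$ (which exists by the infimum case) is again an upper bound lying in the interval. Hence the element $s$ corresponding to $h_{\psi}\vee h_{\phi}$, which is the least upper bound inside the interval, satisfies $s\le\chi\wedge\rho\le\chi$ for every upper bound $\chi$, so $s=\psi\vee\phi$ is the least upper bound in the whole cone. Transporting $\wedge$ and $\vee$ from $Z(M_{\rho})$ thus exhibits $\varprojlim_{n}\mathcal{W}^{\beta}_{n}$ as a lattice, completing the proof.
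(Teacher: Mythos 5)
Your strategy is viable but genuinely different from the paper's. The paper never leaves the bounded world: it restricts everything to the corners $\mathcal{A}_{n}=\overline{E_{n}\mathcal{A}E_{n}}$, where the dominated weights are bounded and the classical results for KMS \emph{states} (Theorem 2.3.19 and Proposition 5.3.29 in \cite{BR}) produce operators $T_{n}\in \pi|_{n}(\mathcal{A}_{n})''\cap\pi|_{n}(\mathcal{A}_{n})'$; the real work is then the patching argument showing $T_{n+1}|_{H_{n}}=T_{n}$ and that the resulting $T$ lands in $\pi_{\psi}(\mathcal{A})''\cap\pi_{\psi}(\mathcal{A})'$, and for the lattice property the paper similarly forms $\omega_{n}=\psi_{n}\wedge\phi_{n}$, $\eta_{n}=\psi_{n}\vee\phi_{n}$ in each $\mathcal{W}^{\beta}_{n}$ and glues them with Lemma \ref{lem51}. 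You instead work globally on $M=\pi_{\psi}(\mathcal{A})''$ with the unbounded weight and the Pedersen--Takesaki Radon--Nikodym theorem. Your route is conceptually cleaner (it gives the order isomorphism of the full interval $[0,\psi]$ with the unit interval of $Z(M)$ in one stroke, from which \eqref{it2} and \eqref{it3} both fall out, and your reduction of global infima/suprema to the interval $[0,\psi+\phi]$ is correct), but it front-loads exactly the analytic difficulties that the paper's corner-wise argument is engineered to avoid.

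Two of your steps are asserted in one line but carry essentially all of that difficulty, and as written they are gaps. First, the existence of a normal semi-finite $\tilde{\omega}\le\tilde{\psi}$ on $M$ with $\tilde{\omega}\circ\pi_{\psi}=\omega$: for states this is \cite{BR} Theorem 2.3.19, but for weights the quasi-containment argument needs real work -- you must show the contraction $T\Lambda_{\psi}(a)=\Lambda_{\omega}(a)$ has dense range (i.e.\ that $\Lambda_{\omega}(\mathcal{N}_{\psi})$ is dense in $H_{\omega}$, which uses that the $E_{n}$ lie in $\mathcal{A}^{\alpha}$), that $\ker T$ is $M$-invariant so that $x\mapsto \text{``}TxT^{-1}\text{''}$ defines a normal representation of $M$, and that the resulting $\tilde{\omega}$ is semi-finite and dominated by $\tilde{\psi}$ on all of $M_{+}$, not just on $\pi_{\psi}(\mathcal{A})_{+}$. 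Second, ``uniqueness of the modular group gives $\sigma^{\tilde{\omega}}=\sigma$'' presumes $\tilde{\omega}$ faithful; in general $\tilde{\omega}$ has a nontrivial support projection, it has no modular group on all of $M$, and both the upgrade of the KMS condition from $\mathcal{A}$ to $M$ (analytic continuation does not pass to $\sigma$-weak limits for free) and the cocycle identity $\sigma^{\tilde{\omega}}_{t}=\operatorname{Ad}(h^{it})\circ\sigma_{t}$ must be run on the reduced algebra $eMe$ and then reassembled. Finally, your parenthetical on why $\beta\neq 0$ matters is off: $\sigma_{t}$ is an automorphism, hence always onto; the actual point is that for $\beta\neq 0$ the dynamics $\tilde{\alpha}$ is a rescaling of the modular group and therefore fixes $Z(M)$ pointwise, so every central $h$ automatically yields an $\tilde{\alpha}$-invariant weight, whereas for $\beta=0$ the correspondence is with $Z(M)\cap M^{\tilde{\alpha}}$, which can be strictly smaller than $Z(M)$. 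None of this breaks your architecture, but each point needs a genuine argument before the proof is complete.
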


\begin{proof}
\eqref{it1} follows from the observation that $\mathcal{W}^{\beta}_{n}$ is a closed convex cone in $\mathcal{A}_{n}^{*}$ for each $n\in \mathbb{N}$. 

To prove the first implication in \eqref{it2} we follow the proof of Lemma 4.9 in \cite{T3}. Assume that $\psi$ is extremal in $\varprojlim_{n\in \mathbb{N}} \mathcal{W}^{\beta}_{n}$ with GNS representation $(H_{\psi}, \pi_{\psi}, \Lambda_{\psi})$, and fix a non-zero projection $p\in \pi_{\psi}(\mathcal{A})'' \cap \pi_{\psi}(\mathcal{A})'$. Let $\tilde{\psi}$ denote the extension of $\psi$ to $\pi_{\psi}(\mathcal{A})''$ and let $\tilde{\alpha}$ be the extension of $\alpha$ as in Theorem \ref{tgns}. Since $\{\tilde{\alpha}_{-\beta t}\}_{t\in \mathbb{R}}$ is the modular automorphism group for $\tilde{\psi}$ and $\beta\neq 0$ we get by e.g. Theorem 3.6 in \cite{PT} that $\tilde{\alpha}_{t}(p)=p$ for all $t\in \mathbb{R}$. It is straightforward to check that
\begin{equation*}
\mathcal{A}_{+}\ni a \to \tilde{\psi}(p\pi_{\psi}(a)) \ \text{ and } \ \mathcal{A}_{+}\ni a \to \tilde{\psi}((1-p)\pi_{\psi}(a))
\end{equation*}
define two $\beta$-KMS weights on $\mathcal{A}$ with sum $\psi$. By assumption this implies that there exists $s>0$ with $\tilde{\psi}(p \pi_{\psi}(\cdot) )=s\tilde{\psi}(\pi_{\psi}(\cdot))$. Choosing a sequence $\{a_{n}\}_{n\in \mathbb{N}}$ in $\mathcal{A}$ with $\pi_{\psi}(a_{n})\to p$ in the $\sigma$-weak operator topology, we see that for any $b\in \mathcal{N}_{\psi}$
\begin{align*}
\tilde{\psi}(p\pi_{\psi}(b)^{*}\pi_{\psi}(b)) &=\lim_{n}\tilde{\psi}(\pi_{\psi}(b)^{*}\pi_{\psi}(a_{n})\pi_{\psi}(b))=\lim_{n} \langle \Lambda_{\psi}(a_{n}b), \Lambda_{\psi}(b)\rangle \\
&=\langle p\Lambda_{\psi}(b), \Lambda_{\psi}(b)\rangle,
\end{align*}
and since $\Lambda_{\psi}(\mathcal{N}_{\psi})$ is dense in $H_{\psi}$ this implies that $s^{-1}p=I$. In conclusion $p\in \mathbb{C}I$ and $\pi_{\psi}(\mathcal{A})''$ is a factor. 

For the other direction in \eqref{it2}, assume that $\psi$ is an element of $\varprojlim_{n\in \mathbb{N}} \mathcal{W}^{\beta}_{n}$ with GNS representation $(H_{\psi}, \pi_{\psi}, \Lambda_{\psi})$ and that $\pi_{\psi}(\A)''$ is a factor. Assume that $\phi, \eta \in \varprojlim_{n\in \mathbb{N}} \mathcal{W}^{\beta}_{n}$ satisfy that $\phi+\eta=\psi$. Fix a $n\in \mathbb{N}$. Since $\mathcal{A}_{n} \subseteq \mathcal{N}_{\psi}$ we can define a closed subspace $H_{n}\subseteq H_{\psi}$ by
\begin{equation*}
H_{n}:=\overline{\Lambda_{\psi}(\A_{n})} \ ,
\end{equation*}
which becomes invariant under $\pi_{\psi}(\A_{n})$. Letting $\pi|_{n}(a)$ be the restriction of $\pi_{\psi}(a)$ to $H_{n}$ for all $a\in \mathcal{A}_{n}$, we get that $(H_{n}, \pi|_{n}, \Lambda_{\psi}|_{\mathcal{A}_{n}})$ is a GNS triple for the proper bounded weight $\psi_{n}$, which implies that $\psi_{n}$ extends to a normal bounded $\beta$-KMS weight $\tilde{\psi_{n}}$ on $\pi|_{n}(\mathcal{A}_{n})''$, c.f. Corollary 5.3.4 in \cite{BR}. By Theorem 2.3.19 in \cite{BR} $\phi_{n}$ also extends to a normal bounded $\beta$-KMS weight $\tilde{\phi_{n}}$ on $\pi|_{n}(\mathcal{A}_{n})''$ such that $\tilde{\phi_{n}} \leq \tilde{\psi_{n}}$. Since these weights are bounded, the proof of $(1)\Rightarrow (2)$ in Proposition 5.3.29 in \cite{BR} implies that there exists a unique positive operator $T_{n}\in \pi|_{n}(\mathcal{A}_{n})'' \cap \pi|_{n}(\mathcal{A}_{n})'$ of norm at most $1$ such that $\tilde{\phi_{n}}(x) = \tilde{\psi_{n}}(T_{n}x)$ for all $x\in \pi|_{n}(\mathcal{A}_{n})''$. This implies that we get a sequence of operators $\{T_{n}\}_{n=1}^{\infty}$ with $T_{n}$ an operator on $H_{n}$. For any $A, C\in \mathcal{A}_{n}$ and $B\in \mathcal{A}_{n+1}$ we have that $\pi_{\psi}(A)\pi_{\psi}(B)\Lambda_{\psi}(C)=\Lambda_{\psi}(ABC)\in H_{n}$, so $\pi_{\psi}(A)T_{n+1}\Lambda_{\psi}(C)\in H_{n}$. If $A\in (\mathcal{A}_{n})_{+}$ this implies
\begin{equation*}
T_{n+1}\Lambda_{\psi}(A)= \pi_{\psi}(\sqrt{A}) T_{n+1}\Lambda_{\psi}(\sqrt{A})\in H_{n} \ ,
\end{equation*}
and hence $T_{n+1} H_{n} \subseteq H_{n}$. Let $\Omega_{n}\in H_{n}$ be the cyclic vector with $\psi_{n}=\langle \pi_{\psi}(\cdot)\Omega_{n}, \Omega_{n} \rangle$ for each $n\in \mathbb{N}$ and let $\{B_{k}\}_{k=1}^{\infty} \subseteq \mathcal{A}_{n+1}$ satisfy that $\lim_{k} \pi|_{n+1}(B_{k})=T_{n+1}$, then for any $A\in \mathcal{A}_{n}$ we have $A^{*}B_{k}A\in \mathcal{A}_{n}$, so
\begin{align*}
\langle T_{n+1}\pi_{\psi}(A)\Omega_{n},& \pi_{\psi}(A)\Omega_{n} \rangle
=\lim_{k} \psi_{n}(A^{*}B_{k} A )=\lim_{k} \psi_{n+1}(A^{*}B_{k} A ) \\
&=\langle T_{n+1}\pi_{\psi}(A)\Omega_{n+1}, \pi_{\psi}(A)\Omega_{n+1} \rangle=\phi_{n+1}(A^{*}A)=\phi_{n}(A^{*}A) \\
&= \langle T_{n}\pi_{\psi}(A)\Omega_{n}, \pi_{\psi}(A)\Omega_{n} \rangle \ ,
\end{align*}
and hence $T_{n+1}|_{H_{n}}=T_{n}$ for all $n\in \mathbb{N}$. By Corollary \ref{corend} we get for $a\in \mathcal{N}_{\psi}$ that
\begin{equation*}
\langle \Lambda_{\psi}(a), \Lambda_{\psi}(E_{n}aE_{m}^{2}) \rangle =\psi(E_{m} a^{*} E_{n} a E_{m}) \to \psi(a^{*}a)=\lVert \Lambda_{\psi}(a)\rVert
\end{equation*}
for $n,m\to \infty$, and hence $\bigcup_{n=1}^{\infty} H_{n}$ is dense in $H_{\psi}$, and the sequence $\{T_{n}\}_{n\in \mathbb{N}}$ defines a bounded operator $T$ on $H_{\psi}$. For $A\in \pi_{\psi}(\mathcal{A}_{n})$ and $h\in H_{n}$ we have that $TAh=T_{n}Ah=ATh$, so we get that $TA=AT$, and hence $T\in \pi_{\psi}(\A)'$. Since $\Lambda_{\psi}(\mathcal{A}_{n})=\pi_{\psi}(\mathcal{A}_{n})\Lambda_{\psi}(E_{n+1})$ then $H_{n}$ is separable, so we can choose a sequence $\{\xi_{n}\}_{n=1}^{\infty}$ with $\{\xi_{n}\}_{n=1}^{N} \subseteq H_{N}$ for all $N\in \mathbb{N}$ such that $\{\xi_{n}\}_{n=1}^{\infty}$ is dense in $H_{\psi}$. For each $N\in \mathbb{N}$ we can use that $T_{N} \in \pi|_{N}(\mathcal{A}_{N})''$ to choose $A_{N} \in \mathcal{A}_{N}$ with $\lVert A_{N}\rVert \leq 2$ and
\begin{equation*}
|\langle (T_{N}-\pi_{\psi}(A_{N})) \xi_{n}  , \xi_{n}  \rangle | \leq \frac{1}{N} \ \quad \text{ for all } n \leq N \ .
\end{equation*}
It follows that $\pi_{\psi}(A_{N})\to T$, and hence $T\in \pi_{\psi}(\A)''$. By assumption this implies that $T=\lambda 1$ with $\lambda  \in \mathbb{C}$, and hence $\phi_{n}=\lambda \psi_{n}$ for all $n$, proving that $\psi$ is extremal.

To prove \eqref{it3} first notice that for $\phi=(\phi_{n})_{n=1}^{\infty}, \psi=(\psi_{n})_{n=1}^{\infty}\in \varprojlim_{n\in \mathbb{N}} \mathcal{W}^{\beta}_{n}$ we have that
\begin{equation} \label{eobsny}
\psi \leq \phi \iff \psi_{n} \leq \phi_{n} \text{ in } \mathcal{W}_{n}^{\beta} \text{ for each } n\in \mathbb{N}.
\end{equation}
Now fix two elements $\phi=(\phi_{n})_{n=1}^{\infty}$ and $ \psi=(\psi_{n})_{n=1}^{\infty}$ in $\varprojlim_{n\in \mathbb{N}} \mathcal{W}^{\beta}_{n}$. Since $\mathcal{W}^{\beta}_{n}$ is a lattice in $\mathcal{A}_{n}^{*}$ there exists a greatest lower bound $\omega_{n}\in \mathcal{W}^{\beta}_{n}$ and a least upper bound $\eta_{n}\in \mathcal{W}^{\beta}_{n}$ of the pair $\phi_{n}$ and $\psi_{n}$ for each $n\in \mathbb{N}$. Since $\omega_{n+1}|_{\mathcal{A}_{n}}$ is a lower bound of $\phi_{n}$ and $\psi_{n}$ we have $\omega_{n+1}|_{\mathcal{A}_{n}} \leq \omega_{n}$, and likewise $\eta_{n+1}|_{\mathcal{A}_{n}} \geq \eta_{n}$. Using that $\eta_{m}|_{\mathcal{A}_{n}}\leq (\psi+\phi)|_{\mathcal{A}_{n}}$ for all $m\geq n$ it follows from Lemma \ref{lem51} that the formulas
\begin{equation*}
\sigma(x):=\lim_{n\to \infty} \eta_{n}(E_{n} x E_{n}) \text{ for } x\in \mathcal{A}_{+}
\end{equation*}
and
\begin{equation*}
\tau(x):=\lim_{n\to \infty} \lim_{m\to \infty}\omega_{m}(E_{n} x E_{n}) \text{ for } x\in \mathcal{A}_{+}
\end{equation*}
define two $\beta$-KMS weights. We claim that $\sigma $ is a least upper bound of $\psi$ and $\phi$ and $\tau $ is a greatest lower bound of $\psi$ and $\phi$.

To prove that $\tau$ is a greatest lower bound notice that $\tau \circ \iota_{n, \infty}=\lim_{m\to \infty} \omega_{m}|_{\mathcal{A}_{n}}$, so since $\mathcal{W}_{n}^{\beta}$ is closed we get that $\tau\circ \iota_{n, \infty}$ is a lower bound of $\psi_{n}$ and $\phi_{n}$ for all $n\in \mathbb{N}$.

If $\tau'$ is another lower bound of $\psi$ and $\phi$, then $\tau'\circ \iota_{m, \infty}\leq \omega_{m}$ for all $m$, so since 
\begin{equation*}
\tau' \circ \iota_{n, \infty} =( \tau' \circ  \iota_{m, \infty})|_{\A_{n}}
 \leq \omega_{m}|_ {\A_{n}}
\end{equation*}
for all $m\geq n$, we see that $\tau'\leq \tau$, proving the $\tau$ is the greatest lower bound. A similar argument proves that $\sigma$ is a least upper bound, proving \eqref{it3}.
\end{proof}

\section{Quasi-invariant measures}\label{section3}
For an \'etale groupoid $\mathcal{G}$, a real number $\beta \in \mathbb{R}$, and a continuous groupoid homomorphism $c: \mathcal{G} \to \mathbb{R}$ the quasi-invariant measures on $\mathcal{G}^{(0)}$ with Radon-Nikodym cocycle $e^{-\beta c}$ play a crucial role in the description of $\beta$-KMS states.  This was first observed by Renault in his pionering thesis, c.f. Proposition II.5.4 in \cite{Re}, and later generalised in Theorem 1.3 in \cite{N}. The aim of this section is therefore to analyse these measures, but since all results are valid for general quasi-invariant measures we will state and prove them in this generality. Most of the results in this section are known to experts when the measure is a probability measure, but since we are dealing with general regular measures, and since there seems to be no reference for most of these facts, we will include proofs. The main result of the section, Theorem \ref{t35}, is new even for probability measures.

\begin{lemma} \label{l31}
When $\mathcal{G}$ is an \'etale groupoid there exists a countable basis for the topology on $\mathcal{G}$ consisting of small bisections, and $r$ and $s$ maps Borel sets to Borel sets.
\end{lemma}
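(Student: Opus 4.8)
The plan is to prove the two assertions separately, building first a convenient countable base and then reducing the Borel statement to it.

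For the base, I would first observe that the bisections form a base for the topology. Since $\G$ is étale, each $g\in\G$ has open neighbourhoods $V_1$ and $V_2$ on which $r$ and $s$ respectively restrict to homeomorphisms onto open sets, and on the open set $V_1\cap V_2$ both $r$ and $s$ are homeomorphisms onto open images, i.e. $V_1\cap V_2$ is a bisection. As any open subset of a bisection is again a bisection, intersecting with an arbitrary open neighbourhood of $g$ shows that the bisections form a base. Next, given $g$ and a bisection $U\ni g$, local compactness together with the Hausdorff property yields an open $W$ with $g\in W$, $\overline{W}$ compact and $\overline{W}\subseteq U$; then $W$ is an open subset of $U$, hence a bisection, and by construction a small bisection. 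Thus the small bisections form a base. Finally I would invoke the standard fact that in a second countable space every base has a countable subfamily that is still a base: fixing a countable base $\{U_n\}$ and selecting, for each pair $(n,m)$ admitting a small bisection $W$ with $U_n\subseteq W\subseteq U_m$, one such $W$, produces the desired countable base of small bisections.

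For the second assertion, I would use that the countable base gives a countable cover $\G=\bigcup_k W_k$ by bisections, on each of which $r|_{W_k}\colon W_k\to r(W_k)$ is a homeomorphism onto an open subset of $\Gu$. The key input is that a homeomorphism maps Borel sets to Borel sets, because its inverse is continuous and hence Borel measurable. Given a Borel set $A\subseteq\G$, I would write $r(A)=\bigcup_k r(A\cap W_k)$; each $A\cap W_k$ is Borel in the subspace $W_k$, so $r(A\cap W_k)$ is Borel in $r(W_k)$, and since $r(W_k)$ is open in $\Gu$ and $\Gu$ is (closed, hence) Borel in $\G$, each $r(A\cap W_k)$ is Borel in $\G$. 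A countable union of Borel sets is Borel, so $r(A)$ is Borel; the argument for $s$ is identical.

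The routine parts here are the neighbourhood bookkeeping and the subspace-versus-ambient Borel comparisons. The only genuinely delicate point is the Borel-image claim, since images of Borel sets under continuous maps are in general merely analytic and need not be Borel. What rescues the statement is the local injectivity built into étaleness: restricting $r$ to a bisection turns it into a homeomorphism, and the global map decomposes into countably many such restrictions along the cover. I therefore expect the crux of the write-up to be making this reduction to countably many homeomorphisms precise, rather than any single hard estimate.
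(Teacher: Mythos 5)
Your proposal is correct and follows essentially the same route as the paper: a countable base of small bisections obtained from second countability (the paper states this in one line, you fill in the local-compactness and base-refinement details), followed by decomposing $r(A)=\bigcup_k r(A\cap W_k)$ and using that $r$ restricted to each bisection is a homeomorphism, hence maps Borel sets to Borel sets. Your closing remark correctly identifies the one genuinely non-routine point — that continuous images of Borel sets are in general only analytic, and it is the reduction to countably many homeomorphisms that saves the statement.
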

\begin{proof}
The first fact follows since $\mathcal{G}$ is second countable and since an open subset of a bisection is a bisection. If $\{W_{i}\}_{i\in \mathbb{N}}$ is a countable basis of small bisections and $B$ is Borel, we have
\begin{equation*}
r(B)=\bigcup_{i=1}^{\infty} r(B\cap W_{i}) .
\end{equation*}
Now $r(B\cap W_{i})$ is Borel since $r|_{W_{i}}:W_{i}\to r(W_{i})$ is a homeomorphism, so $r(B)$ is Borel. The statement concerning $s$ follows similarly.
\end{proof}

Let $\mathcal{G}$ be an \'etale groupoid, and let $\mu$ be a regular Borel measure on $\mathcal{G}^{(0)}$. Using Riesz representation theorem we obtain two unique regular Borel measures $\mu_{r}$ and $\mu_{s}$ on $\mathcal{G}$ such that
\begin{equation} \label{eny1}
\int_{\mathcal{G}} f\ d\mu_{r} = \int_{\mathcal{G}^{(0)}} \sum_{g\in \mathcal{G}^{x}} f(g) \ d\mu(x) \ , \ \int_{\mathcal{G}} f\ d\mu_{s} = \int_{\mathcal{G}^{(0)}} \sum_{g\in \mathcal{G}_{x}} f(g) \ d\mu(x) \
\end{equation}
for all $f\in C_{c}(\mathcal{G})$.

\begin{defn}
Let $\mathcal{G}$ be an \'etale groupoid and let $\mu$ be a regular Borel measure on $\mathcal{G}^{(0)}$. We call $\mu$ quasi-invariant with Radon-Nikodym cocycle $\kappa$ if $\mu_{r}$ and $\mu_{s}$ are equivalent and $d \mu_{r} / d\mu_{s}=\kappa$ for a Borel function $\kappa:\mathcal{G} \to \mathbb{R}$.
\end{defn}

If $\mu_{r}$ and $\mu_{s}$ are equivalent and $d \mu_{r} / d\mu_{s}=\kappa$ we can and will assume that $\kappa$ is positive everywhere, and it then follows that $d \mu_{s} / d\mu_{r}=\kappa^{-1}$.

\begin{prop} \label{p33}
Let $\mathcal{G}$ be an \'etale groupoid and let $\mu$ be a regular Borel measure on $\mathcal{G}^{(0)}$. The following are equivalent:
\begin{enumerate}
\item \label{1quasi}$\mu$ is quasi-invariant with Radon-Nikodym cocycle $\kappa$.
\item \label{2quasi}For all small bisections $W\subseteq \mathcal{G}$ we have
\begin{equation}\label{e3p33}
\mu(s(W))=\int_{r(W)}\kappa(r_{W}^{-1}(x))^{-1} \ d\mu (x)
\end{equation}
where $r_{W}^{-1}$ is the inverse of $r_{W}:W \to r(W)$.
\item \label{2quasia} Equation \eqref{e3p33} is true for all bisections $W\subseteq \G$. 
\item \label{3quasi} Whenever $B\subseteq W$ is Borel for some small bisection $W$
\begin{equation*}
\mu(s(B))=\int_{r(B)} \kappa(r_{W}^{-1}(x))^{-1} \ d\mu (x) .
\end{equation*}
\end{enumerate}
\end{prop}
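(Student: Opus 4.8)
The plan is to prove the four conditions equivalent by establishing the cycle \eqref{1quasi}$\Rightarrow$\eqref{2quasia}$\Rightarrow$\eqref{2quasi}$\Rightarrow$\eqref{3quasi}$\Rightarrow$\eqref{1quasi}. The computational backbone is the observation that the defining identities \eqref{eny1}, which a priori hold only for $f\in C_{c}(\mathcal{G})$, extend by monotone convergence to all non-negative Borel functions on $\mathcal{G}$; here one invokes Lemma \ref{l31} to see that $x\mapsto \sum_{g\in\mathcal{G}^{x}}f(g)$ is Borel. Applying this to $f=\mathbf{1}_{B}$ and $f=\mathbf{1}_{B}\kappa^{-1}$ for a Borel set $B$ contained in a bisection $W$ yields the basic formulas $\mu_{r}(B)=\mu(r(B))$, $\mu_{s}(B)=\mu(s(B))$ and $\int_{B}\kappa^{-1}\,d\mu_{r}=\int_{r(B)}\kappa(r_{W}^{-1}(x))^{-1}\,d\mu(x)$, since the sums collapse to a single term when $W$ is a bisection.

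For \eqref{1quasi}$\Rightarrow$\eqref{2quasia}, quasi-invariance gives $d\mu_{s}/d\mu_{r}=\kappa^{-1}$, so for any bisection $W$ I compute $\mu(s(W))=\mu_{s}(W)=\int_{W}\kappa^{-1}\,d\mu_{r}=\int_{r(W)}\kappa(r_{W}^{-1}(x))^{-1}\,d\mu(x)$ directly from the formulas above, which is \eqref{e3p33}; this holds as an identity in $[0,\infty]$ even when $\mu(s(W))=\infty$. The implication \eqref{2quasia}$\Rightarrow$\eqref{2quasi} is immediate, since every small bisection is a bisection.

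For \eqref{2quasi}$\Rightarrow$\eqref{3quasi} I fix a small bisection $W$ and introduce the homeomorphism $\tau_{W}:=s|_{W}\circ r_{W}^{-1}\colon r(W)\to s(W)$, so that $s(B)=\tau_{W}(r(B))$ for Borel $B\subseteq W$. I then compare the two Borel measures on $r(W)$ given by $A\mapsto \mu(\tau_{W}(A))$ and $A\mapsto \int_{A}\kappa(r_{W}^{-1}(x))^{-1}\,d\mu(x)$. Every open $V\subseteq W$ is again a small bisection, so \eqref{e3p33} applied to $V$ shows these two measures agree on all open subsets of $r(W)$; since both assign the finite value $\mu(s(W))$ to $r(W)$ — finiteness coming from regularity of $\mu$ together with compactness of $s(\overline{W})$ — uniqueness of finite Borel measures that agree on the $\pi$-system of open sets forces them to coincide on every Borel set, which is \eqref{3quasi}.

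Finally, for \eqref{3quasi}$\Rightarrow$\eqref{1quasi} I use Lemma \ref{l31} to fix a countable basis $\{W_{i}\}$ of small bisections and decompose an arbitrary Borel set $A\subseteq\mathcal{G}$ as a disjoint union $A=\bigsqcup_{i}A_{i}$ with each $A_{i}\subseteq W_{i}$ Borel; by countable additivity it suffices to verify $\mu_{r}(B)=\int_{B}\kappa\,d\mu_{s}$ for Borel $B$ inside a single small bisection $W$. Reading \eqref{3quasi} as the statement that $\mu|_{s(W)}$ is the pushforward under $\tau_{W}$ of the measure $\kappa(r_{W}^{-1}(\cdot))^{-1}\,d\mu|_{r(W)}$, the change-of-variables formula turns $\int_{B}\kappa\,d\mu_{s}=\int_{s(B)}\kappa(s_{W}^{-1}(y))\,d\mu(y)$ into $\int_{r(B)}1\,d\mu=\mu(r(B))=\mu_{r}(B)$, using the identity $s_{W}^{-1}(\tau_{W}(x))=r_{W}^{-1}(x)$. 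Since $\kappa>0$, the resulting equality $\mu_{r}=\kappa\,\mu_{s}$ gives $\mu_{r}\sim\mu_{s}$ with $d\mu_{r}/d\mu_{s}=\kappa$. I expect the main obstacle to be the measure-theoretic uniqueness step in \eqref{2quasi}$\Rightarrow$\eqref{3quasi}: one must confirm that the two transported set functions really are finite regular Borel measures before upgrading agreement on open sub-bisections to agreement on all Borel subsets.
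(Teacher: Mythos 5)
Your proposal is correct and follows essentially the same route as the paper: the same reduction of \eqref{eny1} to the identities $\mu_{r}(B)=\mu(r(B))$, $\mu_{s}(B)=\mu(s(B))$ on bisections, the same finite-measure-agreement argument for \eqref{2quasi}$\Rightarrow$\eqref{3quasi} (you use the $\pi$-$\lambda$ theorem where the paper invokes regularity, which is only a cosmetic difference), and the same countable decomposition into small bisections for \eqref{3quasi}$\Rightarrow$\eqref{1quasi}. The only reorganisation is that you prove \eqref{1quasi}$\Rightarrow$\eqref{2quasia} directly rather than passing through \eqref{2quasi} and the increasing-union argument, which changes nothing of substance.
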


\begin{proof}
Let $W$ be a small bisection and let $\mu$ be a regular Borel measure on $\mathcal{G}^{(0)}$. For $h\in C_{c}(W)$ then \eqref{eny1} implies that
\begin{equation*}
\int_{\mathcal{G}} h\ d\mu_{r} = \int_{r(W)} h(r_{W}^{-1}(x)) \ d\mu(x) \ \text{ and } \ \int_{\mathcal{G}} h\ d\mu_{s} = \int_{s(W)}h(s_{W}^{-1}(x)) \ d\mu(x) .
\end{equation*}
This guarantees that
\begin{equation}\label{eny3}
\mu_{s}(B)=\mu(s(B)) \text{ and } \mu_{r}(B)=\mu(r(B)) \text{ for all Borel } B\subseteq W \ .
\end{equation}

To see that (\ref{1quasi}) implies (\ref{2quasi}) assume that $\mu$ is quasi-invariant with Radon-Nikodym cocycle $\kappa$ and let $W\subseteq \mathcal{G}$ be a small bisection. Since $d\mu_{s} / d\mu_{r} = \kappa^{-1}$ we get
\begin{equation*}
\mu(s(W))=\mu_{s}(W)=\int_{W} \kappa^{-1} d\mu_{r}= \int_{r(W)} \kappa(r_{W}^{-1}(x))^{-1} \ d\mu (x)
\end{equation*}
by using \eqref{eny3}. The equivalence of (\ref{2quasi}) and (\ref{2quasia}) follows by writing any bisection $W$ as the union of an increasing sequence of small bisections. For (\ref{2quasi}) implies (\ref{3quasi}) let $W$ be a small bisection and let $\mathcal{B}(W)$ denote the Borel subsets of $W$, and fix $B'\in  \mathcal{B}(W)$. The two finite Borel measures on $W$ given by
\begin{equation*}
\mathcal{B}(W) \ni B \to \mu(s(B)) \quad , \quad \mathcal{B}(W) \ni B \to \int_{r(B)} \kappa(r_{W}^{-1}(x))^{-1} \ d\mu (x)
\end{equation*}
agree on open sets by assumption, so by regularity they agree on $B'$, which proves (\ref{2quasi}) implies (\ref{3quasi}). To see that (\ref{3quasi}) implies (\ref{1quasi}), notice that we can write any Borel $B\subseteq \mathcal{G}$ as a countable disjoint union
$\bigsqcup_{i} B_{i}$ with each $B_{i}$ Borel and contained in a small
bisection. Fix $i$ and suppose $B_{i} \subseteq W$ for a small
bisection $W$, it follows from \eqref{eny3} that
\begin{equation*}
\mu_{s}(B_{i})=\mu(s(B_{i}))= \int_{r(B_{i})} \kappa(r_{W}^{-1}(x))^{-1} \ d\mu (x) = \int_{B_{i}} \kappa(g)^{-1} d\mu_{r}(g)  .
\end{equation*}
This proves that (\ref{3quasi}) implies (\ref{1quasi}).
\end{proof}

\begin{lemma} \label{l34}
Let $\mathcal{G}$ be an \'etale groupoid. For $N\subseteq \Gu$ then
\begin{equation*}
s(r^{-1}(N))=r(s^{-1}(N))
\end{equation*}
and this set is Borel if $N\subseteq \Gu$ is Borel. If $\mu$ is quasi-invariant with Radon-Nikodym cocycle $\kappa$ and $\mu(N)=0$ for a Borel set $N$ then $\mu(s(r^{-1}(N)))=0$.
\end{lemma}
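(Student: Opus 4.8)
The plan is to dispatch the three assertions in turn: the set equality comes from the inversion map, measurability from Lemma~\ref{l31}, and the null-set statement from decomposing into small bisections and invoking Proposition~\ref{p33}. To prove $s(r^{-1}(N)) = r(s^{-1}(N))$, I would exploit that the inversion map $g \mapsto g^{-1}$ interchanges $r$ and $s$. Writing $A^{-1} = \{g^{-1} : g \in A\}$ for $A \subseteq \mathcal{G}$, one has $s(A) = r(A^{-1})$, and a direct computation shows $(r^{-1}(N))^{-1} = s^{-1}(N)$; combining these yields $s(r^{-1}(N)) = r\bigl((r^{-1}(N))^{-1}\bigr) = r(s^{-1}(N))$. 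Equivalently, this follows from a short element chase: if $y = s(g)$ with $r(g) \in N$, then $g^{-1} \in s^{-1}(N)$ and $r(g^{-1}) = y$, and symmetrically for the reverse inclusion.

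For measurability, since $r$ is continuous the set $r^{-1}(N)$ is Borel whenever $N$ is Borel, and then the second part of Lemma~\ref{l31}, that $s$ maps Borel sets to Borel sets, shows that $s(r^{-1}(N))$ is Borel.

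The heart of the lemma is the null-set assertion. Using the countable basis of small bisections from Lemma~\ref{l31}, I would decompose the Borel set $r^{-1}(N)$ as a countable disjoint union $\bigsqcup_{i} B_{i}$ with each $B_{i}$ Borel and contained in a small bisection $W_{i}$; this is precisely the decomposition already used in the proof of Proposition~\ref{p33}. Then $s(r^{-1}(N)) = \bigcup_{i} s(B_{i})$, so by countable subadditivity it suffices to show $\mu(s(B_{i})) = 0$ for every $i$. Proposition~\ref{p33}(\ref{3quasi}) gives
\begin{equation*}
\mu(s(B_{i})) = \int_{r(B_{i})} \kappa(r_{W_{i}}^{-1}(x))^{-1} \ d\mu(x).
\end{equation*}
Since $B_{i} \subseteq r^{-1}(N)$ we have $r(B_{i}) \subseteq N$, and $r(B_{i})$ is Borel because $r|_{W_{i}}$ is a homeomorphism; hence $\mu(r(B_{i})) \leq \mu(N) = 0$, so the integral over the $\mu$-null set $r(B_{i})$ vanishes. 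Summing over $i$ yields $\mu(s(r^{-1}(N))) = 0$.

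The main point requiring care is purely bookkeeping: ensuring each piece $B_{i}$ is genuinely Borel and contained in a small bisection so that Proposition~\ref{p33}(\ref{3quasi}) applies, and confirming that integrating the possibly unbounded Radon--Nikodym factor $\kappa^{-1}$ over the $\mu$-null set $r(B_{i})$ contributes nothing. Everything else reduces to the symmetry furnished by inversion and to monotonicity of $\mu$.
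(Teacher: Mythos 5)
Your proposal is correct and follows essentially the same route as the paper: the set equality via the inversion map, measurability via Lemma~\ref{l31}, and the null-set claim by covering $r^{-1}(N)$ with countably many Borel pieces inside small bisections and applying the change-of-variables formula of Proposition~\ref{p33} to each piece, where the integral over the $\mu$-null set $r(B_{i})\subseteq N$ vanishes. The only cosmetic difference is that you disjointify the cover while the paper simply writes $s(r^{-1}(N))=\bigcup_{i} s(r_{W_{i}}^{-1}(N\cap r(W_{i})))$ over the basis of small bisections; both reduce to the same computation.
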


\begin{proof}
The first statement follows by using that $r(g)=s(g^{-1})$ for $g\in \G$, and the second follows since $s$ is continuous and $r$ maps Borel sets to Borel sets by Lemma \ref{l31}. If $W$ is a small bisection and $\mu(N)=0$ then Proposition~\ref{p33} implies
\begin{equation*}
\mu(s(r_{W}^{-1}(N \cap r(W))))= \int_{N \cap r(W)} \kappa(r_{W}^{-1}(x))^{-1} \ d\mu(x) =0.
\end{equation*}
Taking a basis $\{W_{i}\}_{i=1}^{\infty}$ of small
  bisections then
\begin{equation*}
s(r^{-1}(N))=\bigcup_{i=1}^{\infty} s(r_{W_{i}}^{-1}(N \cap r(W_{i})))
\end{equation*}
which proves $\mu(s(r^{-1}(N)))=0$.
\end{proof}

The main result in this section, which will be a key tool in several arguments in this paper, is Theorem \ref{t35} below. In spirit it is closely related to the classical observation in ergodic theory, that the extremal invariant probability measures for a homeomorphism on a compact metric space are exactly the ergodic invariant probability measures.

For an \'etale groupoid $\G$ and a positive Borel map $\kappa: \G\to \mathbb{R}$ we let $\Delta(\kappa)$ denote the set of regular Borel measures on $\Gu$ that are quasi-invariant with Radon-Nikodym cocycle $\kappa$. We say $\mu \in \Delta(\kappa)$ is \emph{extremal} when any $\mu_{1}, \mu_{2} \in\Delta(\kappa)$ with $\mu=\mu_{1}+\mu_{2}$ satisfies $\mu_{1}, \mu_{2} \in \{ \lambda \mu  :  \lambda \geq 0 \}$. Following \cite{Re} we call a set $B \subseteq \mathcal{G}^{(0)}$ invariant if $
B=r(s^{-1}(B))$. A Borel measure $\mu$ on $\mathcal{G}^{(0)}$ is then called \emph{ergodic} if for all invariant Borel sets $B$ we either have $\mu(B) =0$ or $\mu(B^{C})=0$. 

\begin{thm} \label{t35}
Let $\G$ be an \'etale groupoid and $\kappa:\G\to \mathbb{R}$ a positive Borel map. A measure in $\Delta(\kappa)$ is extremal if and only if it is ergodic.
\end{thm}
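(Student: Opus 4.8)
The plan is to prove both implications of the equivalence between extremality and ergodicity in $\Delta(\kappa)$, working by contraposition in each direction. The natural strategy is to translate decompositions of a quasi-invariant measure $\mu$ into decompositions of the underlying space along invariant Borel sets, and conversely. Throughout I would lean on Proposition~\ref{p33}, which characterises membership in $\Delta(\kappa)$ purely in terms of the behaviour of the measure on small bisections via equation~\eqref{e3p33}, together with Lemma~\ref{l34}, which guarantees that the saturation $s(r^{-1}(N))=r(s^{-1}(N))$ of a null set is null, so that invariant sets interact well with $\mu$.

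For the direction ``ergodic implies extremal'', I would argue the contrapositive: suppose $\mu$ is not extremal, so $\mu=\mu_{1}+\mu_{2}$ with neither $\mu_{i}$ a nonnegative scalar multiple of $\mu$. Since $\mu_{1}\leq \mu$, the measure $\mu_{1}$ is absolutely continuous with respect to $\mu$, so by Radon--Nikodym there is a Borel function $h=d\mu_{1}/d\mu$ with $0\le h\le 1$. The key step is to show that $h$ must be (almost everywhere) an invariant function, i.e.\ constant along the orbit equivalence relation. This is where the cocycle identity enters: applying the defining equation~\eqref{e3p33} for both $\mu$ and $\mu_{1}$ on small bisections $W$, and using that both share the \emph{same} Radon--Nikodym cocycle $\kappa$, one deduces that $h(s_{W}^{-1}(x))=h(r_{W}^{-1}(x))$ for $\mu$-almost every relevant $x$; ranging over a countable basis of small bisections from Lemma~\ref{l31} then forces $h$ to be invariant off a null set. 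An invariant Borel function that is not $\mu$-a.e.\ constant yields, via a level set $B=\{h\le t\}$ for a suitable threshold $t$, an invariant Borel set with $\mu(B)>0$ and $\mu(B^{C})>0$, contradicting ergodicity. That $h$ is non-constant follows precisely because $\mu_{1}$ is not a scalar multiple of $\mu$.

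For the converse, ``extremal implies ergodic'', I would again argue the contrapositive. Suppose $\mu$ is not ergodic, so there is an invariant Borel set $B$ with $\mu(B)>0$ and $\mu(B^{C})>0$. The natural candidates are the restrictions $\mu_{1}:=\mu|_{B}$ and $\mu_{2}:=\mu|_{B^{C}}$, which clearly sum to $\mu$ and are each nonzero. The crux is to verify that $\mu_{1}$ and $\mu_{2}$ again lie in $\Delta(\kappa)$, i.e.\ that restricting to an invariant set preserves quasi-invariance with the \emph{same} cocycle $\kappa$. Here invariance of $B$ is essential: because $B=r(s^{-1}(B))$, a small bisection $W$ relates points of $B$ only to points of $B$ under $r_{W}$ and $s_{W}$ (and similarly for $B^{C}$), so the integral identity~\eqref{e3p33} for $\mu$ splits cleanly into the corresponding identities for $\mu_{1}$ and $\mu_{2}$. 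Checking this carefully via Proposition~\ref{p33}, using Lemma~\ref{l34} to handle the measurability and null-set bookkeeping of $s(r^{-1}(\cdot))$, shows $\mu_{1},\mu_{2}\in\Delta(\kappa)$. Since $B$ is neither null nor conull, neither restriction is a scalar multiple of $\mu$, so $\mu$ is not extremal.

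I expect the main obstacle to be the first direction, specifically proving that the Radon--Nikodym derivative $h=d\mu_{1}/d\mu$ is invariant. This requires manipulating the absolute-continuity relation through the bisection formula~\eqref{e3p33} and handling a countable family of null sets (one per basis bisection) simultaneously, so that invariance holds off a single $\mu$-null invariant set; the role of Lemma~\ref{l34} in ensuring that saturations of null sets stay null is what makes this bookkeeping work. The converse is more routine once one sees that invariance of $B$ is exactly the hypothesis needed to make~\eqref{e3p33} restrict, but care is still needed to confirm the restricted measures remain regular and retain the identical cocycle $\kappa$ rather than a rescaled one.
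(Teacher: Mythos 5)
Your proposal is correct, and one half of it coincides with the paper while the other half takes a genuinely different route. The direction ``extremal $\Rightarrow$ ergodic'' is exactly the paper's argument: for an invariant Borel set $B$ the identity $s(W)\cap B=s\bigl(r_{W}^{-1}(r(W)\cap B)\bigr)$ makes \eqref{e3p33} restrict, so $\mu|_{B},\mu|_{B^{C}}\in\Delta(\kappa)$ and a non-trivial invariant set destroys extremality. For ``ergodic $\Rightarrow$ extremal'' you and the paper diverge. You prove that the Radon--Nikodym derivative $h=d\mu_{1}/d\mu$ is invariant $\mu$-a.e.: comparing $\mu_{1}(s(B))=\int_{s(B)}h\,d\mu$ with $\int_{r(B)}\kappa(r_{W}^{-1}(x))^{-1}h(x)\,d\mu(x)$ over Borel $B\subseteq W$, and using the change of variables coming from quasi-invariance of $\mu$ itself, forces $h(s(g))=h(r(g))$ off a null set for each basis bisection; Lemma~\ref{l31} and Lemma~\ref{l34} then let you assemble a single invariant null set off which $h$ is constant on orbits, and a level set of a non-constant invariant $h$ contradicts ergodicity. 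The paper never establishes full invariance of $f_{1}=d\mu_{1}/d\mu$; instead it saturates the sub-level set $f_{1}^{-1}([0,t[)$, uses ergodicity to conclude the saturation is conull, and then proves only the one-sided ``Claim'' that the bound $f_{1}\leq t$ propagates along orbits, by exhibiting a set $H$ inside a single small bisection on which the two ways of computing $\mu_{1}(H)$ (directly via $f_{1}>t$, and via quasi-invariance of $\mu_{1}$ pulled back to where $f_{1}<t$) are incompatible. Your route is the more classical ergodic-theoretic one and yields the stronger intermediate statement that absolutely continuous measures in $\Delta(\kappa)$ have invariant densities, at the cost of the change-of-variables extension of \eqref{e3p33} from indicators to Borel functions and the null-set bookkeeping you flag; the paper's route needs only the identity on Borel subsets of bisections and gets away with a weaker, one-sided transport estimate. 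Both are complete proofs once the details you indicate are written out.
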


\begin{proof}
If $B$ is invariant then $B^{C}$ is invariant, so to prove that extremal measures are ergodic it suffices to prove that if $B\subseteq \Gu$ is Borel and invariant and $\mu \in \Delta(\kappa)$ then $\mu_{B}(\cdot )=\mu(B\cap \cdot\ ) \in \Delta(\kappa)$. Let $W$ be a small bisection in $\mathcal{G}$, then
\begin{equation*}
s(W) \cap B = s(r_{W}^{-1}(r(W) \cap B)) .
\end{equation*}
Using Proposition~\ref{p33} we get that
\begin{equation*}
\mu_{B}(s(W)) = \int_{r(W) \cap B} \kappa(r_{W}^{-1}(x))^{-1} \ d\mu (x) = \int_{r(W) } \kappa(r_{W}^{-1}(x))^{-1} \ d\mu_{B}(x)
\end{equation*}
and hence $\mu_{B}\in \Delta(\kappa)$.

Assume now that $\mu$ is ergodic, and that $\mu_{1}, \mu_{2} \in \Delta(\kappa)\setminus \{0\}$ satisfy that $\mu=\mu_{1}+\mu_{2}$. Since $\mathcal{G}^{(0)}$ is $\sigma$-compact the Radon-Nikodym theorem implies that there exists non-negative Borel functions $f_{i}:\mathcal{G}^{(0)} \to [0, \infty[$ for $i=1,2$ such that
\begin{equation*}
\mu_{i}(B)=\int_{B} f_{i}(x) \ d \mu(x)
\end{equation*} 
for all Borel sets $B \subseteq \mathcal{G}^{(0)}$. Since $\mu=\mu_{1}+\mu_{2}$ we have that $f_{1}(x)+f_{2}(x)=1$ for $\mu$-a.e. $x$. If $f_{1}$ is constant $\mu$-almost everywhere it follows that $\mu_{1}$ and $\mu_{2}$ are scalings of $\mu$, completing the proof, so assume for a contradiction this is not the case. It follows that we can find a $t\in ]0,1[$ such that $\mu(f_{1}^{-1}([0,t[))>0$ and $\mu(f_{1}^{-1}(]t,1]))>0$.

Lemma~\ref{l34} implies that
$B:=s(r^{-1}(f_{1}^{-1}([0,t[)))$ is Borel and invariant, and since $\mu(B)\geq \mu(f_{1}^{-1}([0,t[))>0$ we must have that $\mu(B^{C})=0$ by ergodicity of $\mu$. Since $\mu(f_{1}^{-1}(]t,1]))>0$ and
$f_{1}(x)+f_{2}(x)=1$ for $\mu$-a.e. $x$ we have that
\begin{equation*}
\mu(f_{2}^{-1}([0,1-t[))>0,
\end{equation*}
so setting $C:=s(r^{-1}(f_{2}^{-1}([0,1-t[)))$ we likewise get $\mu(C^{C})=0$.

\bigskip

\emph{Claim:} $\mu(\{x \in B \ |\ f_{1}(x)>t \})=0$ and $\mu(\{x \in C \ |\ f_{2}(x)>1-t \})=0$.

\bigskip

If this claim is true we reach the contradiction as follows: By construction $\mu(B^{C} \cup C^{C})=0$, and hence for $\mu$-a.e. $x $ we have $f_{1}(x)\leq t$ and $f_{2}(x) \leq 1-t$ by the claim. Since $f_{1}(x)+f_{2}(x)=1$ for $\mu$-a.e. $x$ this implies that $f_{1}(x)=t$ for $\mu$-a.e. $x$, contradicting that $f_{1}$ was not constant. In conclusion $\mu_{1}, \mu_{2} \in \mathbb{R}_{+} \mu$, proving that $\mu$ is extremal.

To prove the claim, assume for a contradiction that
\begin{equation*}
\mu(\{x \in B \ |\ f_{1}(x)>t \})>0.
\end{equation*} 
Let $\{W_{j}\}_{j=1}^{\infty}$ be a countable basis of small bisections, then
\begin{equation*}
B=s(r^{-1}(f_{1}^{-1}([0,t[))) = \bigcup_{j=1}^{\infty} s\bigg(r_{W_{j}}^{-1}\Big(f_{1}^{-1}([0,t[) \cap r(W_{j})\Big)\bigg).
\end{equation*} 
Since the $W_{j}$'s are small bisections, there exists a $j$ such that $0<\mu(H)<\infty$ where
\begin{equation*}
H:=\left\{x \in s\big (r_{W_{j}}^{-1}\left[f_{1}^{-1}([0,t[) \cap r(W_{j})\right] \big) \ : \  f_{1}(x) >t\right\}.
\end{equation*}
By definition of $f_{1}$ we get that
\begin{equation*} 
\mu_{1}(H)=\int_{H} f_{1}(x) \ d \mu(x) >t \mu(H)
\end{equation*}
but on the other hand, since $r(s_{W_{j}}^{-1}(H))\subseteq f_{1}^{-1}([0,t[)$ we get
\begin{align*}
\mu_{1}(H) &= \mu_{1}\left(s\big(s_{W_{j}}^{-1}(H)\big)\right)
=\int_{r(s_{W_{j}}^{-1}(H))} \kappa(r_{W_{j}}^{-1}(x))^{-1} \ d \mu_{1}(x) \\
&\leq t \int_{r(s_{W_{j}}^{-1}(H))} \kappa(r_{W_{j}}^{-1}(x))^{-1} \ d \mu(x)
=t \mu\left(s\big(s_{W_{j}}^{-1}(H)\big)\right)=t \mu(H)
\end{align*}
a contradiction. It follows that $\mu(\{x \in B \ |\ f_{1}(x)>t \})=0$. We leave it to the reader to check that the proof that $\mu(\{x \in C \ |\ f_{2}(x)>1-t \})=0$ follows in exactly the same way. In conclusion the claim is true, which proves that ergodic measures are extremal.
\end{proof}

\begin{remark}\label{lergokonstant}
One of the key ideas in this paper is to use ideas from ergodic theory to analyse KMS weights, which is possible because of Theorem \ref{t35}. In particular we will use that when $\mu\in \Delta(\kappa)$ is ergodic and $X$ is a second countable metric space, then any Borel function $f:\Gu \to X$ that is constant on the sets $s(r^{-1}(\{x\}))$ for $\mu$-a.e. $x\in \Gu$ is constant $\mu$-almost everywhere.
\end{remark}

\section{Neshveyev's Theorem}\label{section4}
In this section we will generalise Neshveyev's Theorem \cite{N} from KMS states to KMS weights, and we will use it to answer some open questions in the literature. The first step in this analysis is to prove that any KMS weight $\psi$ for a diagonal action $\alpha^{c}$ is finite on $C_{c}(\mathcal{G})$, which follows from Proposition \ref{prop31}. 

\begin{prop} \label{p41}
Let $\mathcal{G}$ be an \'etale groupoid, let
  $c : \mathcal{G} \to \mathbb{R}$ be a continuous groupoid
  homomorphism and let $\beta \in \mathbb{R}$. If $\psi$ is a $\beta$-KMS
  weight on $C^{*}(\mathcal{G})$ for $\alpha^{c}$ then $\CcG \subseteq \mathcal{M}_{\psi}$. 
\end{prop}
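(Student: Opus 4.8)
The plan is to deduce this from Corollary~\ref{cor32}, for which I need to verify its hypothesis $F(C^{*}(\mathcal{G})) \subseteq D(\alpha^{c}_{-i\beta/2})$ in the present setting. The key point is that the diagonal action $\alpha^{c}$ acts very simply on $C_{c}(\mathcal{G})$: from \eqref{e1par} we have $\alpha^{c}_{t}(f)(g) = e^{itc(g)}f(g)$, and since $c$ is a continuous real-valued function, for a fixed $f \in C_{c}(\mathcal{G})$ the assignment $z \mapsto f_{z}$, where $f_{z}(g) := e^{izc(g)}f(g)$, should define an entire $C^{*}(\mathcal{G})$-valued function agreeing with $t \mapsto \alpha^{c}_{t}(f)$ on $\mathbb{R}$. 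Thus every $f \in C_{c}(\mathcal{G})$ is analytic for $\alpha^{c}$, so in particular $C_{c}(\mathcal{G}) \subseteq D(\alpha^{c}_{-i\beta/2})$.

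First I would establish this analyticity rigorously. Because $c$ need not be bounded on all of $\mathcal{G}$ but $f$ has compact support $K$, the factor $e^{izc(g)}$ is bounded on $K$ by $e^{|\operatorname{Im}(z)| \cdot \sup_{g \in K}|c(g)|}$, so $f_{z} \in C_{c}(\mathcal{G})$ with support in $K$ and the norm $\lVert f_{z}\rVert$ is locally bounded in $z$. To see that $z \mapsto f_{z}$ is genuinely analytic as a map into $C^{*}(\mathcal{G})$, I would note that $\frac{f_{z+h}-f_{z}}{h} \to g \mapsto ic(g)e^{izc(g)}f(g)$ uniformly on $K$ as $h \to 0$, and that uniform convergence on the fixed compact set $K$ of functions supported in a bisection-covered region controls the full norm via $\lVert \cdot \rVert \leq C\lVert \cdot \rVert_{\infty}$-type estimates over finitely many bisections covering $K$; this gives norm-differentiability and hence analyticity. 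The main obstacle is precisely this last point: one must control the $C^{*}(\mathcal{G})$-norm (not merely $\lVert\cdot\rVert_{\infty}$) of the difference quotients, and I would handle it by covering $K$ by finitely many small bisections, writing $f$ as a finite sum of functions each supported in a single bisection (on which $\lVert\cdot\rVert = \lVert\cdot\rVert_{\infty}$), and applying the uniform estimate term by term.

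With $C_{c}(\mathcal{G}) \subseteq D(\alpha^{c}_{-i\beta/2})$ in hand, I next need $F(C^{*}(\mathcal{G})) \subseteq D(\alpha^{c}_{-i\beta/2})$ to invoke the corollary; however, $F(C^{*}(\mathcal{G}))$ contains positive elements that are not in $C_{c}(\mathcal{G})$, so this inclusion may fail and I would instead apply Proposition~\ref{prop31} directly. Given $f \in C_{c}(\mathcal{G})$ positive, I want to show $\psi(f) < \infty$. I would approximate: choose $h \in C_{c}(\mathcal{G}^{(0)})_{+}$ with $h \equiv 1$ on the compact set $s(\operatorname{supp} f) \cup r(\operatorname{supp} f)$, so that $hf = f = fh$ and hence $f \in F(C^{*}(\mathcal{G}))$ via a suitable positive $b$. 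Since $f$ is also in $D(\alpha^{c}_{-i\beta/2})$ by the analyticity above, Proposition~\ref{prop31} gives $f \in \mathcal{N}_{\psi}$, i.e.\ $\psi(f^{*}f) < \infty$; more directly, for $f \in F(\mathcal{A}) \cap D(\alpha^{c}_{-i\beta/2})$ positive we get $f^{1/2} \in F(\mathcal{A})$ (as in Corollary~\ref{cor32}) and $f^{1/2} \in D(\alpha^{c}_{-i\beta/2})$, whence $\psi(f) = \psi((f^{1/2})^{*}f^{1/2}) < \infty$.

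Finally I would assemble the conclusion $C_{c}(\mathcal{G}) \subseteq \mathcal{M}_{\psi}$. Every $f \in C_{c}(\mathcal{G})$ decomposes as a complex-linear combination of four positive elements of $C_{c}(\mathcal{G})$ (real/imaginary parts, then positive/negative parts), each of which lies in $\mathcal{M}_{\psi}^{+}$ by the previous paragraph; since $\mathcal{M}_{\psi} = \operatorname{span}\mathcal{M}_{\psi}^{+}$, the claim follows. I expect the analyticity verification to be the genuine content of the argument, with the passage from analyticity plus the $F(\mathcal{A})$-membership to finiteness being a routine application of the already-established Proposition~\ref{prop31} and Corollary~\ref{cor32}.
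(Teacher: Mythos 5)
Your overall strategy (reduce to Proposition \ref{prop31} via a compactly supported unit-space function acting as a local unit) is the right one, and your analyticity argument for $C_{c}(\mathcal{G})$ is correct, but two steps as written do not go through. First, to get $\psi(f)<\infty$ for a positive $f\in C_{c}(\mathcal{G})$ you invoke "$f^{1/2}\in D(\alpha^{c}_{-i\beta/2})$". Here $f^{1/2}$ is the $C^{*}$-algebraic square root of a convolution-positive element; it does not lie in $C_{c}(\mathcal{G})$ in general, and the domain of the analytic generator is not preserved by continuous functional calculus (note $f_{z}$ is not positive for non-real $z$, so there is no candidate analytic continuation $(f_{z})^{1/2}$). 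Without this step Proposition \ref{prop31} only yields $\psi(f^{*}f)=\psi(f^{2})<\infty$, and $f\leq Cf^{2}$ fails when $0$ is a non-isolated point of the spectrum, so $\psi(f)<\infty$ does not follow. Second, your final assembly claims every $f\in C_{c}(\mathcal{G})$ is a combination of four \emph{positive elements of} $C_{c}(\mathcal{G})$: the real and imaginary parts do stay in $C_{c}(\mathcal{G})$, but the positive and negative parts are again functional calculus in $C^{*}(\mathcal{G})$ and leave $C_{c}(\mathcal{G})$, so the preceding paragraph does not apply to them.

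Both gaps are closed by the device the paper uses, which keeps all square roots and positive parts inside $C_{c}(\mathcal{G}^{(0)})$, where the product is pointwise and the action is trivial. For $h\in C_{c}(\mathcal{G}^{(0)})_{+}$ one has $\sqrt{h}\in C_{c}(\mathcal{G}^{(0)})_{+}\cap F(C^{*}(\mathcal{G}))\cap D(\alpha^{c}_{-i\beta/2})$ (it is fixed by $\alpha^{c}$), so Proposition \ref{prop31} gives $\psi(h)=\psi(\sqrt{h}^{\,*}\sqrt{h})<\infty$ directly. Then for general $g\in C_{c}(\mathcal{G})$ choose $h$ with $hgh=g$, write $g=\sum_{i=1}^{4}\lambda_{i}b_{i}$ with $b_{i}\in C^{*}(\mathcal{G})_{+}$ arbitrary, and use $hb_{i}h\leq \lVert b_{i}\rVert h^{2}$ together with $\psi(h^{2})<\infty$ to conclude $g=\sum_{i}\lambda_{i}hb_{i}h\in\mathcal{M}_{\psi}$. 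This also shows that the analyticity of all of $C_{c}(\mathcal{G})$, while true, is more than you need here: only the trivially invariant unit-space functions enter.
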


\begin{proof}
If $f\in C_{c}(\Gu)_{+}$ then by definition of the action $\alpha^{c}$ we have that $\sqrt{f} \in C_{c}(\Gu)_{+} \cap D(\alpha^{c}_{-i\beta /2})$. Since the product of functions in $C_{c}(\Gu)$ is pointwise, we have that $C_{c}(\Gu)_{+} \subseteq F(C^{*}(\mathcal{G}))$ and hence $\psi(f)<\infty$ by Proposition \ref{prop31}. For a general function $g\in \CcG$ we can find a positive function $f\in \CcGu_{+}$ such that $g=fgf$. Writing $g=\sum_{i=1}^{4}\lambda_{i} b_{i}$ with $b_{i}\in C^{*}(\G)_{+}$ then since $\psi(fb_{i}f)\leq \lVert b_{i} \rVert \psi(f^{2})<\infty$ we get that
\begin{equation*}
g=fgf=\sum_{i=1}^{4}\lambda_{i} fb_{i}f \in \mathcal{M}_{\psi}
\end{equation*}
which proves the proposition. 
\end{proof}

To extend Neshveyev's theorem to weights let us recall the definiton of \emph{ $\mu$-measurable field of states} from \cite{N}.

\begin{defn}
Let $\mathcal{G}$ be an \'etale groupoid, and let $\mu$ be a regular Borel measure on $\mathcal{G}^{(0)}$. For each $x\in \mathcal{G}^{(0)}$ we let $u_{g}$, $g\in \mathcal{G}_{x}^{x}$ denote the canonical unitary generators of $C^{*}(\mathcal{G}_{x}^{x})$. We call a collection $\{\varphi_{x}\}_{x\in \mathcal{G}^{(0)}}$ a \emph{$\mu$-measurable field of states} if each $\varphi_{x}$ is a state on $C^{*}(\mathcal{G}_{x}^{x})$ and the function
\begin{equation} \label{emumeasure}
\mathcal{G}^{(0)} \ni x \to \sum_{g\in \mathcal{G}_{x}^{x}} f(g) \varphi_{x}(u_{g})
\end{equation}
is $\mu$-measurable for each $f\in C_{c}(\mathcal{G})$.

We identify two $\mu$-measurable fields $\{\varphi_{x}\}_{x\in \mathcal{G}^{(0)}}$ and $\{\varphi_{x}'\}_{x\in \mathcal{G}^{(0)}}$ when $\varphi_{x}'=\varphi_{x}$ for $\mu$-a.e. $x\in \mathcal{G}^{(0)}$.
\end{defn}

We are now ready to state and prove Neshveyev's Theorem for weights. This has already been done for \'etale groupoids $\mathcal{G}$ with $\mathcal{G}^{(0)}$ totally disconnected in Theorem 3.2 in \cite{CT5}, and many of the ideas in the proof of Theorem \ref{tNW} below are inspired by the proof of Theorem 3.2 in \cite{CT5}. Since a lot of the technical details are significantly different we will however write out the proof.

\begin{thm}[Neshveyev's Theorem for Weights] \label{tNW}
Let $\mathcal{G}$ be an \'etale groupoid, let $c: \mathcal{G} \to \mathbb{R}$ be a continuous groupoid homomorphism and let $\beta \in \mathbb{R}$.

There is a bijective correspondence between the $\beta$-KMS weights for $\alpha^{c}$ on $C^{*}(\mathcal{G})$ and the pairs $(\mu, \{ \varphi_{x}\}_{x\in \mathcal{G}^{(0)}})$, where $\mu$ is a regular Borel measure on $\mathcal{G}^{(0)}$ and $\{ \varphi_{x}\}_{x\in \mathcal{G}^{(0)}}$ is a $\mu$-measurable field of states $\varphi_{x}$ on $C^{*}(\mathcal{G}_{x}^{x})$ such that
\begin{enumerate}
\item \label{eb1}$\mu$ is quasi-invariant with Radon-Nikodym cocycle $e^{-\beta c}$.
\item \label{eb2} $\varphi_{x}(u_{g})=\varphi_{r(h)}(u_{hgh^{-1}})$ for $\mu$-a.e. $x\in \mathcal{G}^{(0)}$ and all $g\in \mathcal{G}_{x}^{x}$ and $h\in \mathcal{G}_{x}$.
\item \label{eb3}$\varphi_{x}(u_{g})=0$ for $\mu$-a.e. $x\in \mathcal{G}^{(0)}$ and all $g\in \mathcal{G}_{x}^{x}\setminus c^{-1}(0)$.
\end{enumerate}
The $\beta$-KMS weight $\psi$ corresponding to the pair $(\mu, \{ \varphi_{x}\}_{x\in \mathcal{G}^{(0)}})$ has the property that $C_{c}(\mathcal{G})\subseteq \mathcal{M}_{\psi}$ and it is the unique $\beta$-KMS weight satisfying
\begin{equation}\label{eCcG}
\psi(f) = \int_{\mathcal{G}^{(0)}} \sum_{g\in \mathcal{G}_{x}^{x}} f(g) \varphi_{x}(u_{g}) \ d \mu(x)
\end{equation} 
for all $f\in C_{c}(\mathcal{G})$.
\end{thm}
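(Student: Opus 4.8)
The plan is to build the correspondence with \eqref{eCcG} as the bridge, relying throughout on Proposition \ref{p41}, which gives $\CcG\subseteq\mathcal{M}_{\psi}$ and hence $f\in\mathcal{N}_{\psi}$ for every $f\in\CcG$ (since $f^{*}f\in\CcG\subseteq\mathcal{M}_{\psi}$ forces $\psi(f^{*}f)<\infty$); this is exactly the finiteness that was unavailable in earlier attempts. Starting from a $\beta$-KMS weight $\psi$, I first restrict it to $\CcGu$, where by Proposition \ref{p41} it is a finite positive linear functional, so the Riesz representation theorem yields a regular Borel measure $\mu$ on $\Gu$ with $\psi(h)=\int_{\Gu}h\,d\mu$ for $h\in\CcGu$. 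To obtain quasi-invariance with cocycle $e^{-\beta c}$ I would test the modular identity of Definition \ref{dweight}\eqref{w2} on functions $f\in C_{c}(W)$ supported on a small bisection $W$: a direct computation from \eqref{eprod} shows that both $f^{*}f$ and $\alpha^{c}_{-i\beta/2}(f)\alpha^{c}_{-i\beta/2}(f)^{*}$ are supported in $\Gu$, with
\begin{equation*}
(f^{*}f)(x)=|f(s_{W}^{-1}(x))|^{2},\qquad \bigl(\alpha^{c}_{-i\beta/2}(f)\alpha^{c}_{-i\beta/2}(f)^{*}\bigr)(x)=e^{\beta c(r_{W}^{-1}(x))}|f(r_{W}^{-1}(x))|^{2}.
\end{equation*}
Equating their $\mu$-integrals and approximating characteristic functions of Borel subsets of $W$ gives precisely condition \eqref{3quasi} of Proposition \ref{p33} with $\kappa=e^{-\beta c}$, establishing \eqref{eb1}.

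The core of this direction is the production of the field $\{\varphi_{x}\}$. Using the GNS triple $(H_{\psi},\pi_{\psi},\Lambda_{\psi})$ and the density of $\Lambda_{\psi}(\CcG)$, I would disintegrate $H_{\psi}\cong\int^{\oplus}_{\Gu}H_{x}\,d\mu(x)$ over $(\Gu,\mu)$ so that $\pi_{\psi}(C_{0}(\Gu))$ acts diagonally. Functions supported on small bisections through an isotropy element $g\in\mathcal{G}_{x}^{x}$ then act fibrewise as unitaries $U^{x}_{g}$ implementing a representation of $\mathcal{G}_{x}^{x}$ on $H_{x}$, and with the cyclic vector $\xi_{x}\in H_{x}$ inherited from the approximate identity I would set $\varphi_{x}(u_{g}):=\langle U^{x}_{g}\xi_{x},\xi_{x}\rangle$; $\mu$-measurability of \eqref{emumeasure} is part of the direct-integral data. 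Property \eqref{eb2} is the conjugation-invariance of the fibre states and follows from quasi-invariance together with the groupoid relation underlying Lemma \ref{l34}, while \eqref{eb3} follows from $\alpha^{c}$-invariance: since $\psi\circ\alpha^{c}_{t}=\psi$ and $\alpha^{c}_{t}(f)(g)=e^{itc(g)}f(g)$, the identity
\begin{equation*}
\int_{\Gu}\sum_{g\in\mathcal{G}_{x}^{x}}e^{itc(g)}f(g)\varphi_{x}(u_{g})\,d\mu(x)=\int_{\Gu}\sum_{g\in\mathcal{G}_{x}^{x}}f(g)\varphi_{x}(u_{g})\,d\mu(x)
\end{equation*}
holds for all $t\in\mathbb{R}$, forcing $\varphi_{x}(u_{g})=0$ for $\mu$-a.e.\ $x$ whenever $c(g)\neq0$.

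Conversely, given a pair $(\mu,\{\varphi_{x}\})$ I would define $L$ on $\CcG$ by the right-hand side of \eqref{eCcG}. Positivity of $L(f^{*}f)$ reduces fibrewise to positivity of $\varphi_{x}$ applied to $\sum_{g\in\mathcal{G}_{x}^{x}}(f^{*}f)(g)u_{g}\in C^{*}(\mathcal{G}_{x}^{x})$, which is a positive element by the convolution formula \eqref{eprod} together with \eqref{eb2}. To upgrade $L$ to a genuine $\beta$-KMS weight I would invoke Section \ref{section5}: restricting \eqref{eCcG} to each corner $\mathcal{A}_{n}=\overline{E_{n}\CG E_{n}}$ yields a compatible family of bounded $\beta$-KMS weights $\omega_{n}\in\mathcal{W}^{\beta}_{n}$ obeying the uniform bound \eqref{ebegr}, so Lemma \ref{lem51}\eqref{lembet1} assembles them into an element $\psi\in\mathcal{W}(\beta,\alpha)$; one then checks that $\psi$ restricts to $L$ on $\CcG$ via Corollary \ref{corend}. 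Here the invariance clause of Definition \ref{dweight} comes from \eqref{eb3} and \eqref{eb1}, while the modular clause \eqref{w2} is verified directly on $\CcG$ from \eqref{eprod}, \eqref{eb1} and the cocycle identity for $e^{-\beta c}$.

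Bijectivity then follows because a $\beta$-KMS weight is determined by its restriction to $\CcG$: by Corollary \ref{corend} we have $\psi(x)=\lim_{n}\psi(E_{n}xE_{n})$, and each $E_{n}xE_{n}$ is accessible from $\CcG$ through lower semi-continuity, so any $\beta$-KMS weight satisfying \eqref{eCcG} agrees with the one constructed above, and the pair recovered from it is the original $(\mu,\{\varphi_{x}\})$ up to $\mu$-a.e.\ identification. I expect the main obstacle to be the disintegration step in the forward direction---constructing $U^{x}_{g}$ and the states $\varphi_{x}$ as a genuinely $\mu$-measurable field, defined $\mu$-almost everywhere and independent of the chosen bisections, which demands careful direct-integral and measurable-selection arguments. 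A secondary difficulty is the extension in the backward direction: unlike the bounded setting for KMS states, one cannot appeal to a dual-space compactness argument and must instead reassemble the weight from its corners through Section \ref{section5}.
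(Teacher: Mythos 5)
Your skeleton is right where it is cheap (Riesz representation for $\mu$, quasi-invariance from the modular identity tested on small bisections, uniqueness from Corollary \ref{corend} and Proposition \ref{p45}), but the heart of the forward direction --- producing the $\mu$-measurable field of states --- is exactly the part you defer, and the route you sketch does not obviously work for weights. You propose to disintegrate $H_{\psi}\cong\int^{\oplus}H_{x}\,d\mu(x)$ and set $\varphi_{x}(u_{g})=\langle U^{x}_{g}\xi_{x},\xi_{x}\rangle$ for ``the cyclic vector $\xi_{x}$ inherited from the approximate identity''; but a weight has no cyclic vector, $\mu$ may be infinite, and making the $U^{x}_{g}$ and the $\xi_{x}$ into genuinely measurable, bisection-independent data is precisely the content of Neshveyev's original theorem, which you would thus be re-proving from scratch --- you yourself flag this as the ``main obstacle''. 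The paper sidesteps it entirely: it exhausts $\Gu$ by relatively compact open sets $V_{n}$, restricts $\psi$ to $C^{*}(\G|_{V_{n}})$ where Proposition \ref{p41} makes it a \emph{bounded} functional, normalises by $\mu(V_{n})$ to obtain a $\beta$-KMS \emph{state} $\omega_{n}$, applies Theorem 1.3 of \cite{N} verbatim to get pairs $(\mu_{n},\{\varphi^{n}_{x}\})$, and then glues these together using the uniqueness clause of Neshveyev's theorem and Lemma \ref{l34} to control the null sets. That reduction to the known compact-unit-space state case is the key device of the proof, and it is absent from your plan; without it, conditions \eqref{eb2} and the measurability of the field are not actually established.

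There are two smaller soft spots in the backward direction. First, positivity of $L(f^{*}f)$ does not follow ``fibrewise from the convolution formula'': that $\sum_{g\in\mathcal{G}_{x}^{x}}(f^{*}f)(g)u_{g}$ is positive in $C^{*}(\mathcal{G}_{x}^{x})$ is the nontrivial content of Theorem 1.1 in \cite{N} (each member of the field yields a state $\psi_{x}$ of $C^{*}(\G)$), which the paper cites for exactly this purpose. Second, verifying the modular clause ``directly on $\CcG$ from \eqref{eprod}, \eqref{eb1} and the cocycle identity'' is again Neshveyev's computation; the paper instead checks it by pushing each $f\in\CcG$ into some $C^{*}(\G|_{V_{n}})$ and invoking the state case there, and only then upgrades from $\CcG$ to all of $D(\alpha^{c}_{-i\beta/2})$ using that $\CcG$ is a core together with Lemma \ref{lhj} --- a closure step your outline via Lemma \ref{lem51}\eqref{lembet1} and Corollary \ref{corend} does capture in spirit. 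In short: the proposal is not wrong in outline, but the one genuinely hard step is left as an acknowledged obstacle, and the paper's mechanism for dispatching it is the piece you are missing.
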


\begin{proof}
Since this proof is quite long we will divided it into four steps. Throughout $\{V_{i}\}_{i=1}^{\infty}$ denotes a sequence of open
sets in $\mathcal{G}^{(0)}$ with compact closure $\overline{V_{i}} \subseteq V_{i+1}$ for
each $i$ such that
\begin{equation*}
\mathcal{G}^{(0)}= \bigcup_{i=1}^{\infty} V_{i} \ . 
\end{equation*}
We choose a sequence of functions
$\{E_{n} \}_{n=1}^{\infty} \subseteq C_{c}(\mathcal{G}^{(0)})$ with
the property that $E_{n}(x)=1$ for $x\in \overline{V_{n}}$,
$\supp(E_{n}) \subseteq V_{n+1}$ and with
$0\leq E_{n}(x) \leq 1$ for all $x\in \mathcal{G}^{(0)}$. Then $\{E_{n}\}_{n\in \mathbb{N}}$ is an
approximate identity on $C^{*}(\mathcal{G})$ contained in $C^{*}(\mathcal{G})^{\alpha^{c}}$. For each $n\in \mathbb{N}$ we let $\G_{n}$ denote the open subgroupoid
\begin{equation*}
\mathcal{G}_{n}:=\mathcal{G}|_{V_{n}}=\{g\in \G \ : \ r(g),s(g)\in V_{n}\} \ ,
\end{equation*}
and we let $\iota_{n}$ denote the $*$-homomorphism $\iota_{n}: C^{*}(\G_{n})\to C^{*}(\G)$ described in Section \ref{sec22}.\newline

\paragraph{\textbf{ Step $1$: Every pair $(\mu, \{\varphi_{x}\}_{x\in \mathcal{G}^{(0)}})$ gives rise to a $\beta$-KMS weight}} \hfill

\noindent Assume $(\mu, \{\varphi_{x}\}_{x\in \mathcal{G}^{(0)}})$ satisfies \eqref{eb1} -- \eqref{eb3}. If $\mu=0$ we associate the $\beta$-KMS weight $\psi=0$ to the pair, so assume that $\mu \neq0$. Then every $x\in \mathcal{G}^{(0)}$ gives rise to a state $\psi_{x}$ on $C^{*}(\mathcal{G})$ such that
\begin{equation*}
\psi_{x}(f) =\sum_{g\in \mathcal{G}_{x}^{x}} f(g) \varphi_{x}(u_{g})
\end{equation*}
for all $f\in C_{c}(\mathcal{G})$, c.f. Theorem 1.1 in \cite{N}. Since $x\to \psi_{x}(f)$ is $\mu$-measurable for $f\in C_{c}(\mathcal{G})$, so is $x\to \psi_{x}(a)$ for all $a\in C^{*}(\mathcal{G})$. For $a \in C^{*}(\mathcal{G})_{+}$ we can define
\begin{equation*}
\psi(a)=\int_{\mathcal{G}^{(0)}} \psi_{x}(a) \ d\mu(x)
\end{equation*}
Then $\psi$ is a non-zero weight on $C^{*}(\mathcal{G})$. Fatou's lemma implies that $\psi$ is lower semi-continuous and for $f\in C_{c}(\mathcal{G}^{(0)})_{+}$ then
\begin{equation*}
\lvert \psi(f) \rvert\leq \int_{\mathcal{G}^{(0)}} \lvert \psi_{x}(f) \rvert \ d\mu(x)
\leq \lVert f \rVert \mu(\supp(f)) < \infty \ .
\end{equation*}
In conclusion $\psi$ is a non-zero proper weight, and as in the proof of Proposition \ref{p41} one gets $C_{c}(\mathcal{G}) \subseteq \mathcal{M}_{\psi}$. For a sequence $\{V_{n}\}_{n=1}^{\infty}$ as introduced in the beginning, we can assume $0 < \mu(V_{n}) < \infty$ for all $n$. Setting $c_{n}:=c|_{\G_{n}}$ then $(\mu(V_{n})^{-1} \mu, \{\varphi_{x} \}_{x\in V_{n}})$ gives a $\beta$-KMS state $\omega_{n}$ on $C^{*}(\mathcal{G}_{n})$ for $\alpha^{c_{n}}$ by Theorem 1.3 in \cite{N}. For any $f\in C_{c}(\mathcal{G})$, there is a $n\in \mathbb{N}$ such that $f=\iota_{n}(f|_{\mathcal{G}_{n}})$, and we get
\begin{align*}
\psi(f^{*}f)&= \int_{\mathcal{G}^{(0)}} \sum_{g\in \mathcal{G}_{x}^{x}} (f^{*}f)(g) \varphi_{x}(u_{g}) \ d\mu(x) \\
&=\mu(V_{n})\int_{V_{n}} \sum_{g\in \mathcal{G}_{x}^{x}} (f^{*}f)(g) \varphi_{x}(u_{g}) \ d(\mu(V_{n})^{-1}\mu)(x) \\
&=\mu(V_{n}) \cdot \omega_{n}((f|_{\mathcal{G}_{n}})^{*} (f|_{\mathcal{G}_{n}}))
=\mu(V_{n}) \cdot \omega_{n}(\alpha^{c_{n}}_{-i\beta /2}(f|_{\mathcal{G}_{n}})
 \alpha^{c_{n}}_{-i\beta /2}(f|_{\mathcal{G}_{n}})^{*}) \\
&= \int_{V_{n}} \sum_{g\in \mathcal{G}_{x}^{x}} (\alpha^{c_{n}}_{-i\beta /2}(f|_{\mathcal{G}_{n}})
 \alpha^{c_{n}}_{-i\beta /2}(f|_{\mathcal{G}_{n}})^{*})(g) \varphi_{x}(u_{g}) \ d \mu(x) \\
&=\int_{V_{n}} \sum_{g\in \mathcal{G}_{x}^{x}} (\alpha^{c}_{-i\beta /2}(f)
 \alpha^{c}_{-i\beta /2}(f)^{*})(g) \varphi_{x}(u_{g}) \ d\mu(x) \\ 
 &=\psi(\alpha^{c}_{-i\beta /2}(f)
 \alpha^{c}_{-i\beta /2}(f)^{*}) \ .
\end{align*}
So we have proved that $\psi$ satisfies the $\beta$-KMS condition on
$C_{c}(\mathcal{G})$, yet to prove that it is a $\beta$-KMS weight we need to
prove this equality for all $a\in D(\alpha^{c}_{-i \beta
  /2})$. By \eqref{e1par} it follows that $\CcG$ consists of analytic elements for $\alpha^{c}$. Since $\alpha_{t}^{c}(\CcG)\subseteq \CcG$ for each $t\in \mathbb{R}$ then Corollary 1.22 in \cite{K2} implies that $\CcG$ is a core for $\alpha_{-i \beta /2}^{c}$. Hence for a $a \in D(\alpha_{-i \beta /2}^{c})$
we can find a sequence
$\{ f_{m} \}_{m\in \mathbb{N}} \subseteq C_{c}(\mathcal{G})$ such that
$f_{m} \to a$ and
$\alpha_{-i \beta /2}^{c}(f_{m}) \to \alpha_{-i \beta /2}^{c}(a)$ in
norm. For our approximate identity $\{E_{n}\}_{n\in \mathbb{N}}$ then
\begin{equation*}
\psi(E_{l}f_{m}^{*}E_{n}^{2}f_{m}E_{l})=\psi((E_{n}f_{m}E_{l})^{*}(E_{n}f_{m}E_{l}))=\psi(E_{n}\alpha^{c}_{-i\beta /2}(f_{m})E_{l}^{2}
 \alpha^{c}_{-i\beta /2}(f_{m})^{*}E_{n})
\end{equation*} 
for all $l,n \in \mathbb{N}$. Letting $m\to \infty$ we get
\begin{equation} \label{einfty}
\psi(E_{l}a^{*}E_{n}^{2}aE_{l})
=\psi(E_{n}\alpha^{c}_{-i\beta /2}(a)E_{l}^{2}
 \alpha^{c}_{-i\beta /2}(a)^{*}E_{n}) \ .
\end{equation}
Combining condition \eqref{eb3} and the definition of $\psi_{x}$ it follows that $\psi  \circ \alpha^{c}_{t} = \psi$ for each $t \in \mathbb{R}$, and since $\psi_{x}(E_{n}aE_{n})=\psi_{x}(a)E_{n}(x)^{2}$ we get by definition of $\psi$ that $\lim_{n\to \infty}\psi(E_{n}aE_{n})=\psi(a)$ for all $a\in C^{*}(\mathcal{G})_{+}$, so \eqref{einfty} and Lemma \ref{lhj} imply that $\psi$ is a $\beta$-KMS weight for $\alpha^{c}$.\newline

\paragraph{\textbf{Step $2$: The formula \eqref{eCcG} defines a unique $\beta$-KMS weight.}} \hfill

\noindent This follows if we can prove $\psi$ constructed in step $1$ is the
only $\beta$-KMS weight for $\alpha^{c}$ satisfying~\eqref{eCcG}. Let
$\psi'$ be a $\beta$-KMS weight for $\alpha^{c}$ that agrees with
$\psi$ on $C_{c}(\mathcal{G})$, and let
$\{ E_{n} \}_{n=1}^{\infty} \subseteq C_{c}(\mathcal{G}^{(0)})$ be the approximate identity defined in the beginning of the proof. Since both $\psi$ and $\psi'$ are finite on $E_{n+1}^{2}$, and since they agree on $E_{n}C_{c}(\mathcal{G})E_{n}$, $\psi$ and $\psi'$ agree on the sub-$C^{*}$-algebra $\overline{E_{n}C^{*}(\mathcal{G})E_{n}}$ for all $n\in \mathbb{N}$, and hence $\psi=\psi'$ by injectivity of the map in Proposition \ref{p45}. \newline

\paragraph{\textbf{Step $3$: Associating a pair $(\mu, \{\varphi_{x}\}_{x\in \mathcal{G}^{(0)}})$ to a weight $\psi$}} \hfill

\noindent Since $\CcG \subseteq \mathcal{M}_{\psi}$ by Proposition \ref{p41} then $\psi$ is a positive linear functional on $\CcGu$, so by the Riesz Representation Theorem there is a unique regular Borel measure $\mu$ on $\mathcal{G}^{(0)}$ such that
\begin{equation*}
\psi(f)=\int_{\mathcal{G}^{(0)}} f(x) \ d \mu(x) \quad \text{ for all } f\in C_{c}(\mathcal{G}^{(0)}) \ .
\end{equation*}
If $\mu=0$ then $\psi=0$ on $C_{c}(\mathcal{G})$, and by lower semi-continuity then $\psi=0$ and we are done. Assume therefore that $\mu \neq 0$, then for the sequence $\{V_{n}\}_{n=1}^{\infty}$ as introduced in the beginning of the proof we can assume that $0<\mu(V_{n})<\infty$ for all $n$. For each $n\in \mathbb{N}$ we define $\omega_{n}$ on $C^{*}(\mathcal{G}_{n})$ by
\begin{equation*}
\omega_{n}(a)=\mu(V_{n})^{-1} \psi(\iota_{n}(a)) \ .
\end{equation*}
Since $E_{n}\iota_{n}(a)E_{n}=\iota_{n}(a)$ for all $a\in C^{*}(\mathcal{G}_{n})$ we get that $\psi(\iota_{n}(a))=\psi(E_{n}\iota_{n}(a)E_{n})<\infty$ for all
$a\in C^{*}(\mathcal{G}_{n})_{+}$, proving that $\omega_{n}$ is a
positive linear functional, and by definition of $\mu$ then $\omega_{n}$ is a state. Since
$c_{n}=c|_{\mathcal{G}_{n}}$ is a continuous groupoid homomorphism on
$\mathcal{G}_{n}$ and
$\iota_{n} \circ\alpha^{c_{n}}=\alpha^{c}\circ \iota_{n}$, we get that
$\omega_{n}$ is a $\beta$-KMS state for $\alpha^{c_{n}}$ on
$C^{*}(\mathcal{G}_{n})$. Using Theorem~1.3 in \cite{N}, we get a
regular Borel probability measure $\mu_{n}$ on
$V_{n}=\mathcal{G}_{n}^{(0)}$ and a $\mu_{n}$-measurable field of
states $\{ \varphi_{x}^{n} \}_{x\in V_{n}}$ such that:
\begin{itemize}
\item[$a_{n}$)] $\mu_{n}$ is quasi-invariant on $\mathcal{G}_{n}$ with Radon-Nikodym cocycle $e^{-\beta c_{n}}$,
\item[$b_{n}$)] $\varphi^{n}_{x}(u_{g})=\varphi^{n}_{r(h)}(u_{hgh^{-1}})$ for $\mu_{n}$-a.e. $x\in V_{n}$ and all $g\in (\mathcal{G}_{n})^{x}_{x}=\mathcal{G}^{x}_{x}$, $h\in (\mathcal{G}_{n})_{x}$, 
\item[$c_{n}$)] $\varphi^{n}_{x}(u_{g})=0$ for $\mu_{n}$-a.e. $x\in V_{n}$ and all $g\in \mathcal{G}_{x}^{x} \setminus c_{n}^{-1}(0)$,
\end{itemize}
such that for $f\in C_{c}(\mathcal{G}_{n})$ we have
\begin{equation*}
\mu(V_{n})^{-1} \psi(\iota_{n}(f)) =\int_{V_{n}} \sum_{g\in \mathcal{G}_{x}^{x}} f(g) \varphi^{n}_{x}(u_{g}) \ d \mu_{n} (x) \ .
\end{equation*}
For every function $f\in C_{c}(V_{n})$ we have
\begin{equation*}
\mu(V_{n})^{-1} \int_{V_{n}} f(x) \ d\mu (x)
=\mu(V_{n})^{-1} \psi(\iota_{n}(f)) 
=\int_{V_{n}} f(x) \ d\mu_{n}(x) \ ,
\end{equation*}
so $\mu |_{V_{n}}=\mu(V_{n}) \mu_{n}$, which implies that $\mu_{n} = \mu(V_{n+1})\cdot \mu(V_{n})^{-1} \mu_{n+1} |_{V_{n}}$. Any
small bisection $W$ satisfies $W\subseteq \mathcal{G}_{n}$ for all
sufficiently large $n$, so by Proposition~\ref{p33} $\mu$ satisfies
\eqref{eb1} since $\mu_{n}$ satisfies $a_{n})$ for each $n$. We can extend any $f\in C_{c}(\mathcal{G}_{n})$ by zero to a function $f'\in  C_{c}(\mathcal{G}_{n+1})$ and then
\begin{align*}
  &\int_{V_{n}} \sum_{g\in \mathcal{G}_{x}^{x}} f(g)
  \varphi_{x}^{n} (u_{g}) \ d \mu_{n}(x) = \mu(V_{n})^{-1} \psi(
  \iota_{n}(f))
  \\
  &=\frac{\mu(V_{n+1})}{\mu(V_{n})} \int_{V_{n+1}} \sum_{g\in
    \mathcal{G}_{x}^{x}} f'(g) \varphi_{x}^{n+1} (u_{g}) \ d
  \mu_{n+1}(x)
  \\
  &=\int_{V_{n}} \sum_{g\in \mathcal{G}_{x}^{x}} f(g)
  \varphi_{x}^{n+1} (u_{g}) \ d \mu_{n}(x) \ .
\end{align*}
Since $\mu(N)=0$ for $N\subseteq V_{n}$ iff $\mu_{n}(N)=0$ then
$\{\varphi_{x}^{n+1}\}_{x\in V_{n}}$ satisfies $b_{n})$ and $c_{n})$,
so injectivity of the map in Theorem 1.3 in \cite{N} implies that
$\varphi_{x}^{n+1}=\varphi_{x}^{n}$ for $\mu_{n}$-almost all
$x\in V_{n}$. Let $N_{n} \subseteq V_{n}$ be a Borel set with
$\mu_{n}(N_{n})=0$ and $\varphi_{x}^{n+1}=\varphi_{x}^{n}$ for
$x\in V_{n}\setminus N_{n}$, and set $N=s(r^{-1}(\bigcup_{n}
N_{n}))$. Then $N$ is Borel and $\mu(N)=0$ by
Lemma~\ref{l34}. Let $Tr_{x}$ be the
canonical trace on $C^{*}(\mathcal{G}_{x}^{x})$, i.e. the trace with $\text{Tr}_{x}(u_{g})=0$ for $g\neq x$, and set
\begin{equation*}
\varphi_{x} =
\begin{cases}
\varphi_{x}^{n}  &  \text{if } x\in V_{n} \setminus N \text{ for some } n\ , \\
Tr_{x}  &  \text{if } x\in N \ .
\end{cases}
\end{equation*}
This is well defined by choice of $N$, and it is straightforward to
check that $\{\varphi_{x}\}_{x\in  \mathcal{G}^{(0)}}$ satisfies \eqref{eb2} and \eqref{eb3}. Combining that any $f\in C_{c}(\mathcal{G})$ satisfies
$\supp(f)\subseteq \mathcal{G}_{n}$ for large $n$ and that
$\{\varphi_{x}^{n}\}_{x\in V_{n}}$ is $\mu_{n}$-measurable it follows that $\{\varphi_{x}\}_{x\in \mathcal{G}^{(0)}}$ is $\mu$-measurable. For $f\in C_{c}(\mathcal{G})$ there exists a
$n$ such that $f=\iota_{n}(f |_{\mathcal{G}_{n}})$, and hence
\begin{align*}
\psi(f) &= \psi( \iota_{n}(f |_{\mathcal{G}_{n}}))
=\mu(V_{n}) \int_{V_{n}} \sum_{g\in \mathcal{G}_{x}^{x}} f(g) \varphi^{n}_{x}(u_{g}) \ d \mu_{n} (x)\\
&=\int_{\mathcal{G}^{(0)}} \sum_{g\in \mathcal{G}_{x}^{x}} f(g) \varphi_{x}(u_{g}) \ d \mu (x)
\end{align*}
which proves that to any $\beta$-KMS weight $\psi$ corresponds a pair $(\mu, \{\varphi_{x}\}_{x\in \mathcal{G}^{(0)}})$. \newline

\paragraph{\textbf{Step $4$: The map is a bijection.}} \hfill

\noindent By step $1$ and step $2$ the map is well defined, and by step $3$ it is surjective. So to prove the Theorem we only need to prove that the map is injective, but this follows exactly as in the last paragraph of the proof of Theorem 3.2 in \cite{CT5}.
\end{proof}


\begin{remark}
The bijection in Theorem \ref{tNW} restricts to a bijection between KMS states and the pairs $(\mu, \{\varphi_{x}\}_{x\in \mathcal{G}^{(0)}})$ where $\mu$ is a probability measure, which makes the analogy in the introduction strikingly accurate in our setting.
\end{remark}

\begin{remark} \label{rem}As observed in \cite{N}, condition~\eqref{eb3} in
  Theorem~\ref{tNW} is automatically satisfied when $\beta \neq 0$,
  because a quasi-invariant measure $\mu$ with Radon-Nikodym cocycle
  $e^{-\beta c}$ will automatically be concentrated on the
  $x\in \mathcal{G}^{(0)}$ with
  $\mathcal{G}_{x}^{x} \subseteq c^{-1}(0)$. To see this, set $M=\left \{g\in \mathcal{G} : c(g)>0 \text{ and } r(g)=s(g) \right \}$ and notice that
\begin{equation*}
s(M)=\{ x\in \mathcal{G}^{(0)} \ : \ \mathcal{G}_{x}^{x} \nsubseteq c^{-1}(0) \}  
\end{equation*}
is Borel, and that for all small bisections $W$ then
\begin{equation*}
\mu(s(M\cap W)) = \int_{r(M\cap W)} e^{\beta c (r_{W}^{-1}(x))} \ d\mu(x) .
\end{equation*}
Since $r(M\cap W)=s(M\cap W)$ then $\mu(s(M\cap W))=0$ when $\beta \neq 0$, proving that $\mu(s(M))=0$.
\end{remark}

Theorem \ref{tNW} provides answers to some open questions in the literature, which we will answer below in Corollary \ref{cor66} and Theorem \ref{nysaet}, but first we will spell out a technical remark in Corollary \ref{lcircP}. As in the proof of Theorem \ref{tNW} we let $\text{Tr}_{x}$ denote the canonical trace on $C^{*}(\mathcal{G}_{x}^{x}) $ for $x\in \mathcal{G}^{(0)}$.

\begin{cor} \label{lcircP}
Let $\mathcal{G}$ be an \'etale groupoid, let $c:\mathcal{G} \to \mathbb{R}$ be a continuous groupoid homomorphism and let $\beta \in \mathbb{R}$. If $\psi$ is a $\beta$-KMS weight on $C^{*}(\mathcal{G})$ for $\alpha^{c}$ given by $(\mu, \{\varphi_{x}\}_{x\in \mathcal{G}^{(0)}})$ then $\psi=\psi\circ P$ if and only if $\varphi_{x}=\text{Tr}_{x}$ for $\mu$-a.e. $x\in \mathcal{G}^{(0)}$.
\end{cor}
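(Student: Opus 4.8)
The plan is to show that for \emph{any} $\beta$-KMS weight $\psi$ with associated pair $(\mu,\{\varphi_x\})$ one has $\psi\circ P=\psi_{\text{Tr}}$, where $\psi_{\text{Tr}}$ denotes the $\beta$-KMS weight associated by Theorem~\ref{tNW} to the pair $(\mu,\{\text{Tr}_x\})$. Granting this, the corollary is immediate, since $\psi=\psi\circ P$ holds if and only if $\psi=\psi_{\text{Tr}}$, which by the bijectivity in Theorem~\ref{tNW} is equivalent to $\varphi_x=\text{Tr}_x$ for $\mu$-a.e.\ $x$. As a preliminary step I would check that the pair $(\mu,\{\text{Tr}_x\})$ satisfies \eqref{eb1}--\eqref{eb3}: condition \eqref{eb1} is inherited from $\psi$, while \eqref{eb2} and \eqref{eb3} follow from $\text{Tr}_x(u_g)=0$ for $g\neq x$ together with $c(x)=0$ for every unit $x$. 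Hence $\psi_{\text{Tr}}$ is a well-defined $\beta$-KMS weight.

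Next I would observe that $\psi\circ P$ depends on $\psi$ only through its restriction to $C_0(\mathcal{G}^{(0)})$. By construction of $\mu$ the weight $\psi$ agrees with integration against $\mu$ on $C_c(\mathcal{G}^{(0)})$, and the same holds for $\psi_{\text{Tr}}$ because $f(g)=0$ for $f\in C_c(\mathcal{G}^{(0)})$ and non-units $g$; using $E_nhE_n=E_n^2h\in C_c(\mathcal{G}^{(0)})$ and Corollary~\ref{corend} this equality extends to all $h\in C_0(\mathcal{G}^{(0)})_+$. Since $P$ takes values in $C_0(\mathcal{G}^{(0)})$, it follows that $\psi\circ P=\psi_{\text{Tr}}\circ P$ as weights, so it remains to prove the single identity $\psi_{\text{Tr}}\circ P=\psi_{\text{Tr}}$.

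To prove this I would verify that $\omega:=\psi_{\text{Tr}}\circ P$ is a $\beta$-KMS weight by means of Lemma~\ref{lhj}, using the approximate identity $\{E_n\}\subseteq C_c(\mathcal{G}^{(0)})$. Lower semicontinuity, $\alpha^c$-invariance (because $P\circ\alpha^c_t=P$, as $c$ vanishes on units), finiteness $\omega(E_n^2)=\int E_n^2\,d\mu<\infty$, and the relation $\omega(x)=\lim_n\omega(E_nxE_n)$ (from $P(E_nxE_n)=E_nP(x)E_n$ and Corollary~\ref{corend}) are all routine. The heart of the matter is the KMS relation \eqref{ehj}. Since $b\mapsto\psi_{\text{Tr}}(E_nP(b)E_n)$ is norm-bounded and $C_c(\mathcal{G})$ is a core for $\alpha^c_{-i\beta/2}$, it suffices to verify \eqref{ehj} for $a=f\in C_c(\mathcal{G})$. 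Here both sides can be computed explicitly: using $(E*h)(g)=E(r(g))h(g)$ and $(h*E)(g)=h(g)E(s(g))$ for $E\in C_c(\mathcal{G}^{(0)})$, evaluating the convolutions at units, and converting the isotropy sums into integrals via \eqref{eny1}, the left-hand side of \eqref{ehj} becomes
\begin{equation*}
\int_{\mathcal{G}} E_n(s(g))^2 E_m(r(g))^2 |f(g)|^2 \, d\mu_s(g),
\end{equation*}
while, using $\alpha^c_{-i\beta/2}(f)(g)=e^{\beta c(g)/2}f(g)$, the right-hand side becomes
\begin{equation*}
\int_{\mathcal{G}} E_n(s(g))^2 E_m(r(g))^2 |f(g)|^2 e^{\beta c(g)} \, d\mu_r(g).
\end{equation*}
These two integrals agree precisely because condition \eqref{eb1} gives $d\mu_s=e^{\beta c}\,d\mu_r$. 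By Lemma~\ref{lhj}, $\omega$ is therefore a $\beta$-KMS weight, and since $\omega$ and $\psi_{\text{Tr}}$ both equal $\int f(x)\,d\mu(x)$ on $C_c(\mathcal{G})$, the uniqueness coming from Proposition~\ref{p45} (exactly as in Step~$2$ of the proof of Theorem~\ref{tNW}) yields $\omega=\psi_{\text{Tr}}$.

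I expect the main obstacle to be the explicit convolution computation of the two sides of \eqref{ehj} together with the bookkeeping of the source and range maps needed to rewrite the unit-space isotropy sums as integrals against $\mu_s$ and $\mu_r$; once both sides are brought into the displayed form, the quasi-invariance identity closes the argument in one line. A secondary technical point to handle carefully is the reduction of \eqref{ehj} from a general $a\in D(\alpha^c_{-i\beta/2})$ to $f\in C_c(\mathcal{G})$, which rests on the norm-continuity of $b\mapsto\psi_{\text{Tr}}(E_nP(b)E_n)$ and on $C_c(\mathcal{G})$ being a core for $\alpha^c_{-i\beta/2}$.
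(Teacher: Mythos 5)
Your proposal is correct, and its overall skeleton matches the paper's: both directions ultimately reduce to the uniqueness statement of Theorem~\ref{tNW} (two $\beta$-KMS weights agreeing on $C_{c}(\mathcal{G})$ coincide) together with the single identity $\psi_{\mathrm{Tr}}\circ P=\psi_{\mathrm{Tr}}$. Where you genuinely diverge is in how that identity is established. The paper proves it by a short continuity argument: $\psi_{\mathrm{Tr}}(f)=\psi_{\mathrm{Tr}}(P(f))$ is immediate from \eqref{eCcG} since $\mathrm{Tr}_{x}(u_{g})=0$ for $g\neq x$, and this is propagated to all of $C^{*}(\mathcal{G})_{+}$ by approximating $a$ in norm by $h_{n}\in C_{c}(\mathcal{G})$, using that $b\mapsto\psi_{\mathrm{Tr}}(E_{m}bE_{m})$ and $P$ are norm-continuous and then letting $m\to\infty$ via Corollary~\ref{corend}. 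You instead verify from scratch that $\omega:=\psi_{\mathrm{Tr}}\circ P$ is a $\beta$-KMS weight by checking the hypotheses of Lemma~\ref{lhj}, with the KMS relation \eqref{ehj} on $C_{c}(\mathcal{G})$ reduced by the convolution identities to the quasi-invariance equation $d\mu_{s}=e^{\beta c}\,d\mu_{r}$, and then invoke uniqueness once more. Your computation is correct (I checked the two displayed integrals against \eqref{eny1} and the cocycle convention $d\mu_{r}/d\mu_{s}=e^{-\beta c}$), and your framing of the whole corollary through the single claim $\psi\circ P=\psi_{\mathrm{Tr}}$ for \emph{every} weight with measure $\mu$ is a clean packaging that dispatches both directions at once. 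The trade-off is cost: the paper's route gets $\psi_{\mathrm{Tr}}\circ P=\psi_{\mathrm{Tr}}$ in three lines by exploiting that $\psi_{\mathrm{Tr}}$ is already known to be a KMS weight, whereas your route redoes a chunk of Step~1 of the proof of Theorem~\ref{tNW} (the core argument for $\alpha^{c}_{-i\beta/2}$ and the Lemma~\ref{lhj} checklist); what it buys is an argument that does not presuppose $\psi_{\mathrm{Tr}}\circ P=\psi_{\mathrm{Tr}}$ follows from the explicit formula, and makes visible exactly where quasi-invariance enters.
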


\begin{proof}
  Theorem~\ref{tNW} implies that if two $\beta$-KMS weights agree on
  $C_{c}(\mathcal{G})$ then they are equal, but if
  $\psi \circ P =\psi$ then $\psi$ agrees with the $\beta$-KMS weight given by
  $(\mu, \{\text{Tr}_{x}\}_{x\in
    \mathcal{G}^{(0)}})$ on $\CcG$, proving one direction. Assume now that
  $\psi$ is given by
  $(\mu, \{\text{Tr}_{x}\}_{x\in
    \mathcal{G}^{(0)}})$ and consider $a\in
  C^{*}(\mathcal{G})$. Clearly $\psi(f)=\psi(P(f))$ for all
  $f\in C_{c}(\mathcal{G})$. Let
  $\{E_{m}\}_{m=1}^{\infty} \subseteq C_{c}(\mathcal{G}^{(0)})_{+}$ be an approximate identity as in the proof of Theorem \ref{tNW}, and assume that
  $\{h_{n}\}_{n=1}^{\infty} \subseteq C_{c}(\mathcal{G})$ is a
  sequence with $h_{n} \to a$ in norm. Then
\begin{align*}
\psi(E_{m}a^{*}aE_{m})=\lim_{n} \psi(E_{m}h_{n}^{*}h_{n}E_{m})= \lim_{n} \psi(E_{m}P(h_{n}^{*}h_{n})E_{m})= \psi(E_{m}P(a^{*}a)E_{m})
\end{align*}
for each $m\in \mathbb{N}$. By definition of $\psi$ then
\begin{equation*}
\psi(a^{*}a)=\lim_{m} \psi(E_{m}a^{*}aE_{m})=\lim_{m} \psi(E_{m}P(a^{*}a)E_{m})=\psi(P(a^{*}a))
\end{equation*}
proving the corollary.
\end{proof}

Combining Corollary \ref{lcircP}, Remark \ref{rem} and Theorem \ref{tNW} we get the following Corollary, which gives an affirmative answer to the question raised after Corollary 2.3 in \cite{T3}.

\begin{cor} \label{cor66}
Let $\G$ be an \'etale groupoid, let $c:\G\to\mathbb{R}$ be a continuous groupoid homomorphism with $\text{Ker}(c)\cap \G_{x}^{x}=\{x\}$ for all $x\in \G^{(0)}$ and let $\beta \neq 0$.

The following three sets are in a bijective correspondence:
\begin{enumerate}
\item \label{fjol1}The quasi-invariant measures on $\mathcal{G}^{(0)}$ with Radon-Nikodym cocycle $e^{-\beta c}$,
\item \label{fjol2}the $\beta$-KMS weights for $\alpha^{c}$ on $C^{*}(\G)$, and
\item \label{fjol3} the $\beta$-KMS weights for $\alpha^{c}$ on $C_{r}^{*}(\G)$.
\end{enumerate}
\end{cor}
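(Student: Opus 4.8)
The plan is to read off the shape of the relevant weights from Theorem~\ref{tNW} together with Remark~\ref{rem} and Corollary~\ref{lcircP}, and then to transport them between the full and reduced algebras via the canonical surjection. First I would establish \eqref{fjol1}$\leftrightarrow$\eqref{fjol2}. By Theorem~\ref{tNW} a $\beta$-KMS weight for $\alpha^{c}$ on $C^{*}(\mathcal{G})$ is the same datum as a pair $(\mu,\{\varphi_{x}\})$ satisfying \eqref{eb1}--\eqref{eb3}. Since $\beta\neq 0$, Remark~\ref{rem} shows any quasi-invariant $\mu$ with cocycle $e^{-\beta c}$ is concentrated on $\{x:\mathcal{G}_{x}^{x}\subseteq c^{-1}(0)\}$, and the hypothesis $\operatorname{Ker}(c)\cap\mathcal{G}_{x}^{x}=\{x\}$ forces this set to equal $\{x:\mathcal{G}_{x}^{x}=\{x\}\}$. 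For such $x$ the isotropy group is trivial, so $C^{*}(\mathcal{G}_{x}^{x})=\mathbb{C}$ has a unique state, which is $\text{Tr}_{x}$; thus the field $\{\varphi_{x}\}$ is $\mu$-a.e.\ forced to be $\{\text{Tr}_{x}\}$ and conditions \eqref{eb2}--\eqref{eb3} are automatic. Conversely, pairing any quasi-invariant $\mu$ with $\{\text{Tr}_{x}\}$ gives a valid pair. Hence $(\mu,\{\varphi_{x}\})\mapsto\mu$ is a bijection onto the quasi-invariant measures, which gives \eqref{fjol1}$\leftrightarrow$\eqref{fjol2}.

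Next I would set up \eqref{fjol2}$\leftrightarrow$\eqref{fjol3} through the canonical surjection $\lambda:C^{*}(\mathcal{G})\to C^{*}_{r}(\mathcal{G})$, which is the identity on $C_{c}(\mathcal{G})$ and intertwines the two copies of $\alpha^{c}$. Pulling back along $\lambda$ sends a $\beta$-KMS weight $\phi$ on $C^{*}_{r}(\mathcal{G})$ to $\phi\circ\lambda$ on $C^{*}(\mathcal{G})$; since $\lambda$ is a surjective $*$-homomorphism carrying analytic elements to analytic elements and $C_{c}(\mathcal{G})\subseteq\mathcal{N}_{\phi}$, the weight $\phi\circ\lambda$ is again a proper $\beta$-KMS weight, and the assignment is injective because $\lambda$ is onto. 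For surjectivity I would use that, by the first part and Corollary~\ref{lcircP}, every $\beta$-KMS weight $\psi$ on $C^{*}(\mathcal{G})$ satisfies $\psi=\psi\circ P$, where $P$ is the canonical expectation onto $C_{0}(\mathcal{G}^{(0)})$. Writing $P=P_{r}\circ\lambda$ for the reduced expectation $P_{r}$ (they agree on the dense subalgebra $C_{c}(\mathcal{G})$), and letting $\mu$ be the measure representing $\psi$ on $C_{0}(\mathcal{G}^{(0)})$, I would define $\phi(b):=\int_{\mathcal{G}^{(0)}}P_{r}(b)\,d\mu$ for $b\in C^{*}_{r}(\mathcal{G})_{+}$, so that $\phi\circ\lambda=\psi$ by construction.

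The main obstacle is to verify that this $\phi$ really is a $\beta$-KMS weight on $C^{*}_{r}(\mathcal{G})$. Lower semicontinuity, $\alpha^{c}$-invariance (because $c$ vanishes on units, so $\alpha^{c}_{t}$ fixes $C_{0}(\mathcal{G}^{(0)})$ and commutes with $P_{r}$), finiteness on the $E_{n}^{2}$, and the identity $\phi(x)=\lim_{n}\phi(E_{n}xE_{n})$ all follow from routine monotone-convergence arguments using an approximate unit $\{E_{n}\}\subseteq C_{c}(\mathcal{G}^{(0)})$. For the KMS property I would invoke Lemma~\ref{lhj}, so that it suffices to check \eqref{ehj} for $\phi$. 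Since $\phi$ and $\psi$ agree on $C_{c}(\mathcal{G})$ (where $\lambda$ is the identity) and $\psi$ already satisfies \eqref{ehj}, the identity holds for $\phi$ whenever $a\in C_{c}(\mathcal{G})$; one then extends to all $a\in D(\alpha^{c}_{-i\beta/2})$ exactly as in Step~$1$ of the proof of Theorem~\ref{tNW}, using that $C_{c}(\mathcal{G})$ is a core for $\alpha^{c}_{-i\beta/2}$ and that $y\mapsto\phi(E_{n}yE_{n})$ is norm-continuous. Composing the two bijections yields \eqref{fjol1}$\leftrightarrow$\eqref{fjol3}, and the three correspondences are coherent since each weight restricts on $C_{0}(\mathcal{G}^{(0)})$ to integration against the same $\mu$.
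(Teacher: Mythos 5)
Your proof is correct and follows the same overall structure as the paper's. The bijection between \eqref{fjol1} and \eqref{fjol2} is obtained exactly as in the paper, from Theorem~\ref{tNW} together with Remark~\ref{rem}: the hypothesis $\operatorname{Ker}(c)\cap\mathcal{G}_{x}^{x}=\{x\}$ forces the isotropy to be trivial $\mu$-almost everywhere, so the field of states is uniquely determined and the pair collapses to the measure. The bijection between \eqref{fjol2} and \eqref{fjol3} is likewise set up through the pullback along the canonical surjection, whose injectivity is immediate from surjectivity of the quotient map.

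The one place you genuinely diverge is the surjectivity of the pullback. The paper disposes of this in a single sentence by citing Proposition~2.1 of \cite{T3}, which supplies the fact that integrating the reduced conditional expectation against a quasi-invariant measure with Radon--Nikodym cocycle $e^{-\beta c}$ yields a $\beta$-KMS weight on $C^{*}_{r}(\mathcal{G})$. You instead prove this directly: you define $\phi=\int_{\mathcal{G}^{(0)}}P_{r}(\cdot)\,d\mu$, check lower semi-continuity, invariance and $\phi(x)=\lim_{n}\phi(E_{n}xE_{n})$ by monotone convergence, verify the identity \eqref{ehj} on $C_{c}(\mathcal{G})$ by transferring it from $\psi$ (which agrees with $\phi$ there), and extend to all of $D(\alpha^{c}_{-i\beta/2})$ via the core argument from Step~$1$ of the proof of Theorem~\ref{tNW} before invoking Lemma~\ref{lhj}. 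This is a sound chain of reasoning --- Lemma~\ref{lhj} and Proposition~\ref{prop31} apply verbatim to $(C^{*}_{r}(\mathcal{G}),\alpha^{c})$, and $C_{c}(\mathcal{G})$ is a core for $\alpha^{c}_{-i\beta/2}$ on the reduced algebra by the same appeal to \cite{K2}. What your version buys is self-containment: the reader need not consult \cite{T3}. What it costs is the extra routine verification that the paper outsources; both arguments are valid.
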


\begin{proof}
The bijection between \eqref{fjol1} and \eqref{fjol2} is a consequence of Remark \ref{rem} and Theorem \ref{tNW}. If $\pi: C^{*}(\G) \to C_{r}^{*}(\G)$ denotes the canonical surjective $*$-homomorphism, then the map $\psi\to \psi \circ\pi$ is injective from the set \eqref{fjol3} to \eqref{fjol2}. Since any $\beta$-KMS weight $\psi$ on $C^{*}(\G)$ is given by $\psi(a)=\int_{\G^{(0)}} P(a)\ \mathrm{d}\mu$ for $a\in C^{*}(\G)_{+}$ for a quasi-invariant measure $\mu$ with Radon-Nikodym cocycle $e^{-\beta c}$ by Corollary \ref{lcircP}, surjectivity of the map follows from Proposition 2.1 in \cite{T3}.
\end{proof}

In \cite{CT5} Neshveyev's Theorem was proved for  \'etale groupoids $\mathcal{G}$ with $\mathcal{G}^{(0)}$ totally disconnected as part of the proof of Theorem 2.1 in \cite{CT5}. The assumption that $\mathcal{G}^{(0)}$ is totally disconnected in Theorem 2.1 in \cite{CT5} was mainly needed to ensure that Neshveyev's Theorem could be used, and hence using Theorem \ref{tNW} it follows that Theorem 2.1 in \cite{CT5} with condition $4)$ removed is valid also for groupoids where $\mathcal{G}^{(0)}$ is not totally disconnected. In conclusion we get Theorem \ref{nysaet} below by combining the results of \cite{CT5} with Theorem \ref{tNW}. For the statement of this theorem, notice that a \emph{diagonal} KMS weight $\psi$ is a KMS weight satisfying $\psi=\psi \circ P$. 

\begin{thm} [Theorem 2.1 in \cite{CT5}]\label{nysaet}
Let $\mathcal{G}$ be an \'etale groupoid such that for at least one element $x\in \mathcal{G}^{(0)}$ the isotropy group $\mathcal{G}_{x}^{x}$ is trivial, i.e. $\mathcal{G}_{x}^{x}=\{x\}$, and that $\mathcal{G}$ is minimal in the sense that $s(r^{-1}(\{y\}))$ is dense in $\mathcal{G}^{(0)}$ for all $y\in \mathcal{G}^{(0)}$.

Let $\alpha= \{\alpha_{t}\}_{t\in \mathbb{R}}$ be a continuous $1$-parameter group of automorphisms on $C_{r}^{*}(\mathcal{G})$ and assume for some $\beta_{0}\neq 0$ there is a non-zero $\beta_{0}$-KMS weight for $\alpha$. TFAE
\begin{enumerate}
\item There is a $\beta_{1}\neq 0$ and a non-zero diagonal $\beta_{1}$-KMS weight for $\alpha$.
\item Whenever $\beta\neq 0$ and there is a non-zero $\beta$-KMS weight for $\alpha$, there is also a non-zero diagonal $\beta$-KMS weight for $\alpha$.
\item $\alpha_{t}(f)=f$ for all $t\in \mathbb{R}$ and all $f\in C_{0}(\mathcal{G}^{(0)})$.
\item $\alpha$ is diagonal.
\end{enumerate}
\end{thm}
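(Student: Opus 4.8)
The plan is to run the proof of Theorem~2.1 in \cite{CT5} essentially unchanged, replacing every appeal there to Neshveyev's Theorem---available in \cite{CT5} only when $\mathcal{G}^{(0)}$ is totally disconnected---by an appeal to Theorem~\ref{tNW}, which holds for an arbitrary \'etale groupoid. I would first clear away the formal links in the cycle of implications: $(2)\Rightarrow(1)$ is immediate, since by hypothesis a non-zero $\beta_{0}$-KMS weight for $\alpha$ exists; and $(4)\Rightarrow(3)$ follows because if $\alpha=\alpha^{c}$ then for a unit $x$ we have $c(x)=c(xx)=2c(x)$, so $c$ vanishes on $\mathcal{G}^{(0)}$ and $\alpha^{c}_{t}(f)(x)=e^{itc(x)}f(x)=f(x)$ for every $f\in C_{0}(\mathcal{G}^{(0)})$.

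The implication $(4)\Rightarrow(2)$ is precisely where total disconnectedness entered \cite{CT5}, and here Theorem~\ref{tNW} does the work. Assuming $\alpha=\alpha^{c}$ and $\beta\neq 0$, any non-zero $\beta$-KMS weight corresponds by Theorem~\ref{tNW} to a pair $(\mu,\{\varphi_{x}\})$ with $\mu\neq 0$ quasi-invariant with Radon--Nikodym cocycle $e^{-\beta c}$; feeding the same $\mu$ together with the canonical traces $\{\operatorname{Tr}_{x}\}$ back into Theorem~\ref{tNW} yields a non-zero $\beta$-KMS weight that is diagonal by Corollary~\ref{lcircP}, the compatibility condition \eqref{eb3} being automatic for $\beta\neq 0$ by Remark~\ref{rem}. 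Since the theorem concerns $C_{r}^{*}(\mathcal{G})$ while Theorem~\ref{tNW} is phrased for $C^{*}(\mathcal{G})$, I would pass between the two algebras exactly as in Corollary~\ref{cor66}.

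The structural core is $(3)\Leftrightarrow(4)$ together with $(1)\Rightarrow(3)$, and here the hypotheses on $\mathcal{G}$ are used but total disconnectedness is not. Minimality together with the single unit $x_{0}$ of trivial isotropy makes $\mathcal{G}$ topologically principal: every unit of the dense orbit $s(r^{-1}(\{x_{0}\}))$ has isotropy conjugate to $\mathcal{G}_{x_{0}}^{x_{0}}=\{x_{0}\}$, so the units of trivial isotropy are dense and $(C_{r}^{*}(\mathcal{G}),C_{0}(\mathcal{G}^{(0)}))$ is a Cartan pair. For $(3)\Rightarrow(4)$ I would note that an $\alpha$ fixing $C_{0}(\mathcal{G}^{(0)})$ pointwise carries normalisers of this Cartan subalgebra to normalisers, so as in \cite{CT5} each $\alpha_{t}$ multiplies a normaliser supported on a bisection by a phase, and assembling these phases over $t$ produces a continuous homomorphism $c:\mathcal{G}\to\mathbb{R}$ with $\alpha=\alpha^{c}$. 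For $(1)\Rightarrow(3)$ I would use Theorem~\ref{tgns}: a non-zero diagonal $\beta_{1}$-KMS weight $\psi=\psi\circ P$ has modular group $\{\tilde{\alpha}_{-\beta_{1}t}\}$, and the modular group of a weight factoring through $P$ onto the abelian algebra $C_{0}(\mathcal{G}^{(0)})$ fixes that algebra pointwise; its measure $\mu$ is quasi-invariant, hence of full support by minimality, so $\pi_{\psi}$ is faithful on $C_{r}^{*}(\mathcal{G})$ and $\alpha_{t}(f)=f$ for all $f\in C_{0}(\mathcal{G}^{(0)})$.

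The main obstacle is thus not a delicate new estimate but a careful audit of \cite{CT5}: I would check line by line that total disconnectedness is invoked there nowhere except in applying Neshveyev's Theorem, and that each such application is to a diagonal action $\alpha^{c}$ on one of the open subgroupoids $\mathcal{G}|_{V_{n}}$ exhausting $\mathcal{G}$, so that Theorem~\ref{tNW} substitutes cleanly. Once this is verified the cycle $(1)\Rightarrow(3)\Rightarrow(4)\Rightarrow(2)\Rightarrow(1)$ closes and all four conditions are equivalent with no hypothesis on the topology of $\mathcal{G}^{(0)}$.
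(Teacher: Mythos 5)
Your proposal follows exactly the route the paper takes: the paper's entire justification of Theorem \ref{nysaet} is the observation that total disconnectedness entered the proof of Theorem 2.1 in \cite{CT5} only through the application of Neshveyev's Theorem, so that substituting Theorem \ref{tNW} makes that proof work for arbitrary \'etale groupoids. Your sketch of the individual implications (including the passage between $C^{*}(\mathcal{G})$ and $C^{*}_{r}(\mathcal{G})$ via the argument of Corollary \ref{cor66}) is consistent with \cite{CT5} and merely fills in detail the paper leaves implicit.
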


\subsection{The set of KMS weights on \'etale groupoid $C^{*}$-algebras}
Using Theorem \ref{tNW} we can elaborate the analysis in Section \ref{section5} in the case where the $C^{*}$-dynamical system is on the form $(C^{*}(\G), \alpha^{c})$. We will bring together the ideas from Section \ref{section5} and Section \ref{section3} and investigate the structure of the set of quasi-invariant measures.

Fix a sequence $\{E_{n}\}_{n=1}^{\infty}\subseteq C_{c}(\mathcal{G}^{(0)})_{+}$ as in the beginning of Section \ref{section5}, and keep the notation from Section \ref{section5}, i.e.  $\mathcal{A}_{n}:=\overline{E_{n}C^{*}(\mathcal{G})E_{n}}$ and we identify $\mathcal{W}(\beta, \alpha^{c})$ with $\varprojlim_{n\in \mathbb{N}} \mathcal{W}^{\beta}_{n}$. The set $\Delta(e^{-\beta c})$ of quasi-invariant regular measures with Radon-Nikodym cocycle $e^{-\beta c}$ can naturally be embedded into $\mathcal{W}(\beta, \alpha^{c})$ by mapping a measure $\mu$ onto the weight
\begin{equation*}
\psi_{\mu}(a)=\int_{\Gu} P(a) \ d \mu \quad \text{ for } a\in C^{*}(\G)_{+},
\end{equation*}
see e.g. Corollary \ref{lcircP}. We then get

\begin{lemma}
Let $\mathcal{G}$ be an \'etale groupoid, let $c:\mathcal{G} \to \mathbb{R}$ be a continuous groupoid homomorphism and let $\beta \in \mathbb{R}$. Then $\Delta (e^{-\beta c})$ is a closed convex cone in $\varprojlim_{n\in \mathbb{N}} \mathcal{W}^{\beta}_{n}$ and $\Delta (e^{-\beta c})$ is a lattice in its natural order.
\end{lemma}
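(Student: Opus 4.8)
The plan is to identify $\Delta(e^{-\beta c})$ with the set of \emph{diagonal} $\beta$-KMS weights --- those $\psi\in\mathcal{W}(\beta,\alpha^{c})$ with $\psi=\psi\circ P$. Indeed, via the embedding $\mu\mapsto\psi_{\mu}$ together with Theorem~\ref{tNW} and Corollary~\ref{lcircP}, the image of $\Delta(e^{-\beta c})$ inside $\varprojlim_{n}\mathcal{W}^{\beta}_{n}$ is exactly $\{\psi : \psi=\psi\circ P\}$. I would therefore run the whole argument in terms of diagonal weights and read off the conclusion for measures. For the convex cone assertion I would note that $\varprojlim_{n}\mathcal{W}^{\beta}_{n}$ is a convex cone by Theorem~\ref{thm46}\eqref{it1}, and that the property $\psi=\psi\circ P$ is visibly preserved under addition and multiplication by scalars $\lambda\geq 0$ (using positivity of $P$); hence the diagonal weights form a convex sub-cone.

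For closedness, the key observation is that $\psi$ is diagonal if and only if $\psi_{n}=\psi_{n}\circ P$ on $\mathcal{A}_{n}$ for every $n$. The forward direction is immediate, and the converse follows from Corollary~\ref{corend} by writing $\psi(x)=\lim_{n}\psi(E_{n}xE_{n})$ and using $P(E_{n}xE_{n})=E_{n}P(x)E_{n}$. Since $P$ maps $\mathcal{A}_{n}$ into $\mathcal{A}_{n}$, each evaluation $\psi\mapsto\psi_{n}(a-P(a))$ with $a\in\mathcal{A}_{n}$ is weak$^{*}$-continuous on $\varprojlim_{n}\mathcal{W}^{\beta}_{n}$, and $\Delta(e^{-\beta c})$ is the intersection of the zero sets of these functionals, hence closed.

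For the lattice structure I would first check that the natural order coincides with the pointwise order of measures: if $\nu-\mu\in\Delta(e^{-\beta c})$ then $\nu-\mu\geq 0$, while conversely if $\mu\leq\nu$ as measures then $\nu-\mu$ is a positive regular measure satisfying the defining identity \eqref{e3p33}, which is linear in the measure, so $\nu-\mu\in\Delta(e^{-\beta c})$. To produce meets and joins, set $\lambda=\mu+\nu\in\Delta(e^{-\beta c})$ and write $\mu=f\lambda$, $\nu=(1-f)\lambda$ by Radon--Nikodym. The heart of the argument is to show that $f$ is invariant along the groupoid, i.e.\ $f\circ\sigma_{W}=f$ $\lambda$-a.e.\ for every small bisection $W$, where $\sigma_{W}:=s\circ r_{W}^{-1}:r(W)\to s(W)$; this comes from a change-of-variables computation in Proposition~\ref{p33} comparing the quasi-invariance identities for $\mu$ and for $\lambda$. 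Then $\min(f,1-f)$ and $\max(f,1-f)$ are again invariant, and the same change of variables shows that an invariant bounded density times $\lambda$ lies in $\Delta(e^{-\beta c})$; this gives $\mu\wedge\nu:=\min(f,1-f)\lambda$ and $\mu\vee\nu:=\max(f,1-f)\lambda$ in $\Delta(e^{-\beta c})$. Since these are already the meet and join in the ambient lattice of regular Borel measures, and the natural order is the measure order, they are the meet and join inside $\Delta(e^{-\beta c})$ as well.

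The main obstacle I anticipate is precisely the invariance of the Radon--Nikodym density $f$ and the verification that invariant-density measures remain quasi-invariant; both rest on the transformation rule along bisections in Proposition~\ref{p33}, and one must carefully handle the bookkeeping of restricting to a small bisection $W$ and pushing measures forward along $\sigma_{W}$, then patch the local statements over a countable basis of small bisections.
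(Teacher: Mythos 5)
Your argument is correct, and for the lattice assertion it takes a genuinely different route from the paper. The closedness and convex-cone parts coincide with the paper's proof: both reduce to the observation that $P$ maps $\mathcal{A}_{n}$ into $\mathcal{A}_{n}$ and that $\psi=\psi\circ P$ can be tested level by level, so that $\Delta(e^{-\beta c})$ is an intersection of weak$^{*}$-closed conditions. For the lattice structure, however, the paper stays entirely at the level of weights: it takes the least upper bound $\omega$ of $\psi,\phi\in\Delta(e^{-\beta c})$ inside the ambient lattice $\varprojlim_{n}\mathcal{W}^{\beta}_{n}$ (supplied by Theorem~\ref{thm46}), uses Theorem~\ref{tNW} to see that $\omega\circ P$ and $\rho\circ P$ are again KMS weights, deduces $\omega\leq\omega\circ P$ from minimality, and forces equality by evaluating on the $E_{n}$; this shows $\Delta(e^{-\beta c})$ is in fact a \emph{sublattice} of $\varprojlim_{n}\mathcal{W}^{\beta}_{n}$. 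You instead construct the meet and join directly at the level of measures, via Radon--Nikodym densities with respect to $\lambda=\mu+\nu$ and the invariance $f\circ\sigma_{W}=f$ along small bisections extracted from Proposition~\ref{p33} --- a technique close in spirit to the proof of Theorem~\ref{t35}. Your route is more self-contained (it does not need Theorem~\ref{thm46}\eqref{it3} or the uniqueness statement of Theorem~\ref{tNW} for the lattice part) and produces explicit formulas $\min(f,1-f)\lambda$ and $\max(f,1-f)\lambda$; its price is the measure-theoretic bookkeeping you identify (patching the a.e.\ invariance over a countable basis of small bisections, checking that the natural cone order agrees with the setwise order of measures via linearity of \eqref{e3p33}, and a Lebesgue decomposition argument to compare an arbitrary upper bound in $\Delta(e^{-\beta c})$ with $\max(f,1-f)\lambda$). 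Note also that your argument establishes that $\Delta(e^{-\beta c})$ is a lattice in its own order but not, as stated, that its meets and joins agree with those of the ambient space; the lemma does not require this, but the paper's proof gives it for free.
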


\begin{proof}
Since $P(E_{n}xE_{n})=E_{n}P(x)E_{n}$ for all $n\in \mathbb{N}$ and $x\in C^{*}(\G)$ then $P:\mathcal{A}_{n} \to \mathcal{A}_{n}$, and if $\psi\in \varprojlim_{n\in \mathbb{N}} \mathcal{W}^{\beta}_{n}$ we get that $\psi=\psi\circ P$ if and only if $\psi\circ \iota_{n, \infty} \circ P = \psi\circ \iota_{n, \infty}$ for all $n\in \mathbb{N}$ as in Corollary \ref{lcircP}. Since the $P$-invariant elements of $\mathcal{W}_{n}^{\beta}$ are closed, this proves that $\Delta (e^{-\beta c})$ is closed. Since $\Delta (e^{-\beta c})$ is clearly a convex cone this proves the first assertion. Assume now that $\psi, \phi \in \Delta (e^{-\beta c})$ and let $\omega$ be their least upper bound in $\mathcal{W}(\beta, \alpha^{c}) $. There exists a $\rho\in \mathcal{W}(\beta, \alpha^{c})$ such that $\phi+\rho=\omega$, and hence by Theorem \ref{tNW} $\rho \circ P$ and $\omega \circ P$ are also $\beta$-KMS weights. For any $n\in \mathbb{N}$ and $x\in \mathcal{A}_{n}$ then
\begin{equation*}
\phi(x )+\rho \circ P( x )= (\phi+\rho)(P( x ))=\omega \circ P(x ) \ ,
\end{equation*}
so $\phi \leq \omega \circ P $ by the observation in \eqref{eobsny}, and likewise $\psi \leq \omega \circ P$. Since $\omega$ is the least upper bound this implies that $\omega \leq \omega \circ P$, but $\omega \circ P(E_{n})-\omega(E_{n})=0$ for all $n$, implying that $\omega \circ P = \omega$. Hence $\omega \in \Delta(e^{-\beta c})$. A similar argument gives that the greatest lower bound lies in $\Delta(e^{-\beta c})$, proving the Lemma. 
\end{proof}

The set of KMS states on a unital $C^{*}$-algebra is a simplex, and the fact that this allows for unique maximal barycentric decompositions is often an essential tool when working with KMS states. As a last remark, we will therefore combine our results so far to obtain a similar description of KMS weights for a large class of groupoids containing in particular the minimal \'etale groupoids.

\begin{prop} \label{pdeltasimplex}
Let $\mathcal{G}$ be an \'etale groupoid, let $c : \mathcal{G} \to \mathbb{R}$ be a continuous groupoid homomorphism and let $\beta \in \mathbb{R}$. If $f\in C_{c}(\mathcal{G}^{(0)})_{+}$ satisfies 
\begin{equation*}
s(r^{-1}(f^{-1}(]1, \infty[)))=\mathcal{G}^{(0)}
\end{equation*}
then $\psi(f)\in ]0, \infty[$ for each $\psi \in \mathcal{W}(\beta, \alpha^{c})\setminus \{0 \}$ and the set
\begin{equation*}
\mathcal{W}_{f}(\beta, \alpha^{c}):=\{ \psi\in \mathcal{W}(\beta, \alpha^{c}) |\ \psi(f)=1\} 
\end{equation*}
is a simplex in $\varprojlim_{n\in \mathbb{N}} \mathcal{W}^{\beta}_{n}$.
\end{prop}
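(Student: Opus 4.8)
The plan is to deduce both assertions from results already in hand, handling the numerical claim first and then feeding it into the lattice structure of Theorem~\ref{thm46}.

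For the first claim, fix $\psi \in \mathcal{W}(\beta, \alpha^{c}) \setminus \{0\}$ and let $(\mu, \{\varphi_{x}\}_{x \in \mathcal{G}^{(0)}})$ be the pair associated to it by Theorem~\ref{tNW}. Since $f \in C_{c}(\mathcal{G}^{(0)})$ is supported on the unit space, the only $g \in \mathcal{G}_{x}^{x}$ contributing to \eqref{eCcG} is the unit, and $\varphi_{x}(u_{x}) = 1$, so $\psi(f) = \int_{\mathcal{G}^{(0)}} f \, d\mu$; this is exactly the measure characterising $\psi$ as in Step~$3$ of the proof of Theorem~\ref{tNW}. The value is finite because $f$ is bounded with compact support and $\mu$ is regular, equivalently because $C_{c}(\mathcal{G}) \subseteq \mathcal{M}_{\psi}$ by Proposition~\ref{p41}. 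As $\psi \neq 0$ we have $\mu \neq 0$. Writing $U := f^{-1}(]1, \infty[)$, the hypothesis reads $s(r^{-1}(U)) = \mathcal{G}^{(0)}$; were $\mu(U) = 0$, then Lemma~\ref{l34} would give $\mu(\mathcal{G}^{(0)}) = \mu(s(r^{-1}(U))) = 0$, contradicting $\mu \neq 0$. Hence $\mu(U) > 0$, and since $f > 1$ on $U$ we obtain $\psi(f) = \int f \, d\mu \geq \mu(U) > 0$, proving $\psi(f) \in ]0,\infty[$.

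For the simplex claim I would work with the cone $\mathcal{W}(\beta, \alpha^{c}) = \varprojlim_{n} \mathcal{W}_{n}^{\beta}$ and the evaluation functional $\ell(\psi) := \psi(f)$. For $n$ large we have $f \in \mathcal{A}_{n}$, so $\ell$ is the composition of the $n$-th projection with evaluation at $f$ and is therefore a continuous linear functional on $\varprojlim_{n} \mathcal{A}_{n}^{*}$; by the previous paragraph it is strictly positive on $\mathcal{W}(\beta, \alpha^{c}) \setminus \{0\}$. Consequently the cone is salient, and $\mathcal{W}_{f}(\beta, \alpha^{c}) = \{\psi \in \mathcal{W}(\beta, \alpha^{c}) : \ell(\psi) = 1\}$ is a base of it: every nonzero $\psi$ has the unique expression $\psi = \ell(\psi) \cdot (\psi / \ell(\psi))$ with $\psi / \ell(\psi) \in \mathcal{W}_{f}(\beta, \alpha^{c})$, so the cone generated by $\mathcal{W}_{f}(\beta, \alpha^{c})$ is all of $\mathcal{W}(\beta, \alpha^{c})$. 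By Theorem~\ref{thm46}\eqref{it3} this cone is a lattice in the induced order, hence a lattice cone, and a convex set arising as the base of a lattice cone is, by the standard cone-theoretic characterisation, a simplex.

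The substantive input is entirely borrowed: Theorem~\ref{tNW} identifies $\psi(f)$ with $\int f \, d\mu$, Lemma~\ref{l34} propagates the null set along the groupoid to contradict $s(r^{-1}(U)) = \mathcal{G}^{(0)}$, and Theorem~\ref{thm46}\eqref{it3} supplies the lattice property. The only real obstacle is the conceptual bookkeeping around the notion of simplex in this non-compact, projective-limit setting: one must verify that via $\ell$ the lattice cone $\mathcal{W}(\beta, \alpha^{c})$ is precisely (affinely isomorphic to) the cone over its base $\mathcal{W}_{f}(\beta, \alpha^{c})$, so that the lattice property of the cone is exactly the simplex property of the base. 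This is what the strict positivity and finiteness of $\ell$ from the first paragraph guarantee: finiteness makes $\ell$ everywhere defined, while strict positivity makes the level set $\{\ell = 1\}$ a genuine base rather than an unbounded slice.
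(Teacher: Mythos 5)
Your first paragraph is correct and is essentially the paper's argument for the claim $\psi(f)\in ]0,\infty[$: finiteness from Proposition~\ref{p41}, and strict positivity by pushing the null set $f^{-1}(]1,\infty[)$ around with Lemma~\ref{l34}.

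The second half has a genuine gap. The notion of simplex the paper invokes (page 334 in \cite{BR}, and the one needed for the barycentric decompositions motivating the proposition) is that of a \emph{compact} convex set whose generated cone is a lattice; the paper states explicitly that $\mathcal{W}_{f}(\beta,\alpha^{c})$ is a simplex \emph{if it is compact}, and then spends the entire second half of its proof establishing compactness. Your argument supplies the lattice property (Theorem~\ref{thm46}\eqref{it3}), closedness, and the fact that $\{\ell=1\}$ is a base of a salient cone, but none of that yields compactness: a closed base of a lattice cone cut out by a continuous strictly positive functional need not be compact (think of the cone of positive regular Borel measures on a non-compact space with $\ell=$ integration against a fixed compactly supported function). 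The missing idea is the second use of the hypothesis $s(r^{-1}(f^{-1}(]1,\infty[)))=\mathcal{G}^{(0)}$: for each $n$ one covers the compact set $\supp(E_{n})$ by $s(W_{1}),\dots,s(W_{k})$ for small bisections $W_{l}$ with $r(W_{l})\subseteq f^{-1}(]1,\infty[)$, and quasi-invariance gives
\begin{equation*}
\mu(s(W_{l}))=\int_{r(W_{l})} e^{\beta c(r_{W_{l}}^{-1}(x))}\, d\mu(x) \leq \sup_{g\in \overline{W_{l}}} e^{\beta c(g)}\,\mu(r(W_{l})) \leq \sup_{g\in \overline{W_{l}}} e^{\beta c(g)},
\end{equation*}
since $\mu(r(W_{l}))\leq \psi(f)=1$. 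This bounds $\psi(E_{n}^{2})$ uniformly over $\mathcal{W}_{f}(\beta,\alpha^{c})$, so each coordinate $\psi_{n}$ ranges in a norm-bounded, hence weak$^{*}$-compact, subset of $\mathcal{A}_{n}^{*}$, and Tychonoff gives compactness of the closed set $\mathcal{W}_{f}(\beta,\alpha^{c})$. Without this step your proof only shows that $\mathcal{W}_{f}(\beta,\alpha^{c})$ is a closed base of a lattice cone, which is weaker than the statement being proved.
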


\begin{proof}
Let $\psi \in \mathcal{W}(\beta, \alpha^{c})$. If $ \psi(f)=0$ and $\psi$ on $\Gu$ is given by the measure $\mu \in \Delta(e^{-\beta c})$ then $\mu (f^{-1}(]1, \infty[))=0$, and hence $\mu(\mathcal{G}^{(0)})=0$ by Lemma \ref{l34}. Hence any $\psi \in \mathcal{W}(\beta, \alpha^{c})\setminus \{0\}$ can be written $\psi =\lambda \psi'$ for a unique $\lambda>0$ and $\psi' \in \mathcal{W}_{f}(\beta, \alpha^{c})$. It follows from this that $\mathcal{W}_{f}(\beta, \alpha^{c})$ is a simplex if it is a compact subset of $\varprojlim_{n\in \mathbb{N}} \mathcal{W}^{\beta}_{n}$, c.f. page 334 in \cite{BR}. Since $\mathcal{W}_{f}(\beta, \alpha^{c})$ is clearly closed in $\varprojlim_{n\in \mathbb{N}} \mathcal{W}^{\beta}_{n}$ it suffices to prove that it is a subset of a compact set.

For fixed $n\in \mathbb{N}$ there exist small bisections $W_{1}, \dots , W_{k}$ in $\mathcal{G}$ with $\text{supp}(E_{n}) \subseteq \bigcup_{l=1}^{k} s(W_{l})$ and $r(W_{l})\subseteq f^{-1}(]1,\infty[)$ for all $l$. Let $\psi \in \mathcal{W}_{f}(\beta, \alpha^{c})$ and let $\mu \in \Delta(e^{-\beta c})$ be the measure associated to it by Theorem \ref{tNW}, then
\begin{equation*}
\mu(s(W_{l})) = \int_{r(W_{l})} e^{\beta c (r_{W_{l}}^{-1}(x))} \ d \mu (x) \leq \sup_{g \in \overline{W_{l}}} e^{\beta c (g)} \mu(r(W_{l})) \ .
\end{equation*}
Since $\psi(f)=1$ and $r(W_{l})\subseteq f^{-1}(]1, \infty[)$ then $\mu(r(W_{l})) \leq 1$. It follows that for any $\psi \in \mathcal{W}_{f}(\beta, \alpha^{c})$ then
\begin{equation*}
\psi(E_{n}^{2})\leq \sum_{l=1}^{k}\sup_{g \in \overline{W_{l}}} e^{\beta c (g)} \ ,
\end{equation*}
and hence $\mathcal{W}_{f}(\beta, \alpha^{c})$ is contained in a compact set by Tychonoffs Theorem.
\end{proof}

\section{A refinement of Neshveyev's Theorem} \label{section6}
The $\mu$-measurable fields of states occuring in Neshveyev's Theorem are in general difficult to describe, but in Theorem 5.2 in \cite{C2} we gave a description of these $\mu$-measurable fields of states for a large class of groupoids. This description essentially boils the analysis of the KMS states down to the analysis of the quasi-invariant measures. The purpose of this section is to do the same for KMS weights, c.f. Corollary \ref{cortw} and Theorem \ref{gns} below.

In this section we will generalise Theorem 5.2 in \cite{C2} to KMS weights. The proof of Theorem 5.2 in \cite{C2} is quite technical and long, so instead of trying to adapt it to the setting of KMS weights, we present a new proof which is much simpler and which also works elegantly for KMS weights. The main idea in our new proof is to use ideas from ergodic theory to control the behaviour of the $\mu$-measurable fields of states.

We will throughout this section restrict attention to the following groupoids.

\begin{defn}\label{dabel}
Let $A$ be a discrete countable abelian group and let $\mathcal{G}$ be an \'etale groupoid. We say that $\mathcal{G}$ is \emph{injectively graded by $A$} if there is a continuous groupoid homomorphism $\Phi: \mathcal{G} \to A $ satisfying
\begin{equation} \label{einjectiv}
\ker(\Phi) \cap \mathcal{G}_{x}^{x} =\{x\} \qquad \text{for all } x\in \mathcal{G}^{(0)} \ .
\end{equation}
\end{defn}

\begin{remark}
The criterion in \eqref{einjectiv} is equivalent with $\Phi$ being injective on all isotropy groups. There are several important classes of $C^{*}$-algebras that can be realized as groupoid $C^{*}$-algebras for groupoids satisfying Definition \ref{dabel}. As an example, the groupoids arising from compactly aligned topological $k$-graphs as described in \cite{Yeend} satisfies Definition \ref{dabel}, see e.g. Example 2.3 in \cite{C2} for an explanation of this. This implies in particular that the groupoids that give rise to directed graph $C^{*}$-algebras, higher rank graph $C^{*}$-algebras and crossed products by $\mathbb{Z}^{k}$ satisfy Definition \ref{dabel}, see Example 7.1 in \cite{Yeend} for the details.

In Definition 2.1 in \cite{C2} the groupoids in Definition \ref{dabel} with compact unit space were introduced under a different name, but in the meantime the author has become acquainted with the better suited notion of graded groupoids.
 \end{remark}

\begin{thm} \label{tmain}
Let $\mathcal{G}$ be an \'etale groupoid injectively graded by a discrete countable abelian group $A$ via a map $\Phi: \mathcal{G} \to A$, let $c:\mathcal{G}\to \mathbb{R}$ be a continuous groupoid homomorphism and let $\beta \in \mathbb{R}$. If $\mu \in \Delta(e^{-\beta c})\setminus\{0\}$ is ergodic then:
\begin{enumerate}
\item \label{a1}The subset
\begin{equation*}
X(C):=\{x\in \mathcal{G}^{(0)} \ : \ \Phi(\mathcal{G}_{x}^{x}) = C \}
\end{equation*}
is Borel and invariant for each subgroup $C\subseteq A$.
\item \label{a2} There exists a unique subgroup $B$ of $A$ with $\mu(X(B)^{C})=0$.
\item\label{a3} For $x\in X(B)$ let $\Phi_{x}: C^{*}(\mathcal{G}_{x}^{x}) \to C^{*}(B)$ be the isomorphism induced by the restriction of $\Phi$. If $\{\varphi_{x}\}_{x \in \mathcal{G}^{(0)}}$ is a $\mu$-measurable field of states with
\begin{equation} \label{esikkert}
\varphi_{x}(u_{g}) =\varphi_{r(h)}(u_{hgh^{-1}}) \text{ for }\mu \text{-a.e. } x\in \mathcal{G}^{(0)} \text{ and all } g\in \mathcal{G}_{x}^{x}\text{ and } h\in \mathcal{G}_{x} 
\end{equation}
then there exists a state $\varphi$ on $C^{*}(B)$ such that $\varphi \circ \Phi_{x} = \varphi_{x}$ for $\mu$-a.e. $x\in X(B)$.
\end{enumerate}
\end{thm}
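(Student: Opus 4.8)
The plan is to handle the three parts in turn, drawing on ergodicity only through Remark~\ref{lergokonstant}. For part~\eqref{a1} I would first record invariance: if $x\in X(C)$ and $h\in\mathcal{G}_{x}$ with $r(h)=y$, then $\mathcal{G}_{y}^{y}=h\mathcal{G}_{x}^{x}h^{-1}$, and since $A$ is abelian and $\Phi$ sends units to the identity we have $\Phi(hgh^{-1})=\Phi(h)\Phi(g)\Phi(h)^{-1}=\Phi(g)$ for all $g\in\mathcal{G}_{x}^{x}$, so $\Phi(\mathcal{G}_{y}^{y})=\Phi(\mathcal{G}_{x}^{x})=C$ and $y\in X(C)$. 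For the Borel property I would use that the isotropy bundle $\{g:r(g)=s(g)\}$ is closed (as $\mathcal{G}^{(0)}$ is Hausdorff) while $\Phi^{-1}(a)$ is clopen (as $A$ is discrete), so that $M_{a}:=\{g:r(g)=s(g),\ \Phi(g)=a\}$ is Borel; by Lemma~\ref{l31} the set $s(M_{a})=\{x:a\in\Phi(\mathcal{G}_{x}^{x})\}$ is Borel, and since $A$ is countable,
\[
X(C)=\Big(\bigcap_{a\in C}s(M_{a})\Big)\cap\Big(\bigcap_{a\in A\setminus C}s(M_{a})^{C}\Big)
\]
is Borel.

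Part~\eqref{a2} is then immediate: the sets $X(C)$, with $C$ ranging over the countably many subgroups of $A$, form a Borel partition of $\mathcal{G}^{(0)}$ into invariant sets, so by countable additivity and $\mu\neq0$ some $X(B)$ has positive measure, and ergodicity forces $\mu(X(B)^{C})=0$. Uniqueness holds because two such subgroups would produce disjoint conull sets.

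The substance of the theorem is part~\eqref{a3}, which I would prove by transporting the field to $C^{*}(B)$ and applying Remark~\ref{lergokonstant}. For $x\in X(B)$ and $b\in B$ let $g_{x}(b)$ be the unique element of $\mathcal{G}_{x}^{x}$ with $\Phi(g_{x}(b))=b$, unique by \eqref{einjectiv}, so that $\Phi_{x}^{-1}(u_{b})=u_{g_{x}(b)}$, and set $\theta_{x}:=\varphi_{x}\circ\Phi_{x}^{-1}$, a state on $C^{*}(B)$. I would then define $\Theta\colon\mathcal{G}^{(0)}\to\mathcal{S}$, where $\mathcal{S}$ is the state space of $C^{*}(B)$, by $\Theta(x)=\theta_{x}$ on $X(B)$ and an arbitrary fixed value elsewhere; since $C^{*}(B)$ is separable, $\mathcal{S}$ is a compact metrizable, hence second countable metric, space. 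The decisive observation is that $\Theta$ is orbit-constant almost everywhere: whenever $x\in X(B)$ is a point at which \eqref{esikkert} holds and $h\in\mathcal{G}_{x}$ with $r(h)=y$ (so $y\in X(B)$ by invariance), then $h\,g_{x}(b)\,h^{-1}\in\mathcal{G}_{y}^{y}$ has $\Phi$-value $b$ and hence equals $g_{y}(b)$, so that
\[
\theta_{x}(u_{b})=\varphi_{x}(u_{g_{x}(b)})=\varphi_{y}(u_{h g_{x}(b)h^{-1}})=\varphi_{y}(u_{g_{y}(b)})=\theta_{y}(u_{b})
\]
for every $b\in B$, giving $\Theta(x)=\Theta(y)$. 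As \eqref{esikkert} holds off a $\mu$-null set and $X(B)$ is conull, $\Theta$ is constant on $s(r^{-1}(\{x\}))$ for $\mu$-a.e.\ $x$, so Remark~\ref{lergokonstant} (applied to a Borel representative of $\Theta$) produces a single state $\varphi\in\mathcal{S}$ with $\Theta=\varphi$ $\mu$-a.e.; restricting to $X(B)$ gives $\varphi\circ\Phi_{x}=\varphi_{x}$ for $\mu$-a.e.\ $x\in X(B)$, as desired.

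The one genuinely technical point, and the step I expect to be the main obstacle, is verifying that $\Theta$ is $\mu$-measurable, which Remark~\ref{lergokonstant} requires. Since $C^{*}(B)$ is separable it is enough to check that $x\mapsto\theta_{x}(u_{b})=\varphi_{x}(u_{g_{x}(b)})$ is $\mu$-measurable for each $b\in B$. Here I would use injectivity of $\Phi$ on isotropy to see that $s|_{M_{b}}$ is injective; covering $M_{b}$ by a countable family of small bisections from Lemma~\ref{l31} decomposes $s(M_{b})$ into disjoint Borel pieces on each of which $g_{x}(b)$ is the unique element of $\mathcal{G}_{x}$ lying in a fixed bisection. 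Choosing $f\in C_{c}(\mathcal{G})$ supported in the enclosing bisection and equal to $1$ on the corresponding small bisection, the defining sum $\sum_{g\in\mathcal{G}_{x}^{x}}f(g)\varphi_{x}(u_{g})$ of \eqref{emumeasure} collapses to exactly $\varphi_{x}(u_{g_{x}(b)})$ on that piece, so the measurability built into the notion of a $\mu$-measurable field transfers to $x\mapsto\theta_{x}(u_{b})$, completing the argument.
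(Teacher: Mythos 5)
Your treatment of part~\eqref{a3} --- transporting the field to the state space of $C^{*}(B)$ via $\Phi_{x}^{-1}$, checking that the resulting map into the compact metrizable state space is measurable and constant on orbits, and invoking Remark~\ref{lergokonstant} --- is exactly the strategy of the paper's proof. Your measurability verification via a bisection decomposition of $M_{b}$ is a valid alternative to the paper's device of writing $1_{\Phi^{-1}(\{a\})}$ as a pointwise limit of functions in $C_{c}(\mathcal{G})$; both ultimately rest on the definition \eqref{emumeasure}. One small point of care: when you pass to a Borel representative of $\Theta$ you should saturate the exceptional null set using Lemma~\ref{l34} (as the paper does, by replacing $\varphi_{x}$ with $\mathrm{Tr}_{x}$ on an \emph{invariant} null set), so that orbit-constancy for $\mu$-a.e.\ $x$ survives the modification; this is routine but should be said.

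There is, however, one step that fails as written, in part~\eqref{a2}: a countable abelian group can have uncountably many subgroups (for instance $A=\bigoplus_{n\in\mathbb{N}}\mathbb{Z}/2\mathbb{Z}$), so ``the countably many subgroups of $A$'' is a false premise, and with an uncountable partition $\{X(C)\}_{C}$ countable additivity no longer forces some piece to have positive measure. The conclusion is still true and the repair is short: for each of the countably many $a\in A$ the set $s(M_{a})=\{x\ :\ a\in\Phi(\mathcal{G}_{x}^{x})\}$ is Borel and invariant, so ergodicity makes it either null or conull; setting $B:=\{a\in A\ :\ \mu(s(M_{a})^{C})=0\}$, a countable intersection of conull sets shows that $\Phi(\mathcal{G}_{x}^{x})=B$ for $\mu$-a.e.\ $x$, whence $B$ is a subgroup and $\mu(X(B)^{C})=0$; uniqueness then follows from disjointness as you say. (The paper defers parts~\eqref{a1} and~\eqref{a2} to the proof of Theorem 5.2 in \cite{C2}, so there is nothing in this paper to compare your argument against; your invariance and Borelness argument for~\eqref{a1} is correct.)
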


\begin{proof}
The proof of \eqref{a1} and \eqref{a2} follows as in the proof of $1)$ in Theorem 5.2 in \cite{C2} and we therefore leave the verification to the reader. To prove \eqref{a3} notice that since we can realise the Borel function $1_{\Phi^{-1}(\{a\})}$ as the
point wise limit of functions in $C_{c}(\mathcal{G})$ we can use Lemma \ref{l34} to find an invariant Borel $\mu$-null set $N\subseteq \mathcal{G}^{(0)}$ such that 
\begin{equation*}
\mathcal{G}^{(0)}\setminus N \ni x \to \sum_{g\in \mathcal{G}_{x}^{x}} 1_{\Phi^{-1}(\{a\})} (g) \varphi_{x}(u_{g})
\end{equation*}
is Borel for each $a\in A$ and such that $\varphi_{x}(u_{g}) =\varphi_{r(h)}(u_{hgh^{-1}})$ for all
$x\in \mathcal{G}^{(0)}\setminus N$ and all
$g\in \mathcal{G}_{x}^{x}$ and $ h\in \mathcal{G}_{x}$. Set
$\varphi'_{x}=\varphi_{x}$ for $x\notin N$ and
$\varphi'_{x}=\text{Tr}_{x}$ for $x\in N$,
where $\text{Tr}_{x}$ denotes the canonical
trace on $C^{*}(\mathcal{G}_{x}^{x})$. The map 
\begin{equation}\label{eborel}
\mathcal{G}^{(0)} \ni x \to \sum_{g\in \mathcal{G}^{x}_{x}} 1_{\Phi^{-1}(\{a\})}(g) \varphi'_{x}(u_{g})
\end{equation}
is Borel for each $a\in A$ and the equality in \eqref{esikkert} is true for $\{\varphi_{x}'\}_{x\in \mathcal{G}^{(0)}}$ for all $x\in \mathcal{G}^{(0)}$. Let $E$ denote the weak$^{*}$ compact set of states on $C^{*}(B)$, and notice that the sets
\begin{equation*}
\{\varphi \in E \ : \ \lvert \varphi(u_{b}) -\omega(u_{b}) \rvert <  \varepsilon \} \quad \text{for } \omega \in E, \ \varepsilon >0 \text{ and } b\in B
\end{equation*}
are a subbasis for the weak$^{*}$ topology on $E$. For each $b\in B$ then
\begin{equation*}
X(B) \ni x \to \sum_{g\in \mathcal{G}_{x}^{x}} 1_{\Phi^{-1}(\{b\})} (g) \varphi_{x}'(u_{g})
=\varphi_{x}'(\Phi_{x}^{-1}(u_{b})) \ ,
\end{equation*}
and hence it follows from \eqref{eborel} and the definition of our subbasis that the map $X(B)\ni x \to \varphi_{x}' \circ \Phi_{x}^{-1}\in E$ is Borel. Since $B$ is abelian the equality $\varphi_{x}'(u_{g}) =\varphi_{r(h)}'(u_{hgh^{-1}})$ for $g\in \mathcal{G}_{x}^{x}$ and $h\in \mathcal{G}_{x}$ implies that the map is constant on $s(r^{-1}(\{x\}))$ for each $x\in X(B)$, and hence it is constant $\mu$-almost everywhere by Remark \ref{lergokonstant}. This implies that there is a state $\phi$ on $C^{*}(B)$ with $\phi\circ \Phi_{x}=\varphi_{x}'$ for almost all $x$, and hence the same is true for $\{\varphi_{x}\}_{x\in \mathcal{G}^{(0)}}$.
\end{proof}

With Theorem~\ref{tmain} we can describe the KMS weights on the
groupoid $C^{*}$-algebra $C^{*}(\mathcal{G})$ of a groupoid $\mathcal{G}$ injectively graded by a
discrete countable abelian group.

\begin{thm} \label{tweights} Let $\mathcal{G}$ be an \'etale groupoid injectively graded by a discrete countable abelian group $A$ via a map $\Phi: \mathcal{G} \to A$. Let $\beta \in \mathbb{R}\setminus \{0 \}$, let  $c: \mathcal{G} \to \mathbb{R}$ be a continuous groupoid homomorphism and let
  $\mu\in \Delta(e^{-\beta c}) \setminus \{0\}$ be ergodic. Denote by $B$
  the subgroup of $A$ associated to $\mu$ given by \eqref{a2} in
  Theorem~\ref{tmain}. There exists an affine bijection from the
  state-space on $C^{*}(B)$ to the $\beta$-KMS weights on $C^{*}(\mathcal{G})$ that restricts to $\mu$ on $C_{c}(\mathcal{G}^{(0)})$. A state $\varphi$ maps to the $\beta$-KMS weight $\omega_{\varphi}$ given by
\begin{equation} \label{eweights}
\omega_{\varphi}(f)=\int_{X(B)} \sum_{g\in \mathcal{G}_{x}^{x}} f(g) \varphi(u_{\Phi(g)}) \ d\mu (x) \qquad \text{ for all } f\in C_{c}(\mathcal{G}) \ .
\end{equation}
\end{thm}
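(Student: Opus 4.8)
The plan is to realise the asserted bijection as the composite of Neshveyev's correspondence from Theorem~\ref{tNW} with the collapse of invariant fields to a single state provided by Theorem~\ref{tmain}. Since $\mu \in \Delta(e^{-\beta c})$, Theorem~\ref{tNW} identifies the $\beta$-KMS weights restricting to $\mu$ on $C_{c}(\mathcal{G}^{(0)})$ with the pairs $(\mu, \{\varphi_{x}\})$ whose field is $\mu$-measurable and satisfies \eqref{eb2}; condition \eqref{eb3} imposes nothing because $\beta \neq 0$ forces it to hold $\mu$-almost everywhere by Remark~\ref{rem}. Thus the task reduces to an affine bijection between states on $C^{*}(B)$ and such fields, taken up to $\mu$-a.e.\ equality.

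First I would build the forward map. Given a state $\varphi$ on $C^{*}(B)$, set $\varphi_{x}:=\varphi\circ\Phi_{x}$ for $x\in X(B)$ and $\varphi_{x}:=\operatorname{Tr}_{x}$ for $x\notin X(B)$; since $\mu(X(B)^{C})=0$ by \eqref{a2} this choice off $X(B)$ is irrelevant for $\mu$-measurable fields. For \eqref{eb2}: invariance of $X(B)$ from \eqref{a1} gives $r(h)\in X(B)$ whenever $x\in X(B)$ and $h\in\mathcal{G}_{x}$, and then $\Phi(hgh^{-1})=\Phi(g)$ because $A$ is abelian, so $\varphi_{r(h)}(u_{hgh^{-1}})=\varphi(u_{\Phi(g)})=\varphi_{x}(u_{g})$ for every such $x$. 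For $\mu$-measurability: for $f\in C_{c}(\mathcal{G})$ the set $F:=\Phi(\supp f)$ is finite since $A$ is discrete, each $\Phi^{-1}(\{b\})$ is clopen so $1_{\Phi^{-1}(\{b\})}f\in C_{c}(\mathcal{G})$, and the identity $\sum_{g\in\mathcal{G}_{x}^{x}} f(g)\varphi(u_{\Phi(g)}) = \sum_{b\in F}\varphi(u_{b})\sum_{g\in\mathcal{G}_{x}^{x}}(1_{\Phi^{-1}(\{b\})}f)(g)$ exhibits the field applied to $f$ as a finite combination of the maps $x\mapsto\sum_{g\in\mathcal{G}_{x}^{x}}h(g)$, $h\in C_{c}(\mathcal{G})$, which are Borel by covering $\mathcal{G}$ with small bisections as in Lemma~\ref{l31}, exactly in the spirit of the proof of Theorem~\ref{tmain}. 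With \eqref{eb3} automatic, Theorem~\ref{tNW} yields a $\beta$-KMS weight $\omega_{\varphi}$ whose value on $C_{c}(\mathcal{G})$ is \eqref{eweights} (integration over $X(B)$ in place of $\mathcal{G}^{(0)}$ being legitimate as $X(B)^{C}$ is $\mu$-null); evaluating on $f\in C_{c}(\mathcal{G}^{(0)})$ shows $\omega_{\varphi}$ restricts to $\mu$.

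Affineness follows from the linearity of \eqref{eweights} in $\varphi$ together with the uniqueness clause of Theorem~\ref{tNW}: a convex combination of the $\omega_{\varphi_{i}}$ is again a $\beta$-KMS weight agreeing with $\omega_{t\varphi_{1}+(1-t)\varphi_{2}}$ on $C_{c}(\mathcal{G})$, hence equal to it. For surjectivity, any $\beta$-KMS weight restricting to $\mu$ corresponds under Theorem~\ref{tNW} to a $\mu$-measurable field satisfying \eqref{eb2}, which is precisely \eqref{esikkert}, so Theorem~\ref{tmain}\eqref{a3} supplies a state $\varphi$ on $C^{*}(B)$ with $\varphi\circ\Phi_{x}=\varphi_{x}$ for $\mu$-a.e.\ $x\in X(B)$, whence $\omega_{\varphi}$ equals the given weight by the uniqueness in Theorem~\ref{tNW}. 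For injectivity, if $\omega_{\varphi_{1}}=\omega_{\varphi_{2}}$ then bijectivity of Neshveyev's correspondence forces $\varphi_{1}\circ\Phi_{x}=\varphi_{2}\circ\Phi_{x}$ for $\mu$-a.e.\ $x\in X(B)$; since $\mu(X(B))>0$ I can fix one such $x$, and surjectivity of the isomorphism $\Phi_{x}$ gives $\varphi_{1}=\varphi_{2}$.

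The genuine content resides in the two invoked results, and above all in Theorem~\ref{tmain}\eqref{a3}, where ergodicity of $\mu$ is what collapses an a priori $x$-dependent field into a single state on $C^{*}(B)$; the remainder is bookkeeping through these two correspondences. The one step that truly uses the standing hypotheses rather than formal manipulation is the $\mu$-measurability of the constructed field, which rests on the discreteness of $A$ making $\Phi^{-1}(\{b\})$ clopen and $\Phi(\supp f)$ finite.
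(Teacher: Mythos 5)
Your proposal is correct and follows essentially the same route as the paper: define the field $\varphi_{x}=\varphi\circ\Phi_{x}$ on $X(B)$ (with $\operatorname{Tr}_{x}$ off it), verify the hypotheses of Theorem~\ref{tNW} via the decomposition of $f\in C_{c}(\mathcal{G})$ into pieces supported on $\Phi^{-1}(\{a\})$, obtain surjectivity from Theorem~\ref{tmain}\,(3), injectivity from $\Phi_{x}$ being an isomorphism, and affineness from the uniqueness clause of Theorem~\ref{tNW}. Your write-up is somewhat more explicit than the paper's (notably on condition (2) and on why (3) is vacuous for $\beta\neq 0$), but the argument is the same.
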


\begin{proof}
If $\varphi$ is a state on $C^{*}(B)$ we define a field of states by
\begin{equation*}
\varphi_{x}=
\begin{cases}
\varphi \circ \Phi_{x} & \text{ for } x\in X(B) \ , \\
Tr_{x} & \text{ for } x\notin X(B).
\end{cases}
\end{equation*}
Since any $f\in C_{c}(\mathcal{G})$ is a sum of functions supported on sets $\Phi^{-1}(\{a\})$ with $a\in A$ it is straightforward to check that this is a $\mu$-measurable field of states satisfying the conditions in
Theorem~\ref{tNW}. This way we get a map that is surjective by~\eqref{a3} in Theorem~\ref{tmain}. For injectivity notice that if
$\omega_{\varphi}=\omega_{\psi}$ for some states $\varphi, \psi$ on
$C^{*}(B)$ then $\varphi \circ \Phi_{x}=\psi\circ \Phi_{x}$ for
$\mu$-almost all $x \in X(B)$, so since $\Phi_{x}$ is an isomorphism
this implies that $\varphi=\psi$. Combining \eqref{eweights} and the uniqueness statement in Theorem \ref{tNW} we see that  $\omega_{t\varphi+(1-t)\phi}=t\omega_{\varphi}+(1-t)\omega_{\phi}$, and hence the map is affine.
\end{proof}

 When $B$ is a subgroup of a discrete abelian group $A$ as in Theorem \ref{tweights} then
  $C^{*}(B)\simeq C(\widehat{B})$, so the state-space of $C^{*}(B)$ is
  homeomorphic to the space of regular Borel probability measures on
  the Pontryagin dual $\widehat{B}$ of $B$. Notice that
  Theorem~\ref{tweights} also gives a description of the proper
  tracial weights on $C^{*}(\mathcal{G})$ by taking the groupoid
  homomorphism $c$ to be the zero function and $\beta \neq 0$. Using the above two theorems we can now give a very elegant description of the extremal KMS weights.
  
  \begin{cor}\label{cortw}
  In the setting of Theorem~\ref{tweights} there is a bijection
  between non-zero extremal $\beta$-KMS weights for $\alpha^{c}$ and pairs $(\mu, \xi)$ consisting of an ergodic
  measure $\mu\in \Delta(e^{-\beta c})\setminus\{0\}$ and a character
  $\xi\in \widehat{B}$ where $B\subseteq A$ is the subgroup
  corresponding to $\mu$ via Theorem~\ref{tmain}.
\end{cor}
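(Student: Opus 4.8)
The plan is to combine Theorem~\ref{tweights}, which for a fixed ergodic $\mu\in\Delta(e^{-\beta c})\setminus\{0\}$ gives an affine bijection from the state space of $C^{*}(B)$ onto the $\beta$-KMS weights restricting to $\mu$ on $C_{c}(\mathcal{G}^{(0)})$, with the extremality criterion for measures in Theorem~\ref{t35}. The key identification is that, since $B$ is discrete abelian, $C^{*}(B)\cong C(\widehat{B})$, so the \emph{pure} states on $C^{*}(B)$ are exactly the point evaluations on $\widehat{B}$, i.e. the characters $\xi\in\widehat{B}$ acting by $\xi(u_{b})=\xi(b)$. Thus the candidate bijection sends a pair $(\mu,\xi)$ to the weight $\omega_{\xi}$ of \eqref{eweights}, and the content of the corollary is that extremality of $\omega_{\xi}$ matches ergodicity of $\mu$ together with purity of $\xi$.

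For the forward direction, suppose $\psi$ is a non-zero extremal $\beta$-KMS weight with Neshveyev pair $(\mu,\{\varphi_{x}\})$ from Theorem~\ref{tNW}. First I would show $\mu$ is ergodic: if not, Theorem~\ref{t35} produces an invariant Borel set $Y$ with $\mu(Y)>0$ and $\mu(Y^{C})>0$, and the restrictions $\mu|_{Y},\mu|_{Y^{C}}$ both lie in $\Delta(e^{-\beta c})$. Pairing each with the same field $\{\varphi_{x}\}$ (conditions \eqref{eb1}--\eqref{eb3} pass to the absolutely continuous restrictions) yields KMS weights $\psi_{1},\psi_{2}$ whose sum agrees with $\psi$ on $C_{c}(\mathcal{G})$, hence $\psi=\psi_{1}+\psi_{2}$ by the uniqueness in Theorem~\ref{tNW}; since neither $\mu|_{Y}$ nor $\mu|_{Y^{C}}$ is a scalar multiple of $\mu$, neither $\psi_{i}$ is a scalar multiple of $\psi$, contradicting extremality. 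With $\mu$ ergodic, Theorem~\ref{tmain} gives the subgroup $B$ and a single state $\varphi$ on $C^{*}(B)$ with $\varphi\circ\Phi_{x}=\varphi_{x}$ for $\mu$-a.e. $x\in X(B)$, so $\psi=\omega_{\varphi}$. Finally, if $\varphi=\tfrac12(\varphi'+\varphi'')$ with $\varphi'\neq\varphi''$, then by affineness $\omega_{\varphi}=\tfrac12(\omega_{\varphi'}+\omega_{\varphi''})$, and since both summands restrict to $\tfrac12\mu$ on $C_{c}(\mathcal{G}^{(0)})$ they cannot be scalar multiples of $\omega_{\varphi}$ unless $\varphi'=\varphi''=\varphi$ (using injectivity in Theorem~\ref{tweights}); hence $\varphi$ is pure, i.e. a character $\xi\in\widehat{B}$.

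For the converse I would start from a character $\xi\in\widehat{B}$ and an ergodic $\mu$, and show $\omega_{\xi}$ is extremal. Given a decomposition $\omega_{\xi}=\rho+\sigma$ into $\beta$-KMS weights, restricting to $C_{c}(\mathcal{G}^{(0)})$ decomposes $\mu=\mu_{\rho}+\mu_{\sigma}$ in $\Delta(e^{-\beta c})$; ergodicity of $\mu$ and Theorem~\ref{t35} force $\mu_{\rho}=a\mu$ and $\mu_{\sigma}=b\mu$ with $a+b=1$. Since $a\mu$ is again ergodic with the same associated subgroup $B$, Theorem~\ref{tmain} applies to the Neshveyev field of $\rho$ and, via uniqueness on $C_{c}(\mathcal{G})$, gives a state $\varphi^{\rho}$ on $C^{*}(B)$ with $\rho=a\,\omega_{\varphi^{\rho}}$, and likewise $\sigma=b\,\omega_{\varphi^{\sigma}}$. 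Affineness and injectivity of the map in Theorem~\ref{tweights} then yield $\xi=a\varphi^{\rho}+b\varphi^{\sigma}$, and purity of $\xi$ forces $\varphi^{\rho}=\varphi^{\sigma}=\xi$, so $\rho=a\omega_{\xi}$ and $\sigma=b\omega_{\xi}$ are scalar multiples of $\omega_{\xi}$ (the cases $a=0$ or $b=0$ being trivial since the zero measure gives the zero weight).

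It remains to assemble the bijection: the assignment $(\mu,\xi)\mapsto\omega_{\xi}$ is well defined and non-zero, injective because $\mu$ is recovered as the restriction to $C_{c}(\mathcal{G}^{(0)})$ and then $\xi$ by injectivity in Theorem~\ref{tweights}, and surjective onto the non-zero extremal weights by the forward direction. The step I expect to be most delicate is the converse, where an \emph{a priori} arbitrary decomposition $\omega_{\xi}=\rho+\sigma$ must be shown to respect the whole structure: one has to pass from the scalar splitting of the measures to a genuine convex splitting of $\xi$ on $C^{*}(B)$, which is exactly where applying Theorem~\ref{tmain} to the rescaled ergodic measures $a\mu$, $b\mu$ and invoking the affineness and injectivity of Theorem~\ref{tweights} is essential.
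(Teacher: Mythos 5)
Your proof is correct and follows essentially the same route as the paper: the paper's (much terser) argument likewise reduces extremality of $\psi$ to ergodicity of the associated measure via Theorem~\ref{t35} together with extremality within the fibre of weights restricting to $\mu$, which Theorem~\ref{tweights} identifies with the state space of $C^{*}(B)$ whose extreme points are the characters $\xi\in\widehat{B}$. Your write-up simply supplies the details (splitting along an invariant set, passing the field of states to the restricted measures, and using affineness and injectivity of the map in Theorem~\ref{tweights}) that the paper leaves implicit.
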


\begin{proof}
A $\beta$-KMS weight $\psi$ with associated measure $\mu\in \Delta(e^{-\beta c})$ is extremal if and only if $\mu$ is extremal in $\Delta(e^{-\beta c})$ and $\psi$ is extremal in the convex set of $\beta$-KMS weights that restrict to $\mu$ on $C_{c}(\mathcal{G}^{(0)})$. The Corollary therefore follows from Theorem \ref{t35}.
\end{proof}

\subsection{The GNS representations of extremal KMS weights}
It follows from Corollary \ref{cortw} that to describe the extremal KMS weights for diagonal actions on \'etale groupoid $C^{*}$-algebras arising from groupoids injectively graded by abelian groups, it suffices to describe the quasi-invariant measures and their support. It turns out, that in this setting, the factor type of the extremal KMS weights only depends on the quasi-invariant measure associated to the KMS weight.

\begin{thm}\label{gns}
Let $\mathcal{G}$ be an \'etale groupoid injectively graded by a discrete countable abelian group $A$ via a map $\Phi: \mathcal{G} \to A$. Let $\beta \in \mathbb{R}\setminus \{0 \}$, let  $c: \mathcal{G} \to \mathbb{R}$ be a continuous groupoid homomorphism and let
  $\mu\in \Delta(e^{-\beta c}) \setminus \{0\}$ be ergodic. 

The von Neumann algebras generated by the GNS representations of the extremal KMS weights that restricts to $\mu$ on $C_{c}(\mathcal{G}^{(0)})$ are all isomorphic.
\end{thm}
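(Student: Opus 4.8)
The plan is to show that any two extremal $\beta$-KMS weights restricting to $\mu$ differ by an automorphism of $C^{*}(\mathcal{G})$; since weights related by an automorphism have unitarily equivalent GNS representations, the generated von Neumann algebras are then isomorphic. By Corollary~\ref{cortw} and Theorem~\ref{tweights} the extremal $\beta$-KMS weights restricting to $\mu$ on $C_{c}(\mathcal{G}^{(0)})$ are precisely the weights $\omega_{\xi}$ attached to the characters $\xi \in \widehat{B}$, i.e. the pure states of $C^{*}(B) \simeq C(\widehat{B})$, so that
\[
\omega_{\xi}(f) = \int_{X(B)} \sum_{g\in \mathcal{G}_{x}^{x}} f(g)\, \xi(\Phi(g)) \ d\mu(x), \qquad f\in C_{c}(\mathcal{G}).
\]
The symmetry relating them is the dual action of $\widehat{A}$: by \eqref{e1par} the grading $\Phi:\mathcal{G}\to A$ induces for every $\chi\in\widehat{A}$ an automorphism $\gamma_{\chi}\in \Aut(C^{*}(\mathcal{G}))$ determined on $C_{c}(\mathcal{G})$ by $\gamma_{\chi}(f)(g)=\chi(\Phi(g))f(g)$.

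I would first record three elementary facts about $\gamma_{\chi}$, all verified on $C_{c}(\mathcal{G})$: it commutes with $\alpha^{c}_{t}$ for all $t$, since both act by pointwise multiplication, so $\omega_{\xi}\circ\gamma_{\chi}$ is again a $\beta$-KMS weight for $\alpha^{c}$; it fixes $C_{c}(\mathcal{G}^{(0)})$ pointwise, because $\Phi$ is trivial on units, so $\omega_{\xi}\circ\gamma_{\chi}$ still restricts to $\mu$; and, using $\Phi(g)\in B$ for $g\in\mathcal{G}_{x}^{x}$ with $x\in X(B)$,
\[
\omega_{\xi}(\gamma_{\chi}(f)) = \int_{X(B)} \sum_{g\in \mathcal{G}_{x}^{x}} f(g)\, (\chi|_{B}\cdot\xi)(\Phi(g)) \ d\mu(x) = \omega_{(\chi|_{B})\xi}(f)
\]
for all $f\in C_{c}(\mathcal{G})$. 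By the uniqueness statement in Theorem~\ref{tNW} this forces the equality of weights $\omega_{\xi}\circ\gamma_{\chi}=\omega_{(\chi|_{B})\xi}$ on $C^{*}(\mathcal{G})$.

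Now, given $\xi_{1},\xi_{2}\in\widehat{B}$, the character $\xi_{2}\xi_{1}^{-1}$ of the subgroup $B\subseteq A$ extends to some $\chi\in\widehat{A}$, because the restriction map $\widehat{A}\to\widehat{B}$ is surjective (equivalently, $\mathbb{T}$ is divisible, so characters of a subgroup extend). Then $\omega_{\xi_{1}}\circ\gamma_{\chi}=\omega_{(\chi|_{B})\xi_{1}}=\omega_{\xi_{2}}$, so the two extremal weights are intertwined by the automorphism $\gamma_{\chi}$. To conclude I would observe that for any proper weight $\omega$ and any $\gamma\in\Aut(C^{*}(\mathcal{G}))$ the triple $(H_{\omega},\pi_{\omega}\circ\gamma,\Lambda_{\omega}\circ\gamma)$ is a GNS construction for $\omega\circ\gamma$; by uniqueness of the GNS construction $\pi_{\omega\circ\gamma}$ is unitarily equivalent to $\pi_{\omega}\circ\gamma$, and since $\gamma(C^{*}(\mathcal{G}))=C^{*}(\mathcal{G})$ this yields $\pi_{\omega\circ\gamma}(C^{*}(\mathcal{G}))''\cong\pi_{\omega}(C^{*}(\mathcal{G}))''$. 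Applying this with $\omega=\omega_{\xi_{1}}$ and $\gamma=\gamma_{\chi}$ gives the claimed isomorphism.

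The main obstacle is conceptual rather than computational: recognizing that the dual (gauge) action $\gamma$ of $\widehat{A}$ arising from the grading is exactly the symmetry that permutes the extremal weights, and then pinning down both the intertwining identity $\omega_{\xi}\circ\gamma_{\chi}=\omega_{(\chi|_{B})\xi}$ and the surjectivity of $\widehat{A}\to\widehat{B}$. Once these are in place, the transfer to von Neumann algebras through uniqueness of the GNS construction is routine.
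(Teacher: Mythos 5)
Your proposal is correct and follows essentially the same route as the paper's own proof: identify the extremal weights over $\mu$ with characters of $\widehat{B}$ via Corollary~\ref{cortw}, extend $\xi_{1}\xi_{2}^{-1}$ to a character of $A$, use the dual action $\gamma$ from \eqref{e1par} together with the uniqueness statement in Theorem~\ref{tNW} to get $\omega_{\xi_{2}}\circ\gamma_{\chi}=\omega_{\xi_{1}}$, and transfer through the GNS construction. Your write-up is in fact slightly more explicit than the paper's on two points it leaves implicit, namely the surjectivity of the restriction map $\widehat{A}\to\widehat{B}$ and the unitary-equivalence step for GNS triples of weights related by an automorphism.
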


\begin{proof}
It follows from Corollary \ref{cortw} that when $B\subseteq A$ is the abelian group associated to the ergodic measure $\mu\in \Delta(e^{-\beta c}) \setminus \{0\}$ then all extremal $\beta$-KMS weights for $\alpha^{c}$ that restricts to $\mu$ on $C_{c}(\mathcal{G}^{(0)})$ are given as pairs $(\mu, \xi)$ with $\xi\in \widehat{B}$. Let $\xi_{1}, \xi_{2} \in \widehat{B}$ and denote by $\psi_{1}$ and $\psi_{2}$ the $\beta$-KMS weights corresponding to respectively $(\mu, \xi_{1})$ and $(\mu, \xi_{2})$. By possibly extending the character, c.f. Theorem 2.1.4 in \cite{Rudin}, we can assume that $\xi_{1}\xi_{2}^{-1}\in \widehat{A}$. Letting $\gamma$ denote the action defined via $\Phi$ in \eqref{e1par}, then for any $f\in C_{c}(\mathcal{G})$ we have
\begin{equation*}
\psi_{2}(\gamma_{\xi_{1}\xi_{2}^{-1}}(f))=\int_{X(B)} \sum_{g\in \mathcal{G}_{x}^{x}}  (\xi_{1}\xi_{2}^{-1})(\Phi(g))f(g)\xi_{2}(\Phi(g))\ d\mu(x) =\psi_{1}(f).
\end{equation*}
Since $\gamma$ and $\alpha^{c}$ commute it is straightforward to check that $\psi_{2}\circ \gamma_{\xi_{1}\xi_{2}^{-1}}$ is a $\beta$-KMS weight for $\alpha^{c}$, and hence $\psi_{2}\circ \gamma_{\xi_{1}\xi_{2}^{-1}}=\psi_{1}$ by the uniqueness statement in Theorem \ref{tNW}. If $(H, \pi, \Lambda)$ denotes the GNS-triple for $\psi_{2}$ this implies that $(H, \pi\circ \gamma_{\xi_{1}\xi_{2}^{-1}}, \Lambda\circ \gamma_{\xi_{1}\xi_{2}^{-1}})$ is a GNS triple for $\psi_{1}$, proving the theorem.

\end{proof}

\end{document}